\newcommand{\R}{{\mathbb R}}
\newcommand{\N}{{\mathbb N}}
\newcommand{\one}{{\mathds{1}}}
\newcommand{\cP}{{\mathcal P}}
\newcommand{\cM}{{\mathcal M}}
\newcommand{\cN}{{\mathcal N}}
\newcommand{\ds}{\displaystyle}
 \definecolor{mypurple}{RGB}{140,0,255}
\definecolor{myred}{rgb}{255,0,0}
\definecolor{mydarkturquoise}{RGB}{0,206,209}
\definecolor{mydeeppink}{RGB}{255,20,147}
\definecolor{darkblue}{RGB}{0,0,140}
\definecolor{blue2}{RGB}{0,0,0}
\definecolor{middleblue}{RGB}{0,0,71}
\definecolor{light-gray}{gray}{0.9}
\definecolor{ProcessBlue}{cmyk}{1,0,0,0.40}
\definecolor{Black}{cmyk}{0,0,0,1}
\definecolor{Red}{cmyk}{0,1,1,0.2}
\definecolor{Green}{cmyk}{0.9,0,1,0}
\definecolor{Orange}{cmyk}{0,0.61,0.87,0.5}
\definecolor{Fuchsia}{cmyk}{0.47,0.91,0,0.06}
\definecolor{PineGreen}{cmyk}{0.92,0,0.59,0.30}
\numberwithin{equation}{section}
 \newtheorem{remark}{Remark}[section]
 \newtheorem{assumption}{Assumption}[section]
\begin{document}

\title{State constrained optimal control problems with control on the acceleration.\\ Applications to kinetic mean field games}
\author{
  Yves Achdou \thanks { Universit{\'e}  Paris Cit{\'e} and  Sorbonne Universit{\'e}, CNRS, Laboratoire Jacques-Louis Lions, (LJLL), F-75006 Paris, France, achdou@ljll-univ-paris-diderot.fr}}

\maketitle

\begin{abstract}
   Relying on the careful study of a related problem in the calculus of variations,
   we study  a class of optimal control problems in which the control lies on the acceleration, with state constraints on the position variable. In dimension one, we find  explicit formulas in the special case when the running cost is a power of the acceleration (in absolute value) and the terminal cost is zero.
   For more general costs or/and  higher dimensions, we study
   the singularities of the value function. We also prove the closedness (in the $C^1$ topology) of the graph of the multivalued mapping which maps a point in the state space  to the set of optimal trajectories which start from  this point. A consequence of the latter is the existence,  under general assumptions,  of relaxed equilibria for a class of kinetic mean field games with state constraints.
 \end{abstract}
 
\begin{keywords}
  deterministic optimal control, double integrator, state constraints, kinetic mean field games, Lagrangian formulation
\end{keywords}

\begin{AMS}
   49N80, 91A16, 49J53 
\end{AMS}

\section{Introduction}
\label{sec:intro}

The main purpose of  the present paper, which is the continuation of the  article \cite{MR4444572},
is
the study of deterministic  optimal control problems  in which  the agents control their acceleration and must stay in  $\overline \Omega$, where $\Omega$ is an open  subset of $\R^n$ with a sufficiently smooth boundary. The state variable is the pair $(x,v)$ of position and velocity, 
and the state space is of the form $\Xi= \overline \Omega\times \R^n$. The control variable, i.e. the acceleration takes its values in the full space $\R ^n$.

We  assume that the cost of a trajectory $\xi$  is of the form
\begin{displaymath}
  J_t(\xi)=
  \frac 1 q \int_t^T \left|\frac {d^2\xi} { dt^2}(s)\right|^q ds + \int_t^T \ell \left(\xi(s), \frac {d\xi} { dt}(s),s\right) ds + g\left(\xi(T), \frac {d\xi} { dt}(T)\right),
\end{displaymath}
where $\ell$ and $g$ are bounded and continuous functions and $q$ is a real number greater than $1$. The value function is $u(t,x,v)=\inf J_t(\xi)$, where the infimum is taken on the trajectories which belong to $W^{2,q}(t,T)$ and such that $\xi(t)=x$, $ \frac {d\xi} { dt}(t)=v$ and  $\xi(s)\in \overline \Omega$ for all times $s\in [t,T]$.

The value function  $u$  blows up at every $(x,v)$ such that $x\in \partial \Omega$ and $v$ points outward $\Omega$, and the function $(x,v) \mapsto u(t, x,v)$ is discontinuous at every $(x,v)$ such that  $x$ belongs to $ \partial \Omega$ and $v$ is tangent to $\partial \Omega$ at $x$. This phenomenon can be understood as follows: because the agents may have to brake hard to avoid exiting the domain,  the value function
blows up at those points $(x,v)$ where $x\in \partial \Omega$ and $v$ points outward $\Omega$. A large part of the present paper is devoted to getting accurate information on the singularities of $u$ at the boundary of $\Omega$. Although our setting is deterministic, this objective may be reminiscent of \cite{MR990591}, in which the authors studied the singularities arising in state constrained  optimal stochastic control problems.

Here, it is not easy to rely on PDE based methods for studying the value function. More precisely,  the controllability conditions at the boundary proposed  in  the seminal articles \cite{MR838056, MR951880} do not hold, and  it is not possible to characterize the value function as the continuous viscosity solution of a  Hamilton-Jacobi equation in the interior of the state space that furthermore satisfies  a supersolution condition at the boundary. One cannot either rely on the results contained in   e.g. \cite{MR1200233, MR1794772} on lower semi-continuous viscosity solutions of Hamilton-Jacobi equations with state constraints, because 
the  controllability assumptions made there  (that are different from those in  \cite{MR838056, MR951880}) are not satisfied and the running cost is not bounded.
Without any controllability hypothesis, the value function is discontinuous and its characterization with PDEs becomes more difficult, see \cite{MR1672758,MR2859862}. In the latter two articles, assumptions are made so that the value function is bounded,
and  to the best of our knowledge, these articles have not been generalized to include the
problem studied presently. None of the latter references contains the information on the singularities that we are interested in. This is why we will mainly use techniques from the calculus of variations and explicit solutions when they exist.

As already said, this paper is a continuation of  \cite{MR4444572},  which was mainly devoted to  deterministic mean field games (MFGs in short) with control on the acceleration and state constraints, or in other words, {\sl kinetic} MFGs with state constraints (we will soon give comments on this terminology). In \cite{MR4444572}, an important step was to study the related optimal control problems. The present paper contains  improvements of the results contained in \cite{MR4444572}, both on optimal control and on MFGs. 

More precisely,   \cite{MR4444572} was devoted to relaxed equilibria that are described by a probability measure on trajectories. Such a notion proves useful when it is difficult to characterize the equilibria with systems of PDEs as it was done in the  pioneering works
\cite{MR2269875,MR2271747,MR2295621} of  Lasry and Lions. The difficulties in using PDEs
often arise from boundary conditions and concentrations of the density of states near  some points at the boundary.
Relaxed equilibria for MFGs  with state constraints were studied in \cite{MR3888967} under strong controllability assumptions, following ideas contained in \cite{Br89, Br93,MR3556062}. 

On the other hand, the same  deterministic  MFGs, this time   
in the absence of state constraints, and the related systems of PDEs, were investigated in  \cite{MR4102464}, see also \cite{MR4132067}. Assuming that the control of the agents lies on the acceleration 
is  natural in the context of traffic flows. Second order dynamics are also relevant in economics:  for example, the percentage of time spent by an agent in education can be seen as a control variable  which acts on the  second time derivative of her wealth, see \cite{LUCAS19883}.
In \cite{MR4835150}, the authors investigated numerical methods that can in particular be applied  to the problem studied in \cite{MR4102464}.
  MFGs similar to the ones considered here but with a special coupling cost and without state constraints  were addressed in \cite{MR4199409} in relation with flocking models, and it would be  interesting  to generalize the results  in \cite{MR4199409} by incorporating state constraints.  Finally, the same kind of MFGs, this time  with  local  coupling costs or with costs that do not depend separately on the control and on the  distribution of states, were addressed in \cite{MR4492922,MR4920379}. Besides, the denomination {\sl kinetic mean field games} was introduced in the latter references, by analogy with kinetic models in fluid mechanics in which the velocity is also a state variable.

  Different and very reasonable models suppose  that the acceleration takes its values in a compact subset or $\R^n$. We have not made this assumption here, because we have in mind situations in which accurate information on the space of controls is not available (this may happen in economics for example).
 We hope that the cases in which we provide closed formula or accurate estimates will become useful model problems (for further theory or for numerical methods).

The present paper contains new results and also improvements of the results contained in \cite{MR4444572}:
\begin{description}[style=unboxed,leftmargin=0cm]
\item[1] For $n=1$ and  $\overline \Omega=[-1,1]$:
\begin{description}
\item[(a)] When  $\ell=0$ and $g=0$, we obtain an explicit formula for the value function, by using arguments from the calculus of variations, see Theorem \ref{val_func_char} and Corollary \ref{val_func_char_2} below. We were not able to find this formula in the  literature.
\item[(b)] For  general $\ell$ and $g$, we study the behaviour of the value function near the points $(x,v)$, $x=\pm 1$ and $v=0$, at which the value function is singular
\item[(c)] We improve the results on the closedness (with respect to the $C^1$ topology) of the graph of the multivalued map which maps a point $(x,v)$ of the state space to the set of the  optimal trajectories with initial conditions $(x,v)$. The improvements lie in two aspects: a) we get rid of an assumption that was made in  \cite[Lemma 4.6]{MR4444572} on $\ell$, namely that $\ell$ did not reward the trajectories that exit the domain. b) the new results hold for any $q>1$ while $q=2$ was assumed  in \cite[Lemma 4.6]{MR4444572}.
\item[(d)] The new closed graph property allows us to improve the theorems on the existence of relaxed equilibria for MFGs  that are contained  in \cite{MR4444572}. The present results hold under minimal hypothesis.
\end{description}

\item[2.] For $n>1$,  $q=2$ and $\Omega $ a convex and bounded open set with a $C^3$ boundary:
  \begin{description}
  \item[(a)] We obtain a general closed graph property similar to the one mentioned in point 1.(c) above, still relying on the explicit formulas obtained in dimension one. In  \cite{MR4444572}, the closed graph property was obtained only for a class of compact subsets of $\overline \Omega\times \R^n$ satisfying a special assumption
  \item[(b)] As a consequence of  this latter result, we obtain the  existence of relaxed equilibria for MFGs.
  \end{description}
  In dimension $n>1$, we restrict ourselves to  $q=2$ because some intermediate results rely on a sharp estimate that is still lacking for $q\not=2$.
\end{description}
All our results rely on  the careful study of an auxiliary problem in the calculus of variations. The latter  had already been addressed in  \cite{MR4444572} but more rapidly and only for $q=2$, and it was not the cornerstone of  \cite{MR4444572} as it is of  the present paper.

The paper is organized as follows.
In the remainder of Section \ref{sec:intro}, we describe the class of optimal control problems and review some results that where obtained in \cite{MR4444572} and will be useful in the present paper. 
Section \ref{sec:auxil_opt_cont_pb} is devoted to finding the explicit solution of the optimal control problem  mentioned above when $n=1$, $\Omega=(-\infty, 0)$  or $\Omega=(-1,1)$, and both $\ell$ and $g$ are identically $0$. The construction relies on the study  in Subsection \ref{sec:auxil_pb} of an auxiliary problem in the calculus of variations. The results found in Subsection \ref{sec:auxil_pb} will be key in all the remainder of the paper.
 Since their proofs are rather technical and may be skipped on first reading, they will be written in appendix.
The explicit formulas are given in Subsection \ref{sec:auxil_opt_cont_pb_2}. The same optimal control problem  but with general $g$ and $\ell$ is tackled in Section \ref{sec:sc_pb}, and in particular, the  closed graph properties mentioned above are proved in Subsection \ref{closed_graph_1}. 
Section \ref{sec:multid} is devoted to the quadratic case ($q=2$) when $\Omega$ is a bounded and convex domain in $\R^n$ with a $C^3$ boundary. There, the main result concerns 
the closed graph properties mentioned above.
Finally, in Section \ref{sec:mfg}, relying on the above-mentioned properties and the arguments contained in \cite{MR4444572}, we obtain the existence of relaxed equilibria for related MFGs with state constraints, under general hypothesis.

\subsection{Description of the optimal control problem and known facts}
\label{sec:setting}
$\;$
\paragraph{\bf Notation}

 Given two real numbers $a$ and $\gamma>0$, we define $ a_+^\gamma$ by
  \begin{displaymath}
    a_+^\gamma=(\max(a,0))^\gamma.
  \end{displaymath}

\paragraph{\bf Setting and assumptions}
Let  $\Omega$ be a  bounded and convex domain of $\R^n$  with a boundary $\partial \Omega$ of class $C^3$. For $x\in \partial \Omega$, let $n(x)$ be the unitary vector normal to $\partial \Omega$  pointing outward $\Omega$. We will use the signed distance to  $\partial \Omega$, $d: \R^n\to \R$,
\begin{displaymath}
 d(x)=\left\{
    \begin{array}[c]{rcl}
      \min_{y\in \partial \Omega} |x-y|,\quad&\hbox{if}&\quad x\notin \Omega,\\
      -\min_{y\in \partial \Omega} |x-y|,\quad&\hbox{if}&\quad x\in \Omega.
    \end{array}
\right.
\end{displaymath}
Since $\partial \Omega$ is $C^3$, the function $d$ is $C^3$ in a neighborhood of  $\partial \Omega$, see \cite{MR1814364}, and for all $x\in \partial \Omega$, $\nabla d(x)= n(x)$.

The state space is  $\Xi=\overline \Omega \times \R^n$.
Given a time horizon $T>0$,
the set of admissible trajectories after time $t$, $t\in [0,T)$, is
\begin{equation}
\label{eq:1}
\Gamma_t=\left\{
    \xi\in C^1([t,T];\R^n) : \left|
\begin{array}[c]{l}   \ds \xi'\in AC([t,T];\R^n) \\  \ds  \xi(s)\in \overline \Omega,  \forall s\in [t,T] 
\end{array}\right.
\right\}.
\end{equation}
Let us set $\Gamma=\Gamma_0$;   $\Gamma$ is a metric space with the distance $d(  \xi, \tilde\xi)= \|\xi-\tilde\xi\|_{  C^1([0,T];\R^n)}$.
For any $(x,v)\in \Xi$, and $t\in [0,T]$, we also define
\begin{equation}
\label{eq:3}
\Gamma_t[x,v]=\{ (\xi, \eta) \in \Gamma_t:\,\xi(t)=x,\, \xi'(t)=v\},
\end{equation}
and set $\Gamma[x,v]=\Gamma_0[x,v]$.
The set  $\Gamma_t[x,v]$ is non-empty if and only if $(x,v)\in \Xi^{\rm{ad}}$ where
  \begin{equation}
    \label{eq:5}
\Xi^{\rm{ad}}=\{  (x,v):  x\in \overline \Omega, \;  v\cdot n(x)\le 0 \hbox{ if } x\in \partial \Omega
 \}\subset \Xi.
\end{equation}
The optimal control problem studied in the present paper involves a running cost $\ell: \Xi\times [0,T]\to \R$ and  a terminal cost $g: \Xi\to \R$.
\begin{assumption}
  \label{sec:optim-contr-probl-2}
  The running cost $\ell: \Xi\times [0,T]\to \R$  and the terminal cost $g: \Xi\to \R$ are 
  bounded and continuous functions.  Set $M= \|g\|_{L^\infty ( \Xi) }  +   \|\ell\|_{L^\infty( \Xi \times [0,T]) } $.
\end{assumption}
Given $q>1$,  the cost associated to an admissible trajectory $\xi\in \Gamma_t$ is
\begin{equation}
  \label{eq:costJ}
  J_t(\xi)=\left\{
    \begin{array}[c]{rl}
   \ds   \int_t^T \left(\ell(\xi(s),\xi'(s) ,s) +\frac 1 q   |\xi''(s)|^q
      \right) ds+g(\xi(T), \xi'(T))   &\hbox{ if } \xi\in W^{2,q}(t,T), \\
      +\infty    &\hbox{ otherwise. }
    \end{array}
\right.
\end{equation}
We set $J=J_0$. For $t\in [0,T]$ and $(x,v)\in  \Xi^{\rm{ad}}$, the value of the control problem is
\begin{equation}
\label{eq:6}
u(t,x,v) = \inf_{\xi \in \Gamma_t[x,v]} J_t(\xi).
\end{equation}
Let $\Gamma_t^{\rm{opt}}[x,v]$ be the set of all
$ \xi\in\Gamma_t[x,v]\cap  W^{2,q} (0,T)$ which achieve  $u(t,x,v)=J_t(\xi)$.
  For  $(x,v)\in  \Xi^{\rm{ad}}$, one can prove that  $\Gamma_t^{\rm{opt}}[x,v]$ is non empty   by observing first that the optimal value is finite, see \cite[Lemma 2.1]{{MR4444572}}, then by using the direct method in the  calculus of variations. It is also standard to show  that  $\Gamma_t^{\rm{opt}}[x,v]$ is closed.
For brevity, let us set  $\Gamma^{\rm{opt}}[x, v]= \Gamma_0^{\rm{opt}}[x, v]$.

 With  $p= q/(q-1)$, it is known that the value function $u$ of \eqref{eq:6} solves
\begin{eqnarray}
  \label{eq:1000}
  -\partial_t u-v\cdot D_xu+ \frac 1 p | D_v u|^p= \ell(x,v,t)
  \textrm{ in } (0,T)\times \Omega \times \R^n,\\
   \label{eq:1001}
  u(T, x,v)=g(x,v) 
  ,\textrm{ in }\Omega \times \R^n,
      \end{eqnarray}
  (\eqref{eq:1000} is understood in the sense of viscosity);
  \eqref{eq:1000} and \eqref{eq:1001}  should be  supplemented  with boundary conditions linked to state constraints.
  
 \begin{remark}
    The results in Sections  \ref{sec:sc_pb} and \ref{sec:multid} below hold for costs of the type
    \begin{displaymath}
    \int_t^T \left(\ell(\xi(s),\xi'(s) ,s) + a(\xi(s),\xi'(s), s)  |\xi''(s)|^q
 \right) ds+g(\xi(T), \xi'(T)),      
    \end{displaymath}
where $a$ is continuous and bounded, bounded from below by a constant $\underline{a}>0$.
  \end{remark}

When $n=1$, we will restrict ourselves to $\Omega=(-1,1)$. Then,
  \begin{equation}
    \label{eq:7}
 \Xi=[-1,1]\times \R,\quad \hbox{ and }\quad   \Xi^{\rm{ad}}= [-1,1)\times [0,+\infty) \cup  (-1,1]\times (-\infty,0].
\end{equation}

\paragraph{\bf Known facts}
\label{sec:bounds3}

We place ourselves in the framework defined above.

Lemma \ref{sec:bounds_and_continuity} and Proposition \ref{sec:bounds_opt_traj_nd} that follow have  been proved in \cite{MR4444572}
(only for $q=2$ but the generalization to $q>1$ is easy),
see  \cite[Lemma 2.3]{MR4444572},     \cite[Prop. 2.7]{MR4444572} for $n=1$ and 
\cite[Prop 4.7]{MR4444572} for $n>1$ and their proofs. Lemma  \ref{sec:bounds_and_continuity}  (respectively  Proposition  \ref{sec:bounds_opt_traj_nd})
will be useful to prove Theorem \ref{prop:clos-graph-prop}
(respectively Theorem  \ref{sec:mean-field-games-5})
below.
\begin{lemma}\label{sec:bounds_and_continuity}
  Consider a compact subset $\Theta$ of $\Xi^{\rm ad}$ (see \eqref{eq:5})  which has the following property:  for all sequences  $(x^i, v^i)_{i\in \N}$  with values  $\Theta$ such that  $\ds \lim_{i\to +\infty} (x^i, v^i)=(x, v)\in \Theta$, the following holds: if  $x\in  \partial \Omega$ and $v\cdot n(x)=0$,  then
\begin{equation}
  \label{eq:8}
(v^i \cdot \nabla d (x^i) )_+^{2q-1} = o\left( \left |d(x^i)\right|^{q-1} \right),
\end{equation}
with the convention that if $x^i\in \partial \Omega$ and  $v^i \cdot \nabla d (x^i) =v^i\cdot n(x^i)   \le 0$, then the quotient $(v^i \cdot \nabla d (x^i) )_+^{2q-1} / \left |d(x^i)\right|^{q-1}$ takes the value $0$. Note that $ \nabla d(x^i)$ is meaningful for $i$ large enough, because $d$ is $C^3$ near $\partial \Omega$.

Under Assumption ~\ref{sec:optim-contr-probl-2}, for all $\tau\in (0,T)$, the function $[0,T-\tau]\times \Theta \ni (t,x,v) \mapsto  u(t, x,v)$ ($u$ is defined in \eqref{eq:6}) is continuous and bounded by a constant depending on $\tau$.
\end{lemma}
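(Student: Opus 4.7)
The plan is to prove boundedness first and then continuity, both following the standard pattern for state-constrained optimal control but with careful attention to how condition \eqref{eq:8} enters.

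\textbf{Step 1 (Uniform bound).} I will exhibit, for each $(t,x,v)\in [0,T-\tau]\times\Theta$, an admissible competitor $\xi\in\Gamma_t[x,v]$ with a cost bounded by a constant depending only on $\tau$ and $\Theta$. Since $\ell$ and $g$ are bounded by $M$, it suffices to control $\tfrac1q\int_t^T|\xi''|^q\,ds$. The template is a \emph{brake-then-halt} trajectory: apply a constant acceleration on a short interval to bring the velocity to zero while keeping the trajectory inside $\overline\Omega$, then remain at rest. Away from the singular part of $\partial\Xi^{\rm ad}$ the construction is trivial, since $|v|$ is bounded on the compact $\Theta$ and either $x\in\Omega$ or $v\cdot n(x)<0$. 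The delicate case is a sequence $(x^i,v^i)\in\Theta$ converging to $(x^*,v^*)$ with $x^*\in\partial\Omega$ and $v^*\cdot n(x^*)=0$: one splits $v^i=w^i_n+w^i_t$ (normal/tangential components relative to $\nabla d(x^i)$) and applies a constant outward-opposing acceleration to kill the outward component $w^i_n$ before the trajectory reaches $\partial\Omega$. A direct calculation shows that the $L^q$-cost of this normal brake has order $(v^i\cdot\nabla d(x^i))_+^{2q-1}/|d(x^i)|^{q-1}$, which tends to zero precisely by \eqref{eq:8}. The tangential component is killed on a time scale of order $\tau$ using an acceleration whose cost is uniformly bounded since $|v|$ is bounded on $\Theta$ and $\Omega$ is convex. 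Patching the two phases and extending by zero acceleration produces a competitor with uniformly bounded cost.

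\textbf{Step 2 (Upper semicontinuity).} Given $(t^i,x^i,v^i)\to(t^*,x^*,v^*)$ in $[0,T-\tau]\times\Theta$, fix an optimal trajectory $\bar\xi^*\in\Gamma_{t^*}^{\rm opt}[x^*,v^*]$ provided by the direct method. I will construct a competitor $\xi^i\in\Gamma_{t^i}[x^i,v^i]$ by splicing a short correction segment (built via the brake-then-halt template of Step~1, now reinterpreted as matching prescribed boundary values at both ends through a polynomial or spline interpolation on a short interval) to the shifted tail of $\bar\xi^*$. The correction segment has cost $o(1)$ by \eqref{eq:8} and by continuity of $\bar\xi^*$, while on the remainder the costs $\ell$ and $g$ converge by uniform continuity of $\bar\xi^*$ on $[t^*,T]$. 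This yields $\limsup_i u(t^i,x^i,v^i)\le J_{t^*}(\bar\xi^*)=u(t^*,x^*,v^*)$.

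\textbf{Step 3 (Lower semicontinuity).} Let $\bar\xi^i\in\Gamma_{t^i}^{\rm opt}[x^i,v^i]$. By Step~1 the bound $u(t^i,x^i,v^i)\le C(\tau,\Theta)$ together with the explicit form of $J_{t^i}$ gives a uniform $L^q$-bound on $(\bar\xi^i)''$; since $|v^i|$ is bounded and the time interval has length $\le T$, the family $\bar\xi^i$ is bounded in $W^{2,q}(0,T)$ after extension by the affine continuation on $[0,t^i]$. Compact embedding $W^{2,q}\hookrightarrow C^1$ yields (along a subsequence) convergence to some $\bar\xi$ in $C^1([0,T];\R^n)$, with $(\bar\xi^i)''\rightharpoonup \bar\xi''$ in $L^q$. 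The limit satisfies $\bar\xi(t^*)=x^*$, $\bar\xi'(t^*)=v^*$, and $\bar\xi(s)\in\overline\Omega$ for $s\in[t^*,T]$, hence $\bar\xi\in\Gamma_{t^*}[x^*,v^*]$. Weak lower semicontinuity of the $L^q$-norm and dominated convergence for $\ell$ and $g$ give
\begin{displaymath}
u(t^*,x^*,v^*)\le J_{t^*}(\bar\xi)\le\liminf_i J_{t^i}(\bar\xi^i)=\liminf_i u(t^i,x^i,v^i).
\end{displaymath}
Combining Steps~2 and~3 establishes continuity.

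\textbf{Main obstacle.} The hardest point is the construction of the correction splice in Step~2 when $(x^*,v^*)$ lies at a corner where $u$ is known to be discontinuous on $\Xi^{\rm ad}\setminus\Theta$; it is precisely the scaling in \eqref{eq:8} that turns what would otherwise be an unbounded braking cost into one of order $o(1)$, and this must be quantified carefully to verify that the spliced trajectory is admissible and has the correct limiting cost.
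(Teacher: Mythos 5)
First, note that the paper does not reprove this lemma: it is quoted from \cite[Lemma 2.3, Prop. 2.7, Prop. 4.7]{MR4444572}, so I can only compare your outline with the strategy used there and with the closely related constructions the paper does carry out (e.g.\ the First Step of the proof of Proposition \ref{prop:clos-graph-prop}). Your overall architecture — a competitor with cost of order $(v\cdot\nabla d(x))_+^{2q-1}/|d(x)|^{q-1}$ for the bound, a splice for upper semicontinuity, and weak $W^{2,q}$ compactness plus lower semicontinuity of the $L^q$ norm for the liminf inequality — is the right one, and your Step 3 is essentially complete. The scaling you identify for the normal brake is also the correct one.

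There is, however, a genuine gap in Steps 1 and 2 when $n>1$: your brake is built by freezing the normal direction $\nabla d(x^i)$ at the initial point, killing the outward component with a constant acceleration, and letting the tangential component run ``on a time scale of order $\tau$.'' For a curved boundary this trajectory need not be admissible: if $\Omega$ is convex and $d(x^i)\to 0$, straight tangential motion with speed of order one leaves $\overline\Omega$ after a time of order $\sqrt{|d(x^i)|}$, since $\frac{d^2}{ds^2}\,d(\xi(s))=D^2d(\xi)\xi'\cdot\xi'+\nabla d(\xi)\cdot\xi''$ and the curvature term is positive. So convexity, which you invoke as if it helped, is precisely what breaks the frozen-direction construction; one must either add a centripetal correction of size $O(|v|^2)$ throughout, or (as in \cite{MR4444572} and in the First Step of the proof of Proposition \ref{prop:clos-graph-prop}) work in the boundary-flattening chart of Lemma \ref{def:local_charts}, apply the one-dimensional braking profile to the coordinate $e_n\cdot\Phi_x(\xi)=d(\xi)$, and then estimate the extra cost coming from $D^2\Psi_x$; only then are admissibility and the claimed cost of order $(v^i\cdot\nabla d(x^i))_+^{2q-1}/|d(x^i)|^{q-1}+O(1)$ actually justified. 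The same issue affects your Step 2, where in addition the central claim — that the correction segment joining $(x^i,v^i)$ to the tail of $\bar\xi^*$ is admissible and has cost $o(1)$ at the singular points $x^*\in\partial\Omega$, $v^*\cdot n(x^*)=0$ — is asserted rather than proved: a generic cubic spline between nearby states at the boundary neither stays in $\overline\Omega$ nor has small cost on a short interval, and making this matching work (brake the normal component via the chart, then reconnect in position and velocity while remaining in $\overline\Omega$, using \eqref{eq:8} to kill the braking cost and the one-dimensional formulas of Section \ref{sec:auxil_pb} to quantify it) is exactly the heart of the lemma, as you yourself acknowledge in your ``main obstacle'' remark. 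As it stands, the proposal is a correct plan in dimension one but not a proof in the generality in which the lemma is stated.
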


 \begin{proposition}\label{sec:bounds_opt_traj_nd}
   For positive numbers $r$ and $C$,  let us set
   \begin{eqnarray}
  \label{eq:9}   \Theta_r&=&\left\{(x,v)\in  \Xi:       (v\cdot \nabla d(x) )_+ ^{2q-1} \le r |d(x)|^{q-1} \right\},\\
      \label{eq:10} K_C&=& \{  (x,v)\in  \Xi: \; |v|\le C\},   
     \\
   \label{eq:11}   \Gamma_C &=& \left\{  (\xi,\eta)\in \Gamma: \left|
         \begin{array}[c]{l}
           (\xi(t), \xi'(t))\in K_C, \quad \forall t\in [0,T],\\
        \left \| \frac {d^2\xi}{dt^2} \right\|_{L^q(0,T;\R^n)}\le C
      \end{array}
    \right.  \right\}.
   \end{eqnarray}
   Under Assumption~\ref{sec:optim-contr-probl-2},
for all $r>0$, there exists a positive number $C= C(r, M)$  such that
if  $(x,v)\in \Theta_r$, then $\Gamma^{\rm opt} [x,v]\subset  \Gamma_C$.
Moreover, as $r\to +\infty$,  $C(r,M)= O(r ^{1/q})$.
\end{proposition}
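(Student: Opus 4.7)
The plan is to compare an optimal trajectory with an explicitly constructed admissible competitor, then extract the required derivative bounds from the resulting cost inequality. Given $(x,v)\in\Theta_r$, I would build a comparison trajectory $\tilde\xi\in\Gamma[x,v]$ in two phases. In a short first phase $[0,\tau_1]$ I would apply an (essentially) constant inward acceleration that cancels the outward component $v_n:=(v\cdot\nabla d(x))_+$ of the initial velocity before the boundary margin $|d(x)|$ is exhausted. Choosing $\tau_1$ of order $|d(x)|/v_n$ with acceleration magnitude of order $v_n^2/|d(x)|$ preserves admissibility and produces an $L^q$ contribution proportional to $v_n^{2q-1}/|d(x)|^{q-1}$, which is bounded by $r$ by the very definition of $\Theta_r$. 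In a second phase $[\tau_1,T]$ I would smoothly interpolate (by a piecewise polynomial, cubic if $q=2$) toward a fixed interior target $(x_0,0)$ with $x_0\in\Omega$ chosen once for all; convexity of $\Omega$ and the fact that the outward velocity has been cancelled guarantee the interpolant stays in $\overline\Omega$, and the cost of this phase depends only on $T$, $|v|$, and geometric data of $\Omega$.

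Once this competitor is in hand, the bound on $\|\xi''\|_{L^q}$ for any $\xi\in\Gamma^{\rm opt}[x,v]$ follows from a two-sided cost estimate. Under Assumption~\ref{sec:optim-contr-probl-2}, for any $\zeta\in W^{2,q}(0,T)\cap\Gamma$,
\begin{displaymath}
\frac 1 q \|\zeta''\|_{L^q}^q - (T+1)M \;\le\; J(\zeta) \;\le\; \frac 1 q \|\zeta''\|_{L^q}^q + (T+1)M.
\end{displaymath}
Combining this with the optimality inequality $J(\xi)\le J(\tilde\xi)$ and the previous bound on $\|\tilde\xi''\|_{L^q}^q$ would give
$\|\xi''\|_{L^q}^q \le c_1 r + c_2$, where $c_1,c_2$ depend only on $q,T,M$ and geometric data. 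As $r\to+\infty$ the first term dominates, yielding the announced scaling $\|\xi''\|_{L^q}=O(r^{1/q})$.

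The pointwise velocity bound follows at once: from $\xi'(t)=v+\int_0^t\xi''(s)\,ds$ and H\"older's inequality with exponents $q,p$,
\begin{displaymath}
|\xi'(t)| \;\le\; |v| + T^{1/p}\,\|\xi''\|_{L^q},
\end{displaymath}
which inherits the $O(r^{1/q})$ scaling. The main technical obstacle, I expect, is the explicit construction of $\tilde\xi$ in dimension $n>1$: one must simultaneously cancel the outward velocity component using the available boundary margin and let the tangential motion evolve without leaving $\overline\Omega$, all while keeping $\|\tilde\xi''\|_{L^q}^q$ growing no faster than linearly in $r$. The convexity of $\Omega$ together with the $C^3$ regularity of $\partial\Omega$ are essential here, and the specific exponents $2q-1$ and $q-1$ in the definition of $\Theta_r$ are precisely those that make the emergency-braking $L^q$-cost scale linearly in $r$, which is what drives the $1/q$ power in the final asymptotic.
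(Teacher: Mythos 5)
Your overall architecture --- compare each optimal trajectory with an explicit braking competitor, use the bound $M$ on $\ell$ and $g$ to turn the optimality inequality into $\|\xi''\|_{L^q}^q\le c_1 r+c_2$, then get the pointwise velocity bound by H\"older --- is indeed the argument behind this proposition (which the paper does not reprove but quotes from \cite{MR4444572}), and your scaling for the braking phase, a cost of order $v_n^{2q-1}/|d(x)|^{q-1}\le r$, is correct and consistent with the exact one-dimensional constant in Theorem \ref{val_func_char}. One point you should make explicit: to place $(\xi(t),\xi'(t))$ in $K_C$ you also need $|v|$ itself to be $O(r^{1/q})$ on $\Theta_r$; in dimension one this follows from \eqref{eq:12}, since $1\pm x\le 2$ gives $|v|\le (2^{q-1}r)^{1/(2q-1)}$, which is dominated by $r^{1/q}$.

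The genuine gap is the admissibility of your competitor when $n>1$, and the tool you invoke for it points the wrong way. In a convex domain, tangential motion strictly decreases the distance to the boundary (think of a disc), so convexity does not guarantee that either the normal-braking phase with the tangential velocity left untouched, or the cubic interpolation toward an interior target, stays in $\overline\Omega$; in particular, an interpolant started at a boundary point with a nonzero tangential velocity $v_{\mathrm{tan}}$ must supply an inward normal acceleration of at least the centripetal size $\kappa|v_{\mathrm{tan}}|^2$, which a spline spread over a horizon of order $T$ need not provide. In the paper (and in \cite{MR4444572}), convexity is used for the opposite purpose --- the lower bound via the supporting half-space, see the Second Step of the proof of Proposition \ref{prop:clos-graph-prop} and the remark following it --- while admissible competitors near the boundary are built with the flattening charts of Lemma \ref{def:local_charts}: in the chart the constraint becomes exactly $e_n\cdot\hat\xi\le 0$, one freezes the tangential chart-velocity, applies the one-dimensional braking law to the normal coordinate, and then controls the extra cost coming from the $D^2\Psi_x$ (curvature) terms, using that $|D\Psi_x\,e_n|=1$. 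Alternatively, after the braking one may simply stop (remaining at rest is admissible), which removes your phase 2 altogether, but the braking phase itself still requires the chart, or an equivalent quantitative curvature estimate, to be admissible with a cost still of order $r$. Without such a device the construction, and hence the bound $c_1r+c_2$ and the asymptotics $C(r,M)=O(r^{1/q})$, is not justified for $n>1$.
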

\begin{remark} \label{sec:bounds_opt_traj_1d}
  In the case when $n=1$ and $\Omega=(-1,1)$,  \eqref{eq:9}
 reads
  \begin{equation}
     \label{eq:12}
 \Theta_r=\left\{(x,v)\in  \Xi:     -r (x+1)^{q-1}  \le |v|^{2q- 2} v\le r (1-x)^{q -1} \right\}. \end{equation}
\end{remark}

\section{ Explicit formulas for the value function in \eqref{eq:6} in the case $n=1$, $\ell=0$ and $g=0$}
\label{sec:auxil_opt_cont_pb}
\subsection{An auxiliary  variational problem}
\label{sec:auxil_pb}

For $q>1$, let us introduce an  auxiliary function $f: \; [1,+\infty)\to \R$, that will be used only in Section \ref{sec:auxil_pb}: 
      \begin{equation}
        \label{eq:13}
        f(y)= \left \{
          \begin{array}[c]{rcl}
           \ds -q  \frac {y^{\frac q {q-1}}} {y^{\frac q {q-1}}-(y-1)^{\frac q {q-1}}}
+(q-1) y, \quad  &\hbox{if }&y>1,\\
            1 , \quad  &\hbox{if }&y=1.
          \end{array}
\right.
      \end{equation}

\paragraph{\bf Formulation of the auxiliary variational problem and characterization of its optimal value}

Given a positive horizon $T>0$, the admissible trajectories
in our auxiliary variational problem
are the functions  $\xi\in W^{2,q}(0,T)$ such that $\xi (s) \le 0$ for all $s\in [0,T]$. For $\xi\in W^{2,q}(0,T)$, we will often set  $\eta(s)=\xi'(s)$ and $\alpha(s)=\xi''(s)$,
$\eta(s)$ and $\alpha(s)\in \R$ are respectively the velocity and  the acceleration at time $s$.

Consider  two velocities $w,v$, $0\le w< v$ and $\theta \in (0,T]$. In Proposition  \ref{sec:prop_1} below, we estimate the minimal cost
 $ \frac 1 q \int_0^\theta  \left|\frac {d^2 \xi}{dt^2} (s) \right| ^q ds$
 associated to admissible  trajectories  $\xi$ such that  $\xi(0)=x,\;\xi'(0)=v$ and  $\xi'(\theta)=w$.

\begin{proposition} \label{sec:prop_1}
  Consider $q\in (1,+\infty)$, $T>0$,  $x<0$, $v> 0$,  such that  $
 \frac {2q-1}{q-1} \frac {|x|} v< T$.
  Given  two real numbers  $\theta\in \left( 0 ,T\right]$ and  $w\in [0, v) $, 
 set
   \begin{equation}
   \label{eq:14}
     K_{\theta,w}=\left \{ \eta\in W^{1,q}(0,\theta;\R): \left|
       \begin{array}[c]{l}
         \eta(0)=v,\quad \eta(\theta)= w,\\
         \eta(s)\ge    w,\;   \forall s\in [0,\theta],\\
       \ds   x+\int_0^\theta \eta(s) ds \le 0
       \end{array}\right. \right\}.
 \end{equation}
 (i) The optimization problem
 \begin{equation}
     \label{eq:15}
     I(\theta,w)= \inf_{\eta\in K_{\theta,w} }  \frac 1 q \int_0^\theta  \left|\frac {d\eta}{dt} (s) \right| ^q ds 
   \end{equation}
   has a unique minimizer denoted $\eta_{\theta, w}$, and 
   \begin{equation}\label{eq:16}
     I(\theta,w)\!=\!\! \left | \!\!
       \begin{array}[c]{rl}
         \ds \frac 1 q  \frac {(v-w)^{q}} { \theta^{q-1}} ,
         \quad \quad\quad &\hbox{if }  \theta\in \left(0, \frac {2|x|}{v+w}\right], 
         \\ \\
         \ds \frac { \theta^{-(q-1)}} {2q-1} \left( \frac {q (v-w)}{q-1}\right)^q
         \frac { X^{\frac {2q-1}{q-1}} -(X-1)^{\frac {2q-1}{q-1}} }{ \left( X^{\frac {q}{q-1}} -(X-1)^{\frac {q}{q-1}}\right)^q}, &  \hbox{if } \theta\in  \left( \frac {2|x|}{v+w},   \frac {(2q-1)|x|}{(q-1)v+qw} \right],
\\ \\
         \ds \frac {q^{q-1}} {(2q-1)^q}   \frac {(v-w)^{2q-1}} { (|x|-\theta w)^{q-1}}
 , \quad\quad \quad &\hbox{if } \theta \in  \left(  \frac {(2q-1)|x|}{(q-1)v+qw}  ,T\right],
  \end{array}\right.
\end{equation}
where,  with $f$ defined in \eqref{eq:13},  $X$ is the unique solution in $[1,+\infty)$ of
  \begin{equation}\label{eq:17}
      f(X)=  -\frac 1{v-w} \left((2q-1)\frac x \theta +(q-1)w +qv\right).
  \end{equation}  
\noindent
  (ii) The quantity $I(\theta,w)$   depends continuously on $ (\theta, w) \in (0,T]\times [0,v)$.

  \medskip
  
 \noindent (iii) The minimizer $\eta_{\theta, w}$ has an explicit form  if $ \theta\in \left  (0,  \frac {2|x|}{v+w}\right]\cup \left[  \frac {(2q-1)|x|}{(q-1)v+qw}  ,T\right]$.

 \medskip

 \noindent
(iv) If $q=2$ and $ \theta \in  \left( \frac {2|x|}{v+w},   \frac {(2q-1)|x|}{(q-1)v+qw} \right)=\left( \frac {2|x|}{v+w},  \frac {3|x|}{v+2w}  \right)$, then
\eqref{eq:17}  has an explicit solution, and  the second line of \eqref{eq:16} becomes
\begin{equation}\label{eq:18}
 I(\theta,w)= 
  \ds 6 \frac {x^2} {\theta^3} + 6 \frac {x(v+w)}{\theta^2} +2 \frac { v^2+vw+w^2}\theta, \quad \hbox{ if }  \theta \in \left( \frac {2|x|}{v+w},  \frac {3|x|}{v+2w}  \right).
\end{equation}
 \end{proposition}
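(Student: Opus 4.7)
The plan is to view \eqref{eq:15} as a strictly convex, coercive minimization problem on $W^{1,q}(0,\theta)$ over the convex, weakly closed constraint set $K_{\theta,w}$: the functional is strictly convex (since $q>1$), and both the pointwise lower bound $\eta \geq w$ and the integral inequality pass to weak $W^{1,q}$ limits. The direct method gives existence and uniqueness of $\eta_{\theta,w}$ at once. I would then write the KKT conditions with a scalar multiplier $\lambda \geq 0$ for the integral inequality and a nonnegative Radon measure $\mu$ on $[0,\theta]$ supported on $\{\eta_{\theta,w}=w\}$ for the pointwise constraint, yielding the Euler--Lagrange identity $-\frac{d}{ds}\bigl(|\eta'|^{q-2}\eta'\bigr) + \lambda = \mu$ in the sense of distributions. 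The three lines in \eqref{eq:16} correspond to the three possible activity patterns.

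\emph{Case 1} ($\theta$ small): both multipliers vanish, so $|\eta'|^{q-2}\eta'$ is constant and the minimizer is the linear interpolation $\eta(s) = v - (v-w)s/\theta$; this is admissible iff $\theta(v+w)/2 \leq |x|$, and the cost is immediate. \emph{Case 3} ($\theta$ large): the lower bound is active on a maximal terminal interval $[s^*,\theta]$ on which $\eta \equiv w$, and smooth fit at $s^*$ gives $\eta'(s^*)=0$. Integrating the EL equation $|\eta'|^{q-2}\eta'=\lambda(s-s^*)$ on $[0,s^*]$ yields $\eta'(s) = -\lambda^{1/(q-1)}(s^*-s)^{1/(q-1)}$. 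The condition $\eta(0)-\eta(s^*)=v-w$ determines $\lambda$ as a function of $s^*$; imposing $\int_0^\theta \eta = |x|$ then pins down $s^* = (2q-1)(|x|-w\theta)/[(q-1)(v-w)]$ by a clean power-law calculation. Substitution into $\frac{1}{q}\int_0^{s^*}|\eta'|^q$ produces the third line of \eqref{eq:16}, and the constraint $s^*\leq\theta$ yields exactly the stated threshold $\theta \geq (2q-1)|x|/[(q-1)v+qw]$.

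\emph{Case 2} (intermediate $\theta$) is the technical heart. With only the integral multiplier active, $|\eta'|^{q-2}\eta'=\lambda s + c$ is affine on $[0,\theta]$. Introducing the extrapolated zero $s^* = -c/\lambda$, which now satisfies $s^* > \theta$, and the parameter $X = s^*/\theta \in (1,\infty)$, one writes $\eta'(s) = -\mu(s^*-s)^{1/(q-1)}$ with $\mu = \lambda^{1/(q-1)}$. Computing $v-w = -\int_0^\theta\eta'$ and $\int_0^\theta\eta$ as explicit power-type integrals in the variables $(\mu,\theta,X)$ and eliminating $\mu$ between them produces the scalar equation \eqref{eq:17}, with $f$ as in \eqref{eq:13}. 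I would then verify by one-variable calculus that $f$ is strictly monotone on $[1,\infty)$ with $f(1)=1$, singling out a unique $X$; substituting $\mu$ and $s^*$ back into $\frac{1}{q}\int_0^\theta|\eta'|^q$ then gives the second line of \eqref{eq:16}. Continuity (ii) is obtained by matching at the two transition values: as $X\to\infty$, the asymptotic $X^\alpha - (X-1)^\alpha \sim \alpha X^{\alpha-1}$ on the two exponents $\alpha = q/(q-1)$ and $\alpha=(2q-1)/(q-1)$ reduces the Case 2 formula to that of Case 1, while setting $X=1$ kills the $(X-1)^\alpha$ terms and collapses Case 2 to Case 3 at $\theta = (2q-1)|x|/[(q-1)v+qw]$. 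Part (iii) records the linear profile in Case 1 and the explicit power law plus the constant $w$ in Case 3. For (iv), at $q=2$ one computes $f(y) = -y/(2y-1)$, so \eqref{eq:17} becomes linear in $X$ and admits a closed-form solution; routine algebraic simplification (already verifiable against a direct Euler--Lagrange calculation with $\eta'' = \lambda$ constant) reduces the Case 2 expression to the polynomial \eqref{eq:18}.

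The main obstacle is Case 2 for general $q$: spotting the right parametrization that converts the three conditions (two endpoints and one integral equality) into the single transcendental relation \eqref{eq:17}, proving that $f$ is a bijection from $[1,\infty)$ onto the range covered as $\theta$ varies on the intermediate interval, and carrying the power-law integrals through without error. A secondary subtlety is the asymptotic matching at $\theta = 2|x|/(v+w)$, where one must extract the leading cancellation between two near-equal large quantities $X^\alpha$ and $(X-1)^\alpha$ as $X\to\infty$ in both the numerator and the $q$-th power in the denominator of the Case 2 formula.
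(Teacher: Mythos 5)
Your plan follows essentially the same route as the paper's proof: direct method for existence and uniqueness on the convex closed set $K_{\theta,w}$, first-order conditions with a scalar multiplier for the integral constraint, the three activity patterns (linear profile when the integral constraint is slack; obstacle active on a terminal interval with the smooth-fit condition $\eta'(s^*)=0$; affine $|\eta'|^{q-2}\eta'$ with extrapolated zero $s^*=X\theta>\theta$), elimination of the multiplier leading to the scalar equation \eqref{eq:17} together with the monotonicity of $f$ (the paper's Lemma~\ref{lem_2}), and continuity by matching at $X\to\infty$ and $X=1$. One remark: carrying out your Case 2 substitution actually produces the prefactor $\frac{q^{q-1}}{(2q-1)(q-1)^{q-1}}$ rather than the printed $\frac{1}{2q-1}\bigl(\frac{q(v-w)}{q-1}\bigr)^{q}(v-w)^{-q}$-type constant (a factor $\frac{q-1}{q}$ is dropped in the paper between the displayed computation and \eqref{eq:62}); your asymptotic matching of Case 2 with Case 1 as $X\to\infty$, the collapse to Case 3 at $X=1$, and the agreement with \eqref{eq:18} and with Lemma~\ref{lem_var_w} at $q=2$ all hold precisely for the corrected constant, so this is a typo in the second line of \eqref{eq:16} rather than a flaw in your argument.
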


 \begin{proof}
   The proof, which consists of studying a related variational inequality with a $q$-laplacian, is written in appendix.
 \end{proof}

 \begin{remark} \label{sec:clos-graph-prop-6}
The partition of the interval $(0,T]$  in (\ref{eq:16}) is justified by the assumptions of Proposition~\ref{sec:prop_1}. Indeed
\begin{itemize}
\item  $   \frac {2q-1}{q-1} \frac {|x|} v< T$ and $w\ge 0$ imply that $ \frac {(2q-1)|x|}{(q-1)v+qw} < T$
\item
$  \frac {2|x|}{v+w} <  \frac {(2q-1)|x|}{(q-1)v+qw} $ 
 because $0\le w< v$.
\end{itemize}
Note also that  if $|x|/v\to 0$, then  $ \frac {(2q-1)|x|}{(q-1)v+qw}  \ll T$, because 
$\frac {(2q-1)|x|}{(q-1)v+qw} \le \frac {(2q-1) |x|} {(q-1)v}$.
\end{remark}

\paragraph{\bf Variations of $I(\theta, w)$ with respect to $w$ for a fixed $\theta \in   \left (\frac {|x|} v, 
 \frac {(2q-1) |x|}{ (q-1) v}
    \right ]$}

The following lemma will be important in order to obtain the desired explicit formula for the value function of \eqref{eq:6} in the case $n=1$, $\ell=0$ and $g=0$.
\begin{lemma}
  \label{lem_var_w}
  Take $q,T,x,v$ as in Proposition \ref{sec:prop_1}, and fix 
 $\theta \in   \left (\frac {|x|} v,   \frac {2q-1} {q-1} \frac {|x|} v  \right ]$ .

  The map $w\mapsto I(\theta, w)$ is decreasing in the interval  $\left[0,  \frac { (2q-1)|x|/\theta-(q-1)v } q \right]$ and increasing in the interval $ \left[   \frac { (2q-1)|x|/\theta-(q-1)v } q , v\right)$. Its minimal value is $\frac { (2q-1)^{q-1}}{q (q-1)^{q-1}} \frac { (v -|x|/\theta)^q }{\theta^{q-1}}$.
\end{lemma}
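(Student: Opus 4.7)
The plan is to identify $w^\star := ((2q-1)|x|/\theta - (q-1)v)/q$ as the unique minimizer of $w \mapsto I(\theta,w)$ by combining strict convexity with a direct computation of the one-sided derivative coming from the Case~3 formula in \eqref{eq:16}. I would first check that the hypothesis $\theta \in (|x|/v,\,(2q-1)|x|/((q-1)v)]$ is precisely what ensures $w^\star \in [0,v)$, and moreover that $w^\star < |x|/\theta$ (equivalent to $\theta > |x|/v$), which guarantees that $K_{\theta,w^\star}$ is non-empty.

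For strict convexity of $w \mapsto I(\theta,w)$ on $[0,v)$ (with the convention $I(\theta,w)=+\infty$ when $K_{\theta,w}$ is empty), I would take $w_1<w_2$ in the feasibility region, let $\eta_i$ be the corresponding unique minimizers from Proposition~\ref{sec:prop_1}(i), and, for $\tau\in(0,1)$, consider $\eta_\tau := \tau\eta_1 + (1-\tau)\eta_2$. One checks directly that $\eta_\tau$ lies in $K_{\theta, w_\tau}$ with $w_\tau := \tau w_1 + (1-\tau)w_2$: the boundary values and integral constraint are preserved under averaging, and the pointwise inequality $\eta_\tau \ge w_\tau$ follows from $\eta_i \ge w_i$. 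Since $\eta_1(0)=\eta_2(0)=v$ while $\eta_1(\theta)\ne\eta_2(\theta)$, the derivatives $\eta_1'$ and $\eta_2'$ differ on a set of positive measure, so strict convexity of $z \mapsto |z|^q/q$ yields $I(\theta,w_\tau) < \tau I(\theta,w_1) + (1-\tau) I(\theta,w_2)$.

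Next I would differentiate the third line of \eqref{eq:16} directly: for $w\in(w^\star,|x|/\theta)$,
\begin{equation*}
\partial_w I(\theta,w) \;=\; \frac{q^{q-1}}{(2q-1)^q}\, \frac{(v-w)^{2q-2}}{(|x|-\theta w)^q}\, \bigl[\theta\bigl((q-1)v+qw\bigr) - (2q-1)|x|\bigr].
\end{equation*}
The bracket vanishes exactly at $w^\star$ and is positive for $w>w^\star$, so $I$ is strictly increasing on $(w^\star,|x|/\theta)$. For $w\in[|x|/\theta,v)$ the set $K_{\theta,w}$ is empty (since $\eta\ge w$ forces $\int_0^\theta \eta \ge w\theta > |x|$), so $I(\theta,w)=+\infty$ and monotonicity is preserved. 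Combining the vanishing right derivative at $w^\star$ with strict convexity forces $w^\star$ to be the unique minimizer; hence $I(\theta,\cdot)$ is strictly decreasing on $[0,w^\star]$ and strictly increasing on $[w^\star,v)$.

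Finally, I would evaluate $I(\theta,w^\star)$ by passing to the limit $w\to (w^\star)^+$ in the Case~3 formula, which is justified by the continuity in Proposition~\ref{sec:prop_1}(ii). The identity $(q-1)v+qw^\star = (2q-1)|x|/\theta$ gives $v-w^\star = (2q-1)(v-|x|/\theta)/q$ and $|x|-\theta w^\star = \theta(q-1)(v-|x|/\theta)/q$, and a short algebraic simplification yields the announced value $(2q-1)^{q-1}(v-|x|/\theta)^q / (q(q-1)^{q-1}\theta^{q-1})$. The main benefit of this strategy is that convexity lets one sidestep entirely the differentiation of the implicit Case~2 formula involving the function $f$ and the auxiliary quantity $X$; the minor technical points are checking the location of $w^\star$ in $[0,v)$, the degenerate case $w^\star = 0$ at the right endpoint of the admissible range of $\theta$, and the continuity up to $w = w^\star$ of the Case~3 expression.
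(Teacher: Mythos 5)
Your proposal is correct, and it takes a genuinely different route from the paper's proof. The paper argues branch by branch on \eqref{eq:16}: the first and third lines give the monotonicity on $[0,\max(0,2|x|/\theta-v)]$ and on $[w^\star,v)$ with $w^\star=((2q-1)|x|/\theta-(q-1)v)/q$, and the delicate part is the intermediate branch, where one must differentiate the implicit expression proportional to $(v-w)^q g(X(w))$ (with $g$ as in \eqref{eq:63} and $X(w)$ the solution of \eqref{eq:17}), using the formula for $f'$ from Lemma~\ref{lem_2} to establish the derivative inequality \eqref{eq:64}. You bypass that computation entirely: strict convexity of $w\mapsto I(\theta,w)$ on the feasible interval $[0,|x|/\theta)$ --- legitimate because averaging competitors maps $K_{\theta,w_1}\times K_{\theta,w_2}$ into $K_{\theta,\tau w_1+(1-\tau)w_2}$ and $z\mapsto|z|^q$ is strictly convex, with strictness coming from $\eta_1(\theta)\ne\eta_2(\theta)$ --- combined with the explicit derivative of the third-line formula, whose bracket $\theta((q-1)v+qw)-(2q-1)|x|$ vanishes exactly at $w^\star$, identifies $w^\star$ as the unique minimizer and yields both monotonicity claims at once; the minimal value then follows by evaluating the third-line formula at $w^\star$ (your algebra checks out and matches the statement). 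What each route buys: yours is shorter, never touches $f$, $X$ or the intermediate branch of \eqref{eq:16}, gives strict monotonicity and uniqueness of the minimizer for free, and makes explicit the point (left implicit in the paper) that $K_{\theta,w}=\emptyset$, hence $I(\theta,w)=+\infty$, for $w\ge|x|/\theta$, so that ``increasing on $[w^\star,v)$'' is understood with that convention; the paper's computation, in exchange, stays entirely within the formulas of Proposition~\ref{sec:prop_1} without introducing a new variational argument and records quantitative facts about the intermediate branch (the ratio $g'/(f'g)$ lies in $(0,1)$, and $X(w_{\min})=1$) that also deliver the minimal value. The technical points you flag --- that the hypothesis on $\theta$ is exactly what puts $w^\star$ in $[0,v)$ with $w^\star<|x|/\theta$, the degenerate case $w^\star=0$ at $\theta=(2q-1)|x|/((q-1)v)$, and continuity of the Case~3 expression up to $w^\star$ via Proposition~\ref{sec:prop_1}(ii) --- are all handled correctly, so there is no gap.
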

\begin{proof}
  The proof is written in appendix.
\end{proof}

\paragraph{\bf Study of $\inf_{\theta \in (0,T] }  I (\theta,w)$ for a fixed $w\in [0,v)$}

The following lemma  contains information on the function $\theta\mapsto I(\theta,0)$.
\begin{lemma}\label{lemma:ass_infimum0}
   Take $q,T,x,v$ as in Proposition \ref{sec:prop_1}.
  The function $\theta \mapsto I(\theta,0)$ is non-increasing on $[0,T]$.
  The minimal value of $\theta \mapsto I(\theta,0)$ on $(0,T)$ is  $\frac {q^{q-1}} {(2q-1)^q}   \frac { v^{2q-1}} { |x|
    ^{q-1}}$.
\end{lemma}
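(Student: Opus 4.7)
The plan is to prove both assertions by a simple extension-by-zero argument, combined with direct inspection of the third case in formula \eqref{eq:16}.

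For the monotonicity, I would fix $0 < \theta < \theta' \le T$ and an arbitrary $\eta \in K_{\theta,0}$. Since the constraint $\eta(\theta)=w=0$ is automatically compatible with the function $0$, define the extension
\begin{equation*}
\tilde\eta(s) = \begin{cases} \eta(s), & s \in [0,\theta], \\ 0, & s \in [\theta, \theta']. \end{cases}
\end{equation*}
Because $\eta(\theta)=0$, the extension $\tilde\eta$ is continuous on $[0,\theta']$; its weak derivative equals $\eta'$ on $(0,\theta)$ and vanishes on $(\theta,\theta')$, so $\tilde\eta \in W^{1,q}(0,\theta')$. The conditions $\tilde\eta(0)=v$, $\tilde\eta(\theta')=0$, $\tilde\eta \ge 0$ are immediate, and $\int_0^{\theta'}\tilde\eta(s)\,ds = \int_0^\theta \eta(s)\,ds \le |x|$. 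Hence $\tilde\eta \in K_{\theta',0}$. Since $\tilde\eta' = 0$ on $(\theta,\theta')$, one has
\begin{equation*}
\frac{1}{q}\int_0^{\theta'}|\tilde\eta'(s)|^q\,ds = \frac{1}{q}\int_0^{\theta}|\eta'(s)|^q\,ds.
\end{equation*}
Taking the infimum over $\eta \in K_{\theta,0}$ yields $I(\theta',0)\le I(\theta,0)$, proving that $\theta\mapsto I(\theta,0)$ is non-increasing on $(0,T]$.

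For the value of the minimum, I would invoke Remark~\ref{sec:clos-graph-prop-6}, which (under the running hypothesis $\frac{2q-1}{q-1}\frac{|x|}{v} < T$) guarantees that the interval $\bigl(\frac{(2q-1)|x|}{(q-1)v},\,T\bigr]$ appearing in the third line of \eqref{eq:16} is non-empty. Setting $w=0$ in the third formula,
\begin{equation*}
I(\theta,0) = \frac{q^{q-1}}{(2q-1)^q}\,\frac{v^{2q-1}}{|x|^{q-1}}
\quad\text{for every } \theta \in \Bigl(\tfrac{(2q-1)|x|}{(q-1)v},\,T\Bigr],
\end{equation*}
which is independent of $\theta$. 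Combined with the monotonicity established above, this constant value is precisely the minimum of $\theta \mapsto I(\theta,0)$ on $(0,T)$.

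I do not anticipate any real obstacle: the only point requiring a moment of care is the verification that the zero-extension lies in $W^{1,q}$, which is handled by the continuity at $\theta$ coming from the boundary condition $\eta(\theta)=0$. Note that this extension trick is specific to $w=0$; for general $w>0$ the monotonicity in $\theta$ is not available in this form, which is consistent with the more delicate dependence described by Lemma~\ref{lem_var_w}.
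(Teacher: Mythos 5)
Your proof is correct and rests on the same key idea as the paper's: extension by zero to get monotonicity in $\theta$, combined with the explicit constant value given by the third line of \eqref{eq:16} at $w=0$ on the non-empty interval $\bigl(\tfrac{(2q-1)|x|}{(q-1)v},T\bigr]$. The only (harmless) difference is that you apply the zero-extension to arbitrary competitors over the whole range of $\theta$, whereas the paper extends only the minimizer $\eta_{\theta',0}$ on the middle interval and reads off the behavior on the first and third intervals directly from \eqref{eq:16}; your variant is slightly more uniform and does not even need the existence of a minimizer.
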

\begin{proof}
  We know from \eqref{eq:16} that $\theta\mapsto  I(\theta,0)$ is decreasing on $(0, 2|x|/v]$ and constant on $[\frac {2q-1} {q-1} \frac {|x|}{v},T]$.
  Consider $\theta,\theta'\in [ \frac{2|x|} v, \frac {2q-1} {q-1} \frac {|x|}{v}]$ such that $\theta'<\theta$.
  The function $\eta_{\theta ', 0}$  extended by $0$ in $[\theta', \theta]$ becomes an admissible competitor for the variational problem defining $ I(\theta, 0)$. Therefore $I(\theta,0)\le I(\theta', 0)$.
  
  Hence,  $\theta \mapsto I(\theta,0)$ is non-increasing on $[0,T]$ and the lemma is proved.
\end{proof}
The following proposition deals with the behaviour of  $\inf_{\theta\in (0,T]} I(\theta,w)$ as
$\frac {|x|}{v}\to 0$,  $\frac w v \to 0$ and $ \frac { v ^{2q-1}} { |x|    ^{q-1}}$ remains bounded from below by a positive constant. It will be useful to study the singularities of the value function in \eqref{eq:6}.
\begin{proposition}\label{lemma:ass_infimum}
  Consider  $q\in (1,+\infty)$, $T>0$. Let $(x_i, v_i)_{i\in \N}$ be such that  $x_i<0$, $v_i>0$,  $\frac {|x_i|}{v_i}\to 0$  and $ \frac { v_i ^{2q-1}} { |x_i|    ^{q-1}}>c$ for a given positive constant.
Let  $(w_i)_{i\in \N}$ be such that $0\le w_i\le v_i$ and $\frac {w_i}{v_i}\to 0$, and
let $I_i(\theta, w_i)$ be the quantity given by  \eqref{eq:16} for $x=x_i$, $v=v_i$, $w=w_i$. Then, 

(i) as $i\to \infty$,
\begin{equation}
  \label{eq:19}
  \inf_{\theta\in (0,T]} I_i(\theta,w_i)\sim \frac {q^{q-1}} {(2q-1)^q}   \frac {v_i^{2q-1}} { |x_i|    ^{q-1}}.
\end{equation}

(ii) If furthermore $q=2$, then as $\frac {v_i^2 w_i} {|x_i|}\to 0$,
\begin{equation}
    \label{eq:20}
    \inf _{\theta\in (0,T]}   I_i(\theta,w_i)
  = \frac 2 9 \frac {v_i^3} {|x_i|} +O\left(\frac {v_i^2 w_i} {|x_i|}\right).
\end{equation}

\end{proposition}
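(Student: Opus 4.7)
My plan is to sandwich $\inf_{\theta\in(0,T]} I_i(\theta,w_i)$ between two explicit quantities that agree asymptotically: an upper bound obtained by evaluating $I_i$ at the regime B--C boundary, and a lower bound obtained by a shift argument that reduces the $w_i>0$ problem to the case $w=0$ treated in Lemma~\ref{lemma:ass_infimum0}. Set $T_0:=q^{q-1}/(2q-1)^q$ and $\theta_i^\star := (2q-1)|x_i|/((q-1)v_i+qw_i)$. Since $\theta_i^\star\le (2q-1)|x_i|/((q-1)v_i)$ and $|x_i|/v_i\to 0$, we have $\theta_i^\star\in(0,T]$ for $i$ large. Using the identity $|x_i|-\theta_i^\star w_i=(q-1)(v_i-w_i)|x_i|/((q-1)v_i+qw_i)$ in the third branch of \eqref{eq:16} yields the explicit upper bound
\begin{equation*}
\inf_{\theta\in(0,T]} I_i(\theta,w_i) \;\le\; I_i(\theta_i^\star,w_i) \;=\; \frac{T_0}{(q-1)^{q-1}}\cdot\frac{(v_i-w_i)^q\bigl((q-1)v_i+qw_i\bigr)^{q-1}}{|x_i|^{q-1}}.
\end{equation*}

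For the lower bound, given any $\eta\in K_{\theta,w_i}$ (see \eqref{eq:14}), I define the shifted trajectory $\tilde\eta(s):=\eta(s)-w_i$. One checks that $\tilde\eta(0)=v_i-w_i$, $\tilde\eta(\theta)=0$, $\tilde\eta\ge 0$, and $\int_0^\theta\tilde\eta\,ds\le |x_i|-w_i\theta$, so $\tilde\eta$ is admissible for the $w=0$ problem with initial data $x'=-(|x_i|-w_i\theta)$ and $v'=v_i-w_i$. Since $\tilde\eta'=\eta'$, the cost is preserved; the monotonicity of $\theta\mapsto I(\theta,0)$ provided by Lemma~\ref{lemma:ass_infimum0} then yields the universal bound $I(\theta,0;x',v')\ge T_0(v')^{2q-1}/|x'|^{q-1}$ at any admissible $\theta$, regardless of whether $\theta$ lies in the regime C of the shifted problem. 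Therefore
\begin{equation*}
I_i(\theta,w_i) \;\ge\; T_0\cdot\frac{(v_i-w_i)^{2q-1}}{(|x_i|-w_i\theta)^{q-1}};
\end{equation*}
the right-hand side is non-decreasing in $\theta$, so letting $\theta\to 0^+$ produces the lower bound $\inf_{\theta\in(0,T]} I_i(\theta,w_i)\ge T_0(v_i-w_i)^{2q-1}/|x_i|^{q-1}$.

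For~(i), dividing both the upper and lower bounds by $T_0\, v_i^{2q-1}/|x_i|^{q-1}$ and using $w_i/v_i\to 0$ shows both ratios tend to $1$, establishing \eqref{eq:19}. For~(ii) with $q=2$, the bounds simplify to $\frac{2(v_i-w_i)^2(v_i+2w_i)}{9|x_i|}$ and $\frac{2(v_i-w_i)^3}{9|x_i|}$ respectively; expanding in powers of $w_i$, each differs from $\frac{2v_i^3}{9|x_i|}$ by $O(v_i^2 w_i/|x_i|)$, which proves \eqref{eq:20}. The main technical point is the justification of the universal lower bound $I(\theta,0;x',v')\ge T_0(v')^{2q-1}/|x'|^{q-1}$ at an arbitrary admissible $\theta$ in the shifted problem, which I obtain from the non-increasing character of $\theta\mapsto I(\theta,0)$ asserted in Lemma~\ref{lemma:ass_infimum0}; without this observation one would be forced to analyze the complicated regime B formula directly.
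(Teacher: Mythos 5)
Your proof is correct, and it takes a genuinely different route from the paper's. The paper rescales to $(\tau,z)=(\theta v_i/|x_i|,\,w_i/v_i)$, proves that the normalized value $J_i(\tau,z)=\frac{|x_i|^{q-1}}{v_i^{2q-1}}I_i(\frac{|x_i|}{v_i}\tau,zv_i)$ is continuous on the fixed rectangle $[1,\frac{2q-1}{q-1}]\times[0,\frac12]$ uniformly in $i$ (this requires the regime-B branch of \eqref{eq:16} and the continuous dependence of the solution $X$ of \eqref{eq:17} on its right-hand side), and then lets $z\to0$ using Lemma \ref{lemma:ass_infimum0}; point (ii) is obtained separately from the explicit quadratic formula \eqref{eq:18}, with reference to \cite{MR4444572}. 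You instead sandwich the infimum between two closed-form quantities: an upper bound from the regime-C branch at the boundary $\theta_i^\star=\frac{(2q-1)|x_i|}{(q-1)v_i+qw_i}$, and a lower bound from the velocity shift $\eta\mapsto\eta-w_i$, which maps $K_{\theta,w_i}$ cost-preservingly into the $w=0$ problem with data $x'=-(|x_i|-w_i\theta)$, $v'=v_i-w_i$, so that Lemma \ref{lemma:ass_infimum0} applies. This avoids regime B and the uniform-continuity argument entirely, does not use the hypothesis $v_i^{2q-1}/|x_i|^{q-1}>c$ for (i), and is quantitative: writing $z_i=w_i/v_i$, your upper bound is $\frac{q^{q-1}}{(2q-1)^q}\frac{v_i^{2q-1}}{|x_i|^{q-1}}(1-z_i)^q\left(1+\tfrac{q}{q-1}z_i\right)^{q-1}$, whose first-order term in $z_i$ cancels, while your lower bound is $\frac{q^{q-1}}{(2q-1)^q}\frac{(v_i-w_i)^{2q-1}}{|x_i|^{q-1}}$; together these give, for every $q>1$, the error bound $O\left(\frac{v_i^{2q-2}w_i}{|x_i|^{q-1}}\right)$, i.e.\ exactly the generalization of (ii) that the paper states as open and ``challenging'' right after the proposition --- a nontrivial bonus of your approach. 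Two small points to make explicit: (a) $\theta_i^\star$ is excluded from the regime-C interval in \eqref{eq:16}, so either invoke the continuity in $\theta$ from Proposition \ref{sec:prop_1}(ii) or take $\theta\downarrow\theta_i^\star$ within regime C, where that branch is monotone, to justify the upper bound; (b) when $w_i>0$ and $w_i\theta\ge|x_i|$ the set $K_{\theta,w_i}$ is empty and the lower bound is vacuous, and when it is nonempty one automatically has $|x_i|-w_i\theta>0$, so the shifted data satisfy $x'<0$, $v'>0$ and $\frac{2q-1}{q-1}\frac{|x'|}{v'}\le\frac{2q-1}{q-1}\frac{|x_i|}{v_i-w_i}<T$ for $i$ large, which is the hypothesis under which Lemma \ref{lemma:ass_infimum0} is stated; both are routine and do not affect the argument.
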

\begin{proof}
  The proof is written in  appendix.
\end{proof}

\begin{remark}
  It is easy to check  that the function $\psi(x,v)= \frac {q^{q-1}} {(2q-1)^q}   \frac { v ^{2q-1}} { |x|    ^{q-1}}$ solves
  \begin{displaymath}
    -v \frac {d\psi} {dx} (x,v) + \frac 1 p \left|  \frac {d\psi} {dv} (x,v) \right|^{p} =0
  \end{displaymath}
  in $\R_-\times \R_+$,  where $p= q/(q-1)$ is the conjugate exponent of $q$.
\end{remark}

We believe that the generalization of  point (ii) in Proposition \ref{lemma:ass_infimum}
to $q\not=2$ is true, i.e.  for any sequence  $(w_i)_{i\in \N}$ such that  $0\le w_i\le v_i$ and $\lim_{i\to \infty} \frac { w_i} {v_i}=0$, 
\begin{equation*}
    \inf   \left \{I_i(\theta,w_i),\; \theta \in  \left(0 ,T\right)\right\}
    =
 \ds \frac {q^{q-1}} {(2q-1)^q}   \frac {v_i^{2q-1}} { |x_i|^{q-1}}
 +O\left(    \frac {v_i^{2q-2} w_i} {|x_i|^{q-1}} \right),
\end{equation*}
but the proof of this  seems challenging.

\subsection{The optimal   control problem}
\label{sec:auxil_opt_cont_pb_2}
For $t\le T$, let us consider the optimal control problem
\begin{equation}
  \label{eq:21}
  \phi(t,x,v)=\inf \frac 1 q \int_t^T |\xi''(s)|^q ds,
\end{equation}
where the infimum  is taken on the trajectories $\xi\in W^{2,q}(0,T)$  such that $\xi(t)=x$ and $\xi'(t)=v$ and
$\xi(s)\in(-\infty,0]$ for all $s\in [t,T]$.  

  The dynamic programming principle and elementary considerations yield that
  \begin{eqnarray}
    \label{eq:HJB}
  \quad \quad   - \frac {\partial \phi}{\partial t} (t, x,v)    -v \frac {\partial \phi} {\partial x} (t, x,v) + \frac 1 p \left|  \frac {\partial \phi} {\partial v} (t,x,v) \right|^{p} =0,
        \;
    0\le t<T, x<0, v\in \R,\\
    \phi(T,x,v)=0,  \quad  x<0, v\in \R,\\
    \lim_{x \to 0_-}  \phi(t,x,v)=+\infty,\quad  0\le t<T,  v>0,\\
     \lim_{v \to +\infty}  \phi(t,x,v)=+\infty,\quad  0\le t<T,  x<0,\\
    \phi(t,x,v) =0, \quad  0\le t\le T, x<0, v\le 0,
  \end{eqnarray}
  where $p={q}/{(q-1)}$ and the Hamilton-Jacobi equation is meant in the viscosity sense.

  In Theorem \ref{val_func_char} below, we see that there is an explicit formula for  $\phi$.
  \begin{theorem}\label{val_func_char}
    Let $\phi$ be the value function of the optimal control problem described at the beginning of Section \ref{sec:auxil_opt_cont_pb_2},
    \begin{eqnarray}
       \label{eq:phi=0}
      \phi(t,x,v) =0,  &\hbox{ if }& v (T-t) \le  |x|,\\
           \label{eq:phi_intermediate region}
        \phantom{aa} \phi(t,x,v) = \frac { (2q-1)^{q-1}} {q(q-1)^{q-1}} \frac { \left(v- \frac {|x|}{T-t} \right) ^{q}}{(T-t)^{q-1}},
      &\hbox{ if }&  |x| \le v(T-t)\le   \frac {(2q-1)|x|}{(q-1)}   ,\\
       \label{eq:phi=sing_part}
       \phi(t,x,v) =\frac {q^{q-1}} {(2q-1)^q}   \frac {v^{2q-1}} { |x|^{q-1}},  &\hbox{ if }&  v(T-t)\ge  \frac {(2q-1)|x|}{(q-1)}>0.
    \end{eqnarray}
    The function $\phi$ is of class $C^1$ in $[0,T]\times (-\infty,0)\times \R$, so it satisfies (\ref{eq:HJB}) pointwise in this set.

    Finally, for each $(t,x,v)\in [0,T]\times \R_-\times \R$, the  optimal control problem
   \eqref{eq:21} has a unique minimizer.
  \end{theorem}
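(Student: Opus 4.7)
I would prove the theorem by exhibiting an explicit minimizer $\xi^*$ in each of the three regions, verifying its optimality and uniqueness by a strict convexity argument, and finally checking the $C^1$ regularity and HJB equation by direct substitution. Write $\theta:=T-t$.

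\emph{Construction of $\xi^*$.} In Case 1 ($v\theta\le |x|$), the affine trajectory $\xi^*(s)=x+v(s-t)$ is admissible with zero cost, matching \eqref{eq:phi=0}. In Cases 2 and 3 (where necessarily $v>0$), take the ansatz $\xi^{*''}(s)=-\lambda^{p-1}(\tau-s)_+^{p-1}$, $s\in[t,T]$, with parameters $\lambda>0$ and $\tau\in(t,T]$ to be determined. Two integrations starting from $\xi^*(t)=x$, $\xi^{*'}(t)=v$ give explicit formulas for $\xi^{*'}$ and $\xi^*$. In Case 2, I set $\tau=T$ and choose $\lambda$ so that $\xi^*(T)=0$, which gives $\lambda^{p-1}\theta^{p+1}=(p+1)(v\theta-|x|)$. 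In Case 3, I impose both $\xi^{*'}(\tau)=0$ and $\xi^*(\tau)=0$: an elementary computation using $p+1=(2q-1)/(q-1)$ forces $\tau-t=(2q-1)|x|/((q-1)v)\le\theta$, and $\xi^*\equiv 0$ on $[\tau,T]$. In both subcases $\xi^*$ is concave and nondecreasing on $[t,\tau]$ with $\xi^*(\tau)\le 0$, so $\xi^*\le 0$ on $[t,T]$. A direct computation of $(1/q)\int_t^T|\xi^{*''}|^q ds$, using the identities $p(q-1)=q$ and $(p-1)(q-1)=1$, reproduces \eqref{eq:phi_intermediate region} in Case 2 and \eqref{eq:phi=sing_part} in Case 3.

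\emph{Optimality and uniqueness.} Since $a\mapsto |a|^q/q$ is strictly convex with derivative $|a|^{q-2}a$, for any admissible $\xi$ one has
\[
\frac{1}{q}\int_t^T|\xi''|^q\, ds \;\ge\; \frac{1}{q}\int_t^T|\xi^{*''}|^q\, ds \;+\; \int_t^T Q(s)\bigl(\xi''(s)-\xi^{*''}(s)\bigr) ds,
\]
where $Q(s):=|\xi^{*''}(s)|^{q-2}\xi^{*''}(s)=-\lambda(\tau-s)_+$ (using $(p-1)(q-1)=1$). The function $Q$ is Lipschitz on $[t,T]$ with $Q(T)=0$ and $Q'(s)=\lambda\one_{[t,\tau]}(s)$; two integrations by parts, using $\xi(t)=\xi^*(t)=x$ and $\xi'(t)=\xi^{*'}(t)=v$, reduce the correction integral to $\lambda(\xi^*(\tau)-\xi(\tau))$. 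Since $\xi^*(\tau)=0$ in both Cases 2 and 3 and $\xi(\tau)\le 0$ by admissibility, this is $\ge 0$, proving the optimality of $\xi^*$. Equality then forces $\xi''=\xi^{*''}$ almost everywhere by strict convexity, which together with the initial conditions gives $\xi=\xi^*$ and hence uniqueness.

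\emph{Regularity and HJB.} In each open region $\phi$ is smooth, and \eqref{eq:HJB} is verified by substitution, using $p(q-1)=q$ and $p(2q-2)=2q$ to simplify the $(1/p)|\partial_v\phi|^p$ term. At the interface $v(T-t)=|x|$, every first partial derivative of the Case 2 formula carries a factor $(v-|x|/(T-t))^{q-1}\to 0$, matching the vanishing Case 1 value and gradient. At the interface $v(T-t)=(2q-1)|x|/(q-1)$, substituting $|x|/(T-t)=(q-1)v/(2q-1)$ into both formulas shows that the values and first-order partials of Case 2 and Case 3 agree. Hence $\phi\in C^1([0,T]\times(-\infty,0)\times\R)$. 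The main technical point is the IBP in the optimality step: the identity $(p-1)(q-1)=1$ makes $Q$ globally Lipschitz, so the classical IBP against $\xi\in W^{2,q}$ works cleanly without any analytic complication at $s=\tau$—without this cancellation of exponents the argument would be considerably more delicate.
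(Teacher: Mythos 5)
Your proof is correct, but it follows a genuinely different route from the paper. The paper obtains the formulas by reducing $\phi(t,x,v)$ to the auxiliary quantity $I(T-t,w)$ of Proposition \ref{sec:prop_1} (after arguing that the terminal velocity $w$ of an optimal trajectory is its minimal velocity), then optimizing over $w$ via the monotonicity result of Lemma \ref{lem_var_w}; existence, uniqueness and the shape of the minimizer are inherited from the variational-inequality analysis carried out in the appendix, and the pointwise HJB equation is deduced from the $C^1$ regularity together with the viscosity property. You instead guess the optimizer directly through the Pontryagin-type ansatz $\xi^{*\prime\prime}(s)=-\lambda^{p-1}(\tau-s)_+^{p-1}$ and certify its global optimality by convexity of $a\mapsto|a|^q/q$ plus integration by parts against the Lipschitz multiplier $Q(s)=-\lambda(\tau-s)_+$, where $(p-1)(q-1)=1$ linearizes $Q$ and the sign conditions $\lambda\ge 0$, $\xi(\tau)\le 0$, $\xi^*(\tau)=0$ act as complementary slackness; uniqueness then drops out of strict convexity, and you check the HJB equation and the $C^1$ matching across the two interfaces by substitution. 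Your argument is more elementary and self-contained for this particular theorem — it bypasses the appendix machinery, the function $f$ of \eqref{eq:13} and Lemma \ref{lem_var_w} entirely — whereas the paper's detour through $I(\theta,w)$ pays off later, since that quantity and its dependence on $w$ are reused in the closed-graph and singularity results (e.g.\ Propositions \ref{lemma:ass_infimum}, \ref{prop_closed_graph_1d} and \ref{prop:clos-graph-prop}), which your construction does not provide. Two small points worth making explicit: in Case 2 the inequality $\xi^{*\prime}(T)=\bigl((p+1)|x|-v(T-t)\bigr)/\bigl(p(T-t)\bigr)\ge 0$, i.e.\ exactly the upper bound defining that region, is what justifies ``nondecreasing'' and hence admissibility; and $\lambda\ge 0$ requires $v(T-t)\ge|x|$, again exactly the region condition — both are one-line checks, so the argument stands.
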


  \begin{proof}
It is  clear that  \eqref{eq:phi=0} holds, because if $ v (T-t) \le  |x|$,
  the trajectory given by $\xi (s) = x+vs$, $\xi'(s)=v$ for $s\in [t, T]$ is admissible and achieves the smallest possible cost, namely $0$.

 If $v\ge 0$, any optimal trajectory $\xi$ with $\xi(t)=x$ and $\xi'(t)=v$ is such  that $\xi'$ remains nonnegative. Indeed, if the velocity vanishes at some time $\bar t \in [t, T]$, then  $\xi'(s)=0$ for all $s\in [\bar t , T]$, because any other choice of $\xi'_{|(\bar t, T]}$  would lead to a larger cost.

 Let us consider $(x,v)$ such that $ v (T-t) > |x|$, and an  optimal trajectory  $\xi $ such that $\xi(t)=x$ and $\xi'(t)=v$. Let us set $w=\xi'(T)$. 

 We have just observed that $w\ge 0$. On the other hand, $\min_{t\le s\le T} \xi'(s)<v$, otherwise the trajectory would exit the domain $x\le 0$ before $T$. If $w$ was greater than $\min_{t\le s\le T} \xi'(s)$, then there would exist $\bar t\in (t, T)$ such that  $\xi'(\bar t) = \min_{t\le s\le T} \xi'(s)$. The competitor $\tilde \xi$ :
 $\tilde \xi(s)=x+\int_t^s \tilde \eta(\tau) d\tau$,
 $\tilde \eta(s)= \xi'(s)$ for $t\le s \le \bar t$, $\tilde \eta(s)=\xi'(\bar t)$ for $\bar t \le s\le T$ would be admissible and cost less than $\xi$, which is impossible.  Hence, $w=\min_{t\le s\le T}  \xi'(s)$ and there holds $\phi(t,x,v)= I(T-t, w)$ with $I$  defined in  \eqref{eq:15}.

 Suppose now that $v(T-t)\ge  \frac {(2q-1)|x|}{(q-1)}$. This implies that  $T-t\ge  \frac {(2q-1)|x|}{(q-1)v+q w}$ because $w\ge 0$. Hence, from  \eqref{eq:16}, $I(T-t, w)=\frac {q^{q-1}} {(2q-1)^q}   \frac {(v-w)^{2q-1}} { (|x|-(T-t) w)^{q-1}}$, which is minimal for $w=0$.  We have proved  \eqref{eq:phi=sing_part}.

 There remains to characterize the value function $\phi$ in the region
 \begin{displaymath}
 K=  \left \{|x|< v(T-t) <  \frac {(2q-1)|x|}{(q-1)}\right \}.   
 \end{displaymath}
 If $(t,x,v)\in K$, then there exists a unique $\hat w: 0<\hat w<v$ such that $T-t=  \frac {(2q-1)|x|}{(q-1)v+q \hat w}$. From Lemma  \ref{lem_var_w}, $\hat w $ is the unique minimizer of the function $[0,v)\ni w\mapsto I(T-t, w)$.
Hence, if $\frac {|x|} v< T-t <  \frac {(2q-1)|x|}{(q-1)v}$, then  an optimal  trajectory $\xi$ must satisfy $\xi'(T)=\hat w$, and
its cost must be $I(T-t, \hat w)$. From the last point in Lemma  \ref{lem_var_w}, this yields  \eqref{eq:phi_intermediate region}.

 Since in $K$, $ \phi(t,x,v)= I(T-t, \hat w)$ with $\hat w$ such that
  $T-t=  \frac {(2q-1)|x|}{(q-1)v+q \hat w}$, it is clear that $ \phi$ is continuous across 
  $\left \{ v(T-t) =  \frac {(2q-1)|x|}{(q-1)}\right \}$, where $\hat w=0$.
  It is also clear that $\phi(t,x,v)=0 $ on 
  $\left \{ v(T-t) = |x|\right\}$, where $\hat w=v$, so $\phi$ is continuous across this manifold.

  To prove that  $\phi$ is $C^1$, 
we just need to focus on the boundary of $K$.

First, it is easy to check that on  $\left \{ v(T-t) = |x|\right\}$, all the first order partial derivatives of $ \phi$ exist and take the value $0$.

Second,  it can be checked that on both sides of  $\left \{ v(T-t) =  \frac {(2q-1)|x|}{(q-1)}\right \}$,
\begin{itemize}
\item the values of  $ \frac {\partial  \phi}{\partial t} (t,x,v) $
 coincide  and are equal to $0$
\item The values of $ \frac {\partial  \phi}{\partial v}(t,x,v)$  coincide  and are equal to $ \frac {q  ^{q-1} }   {(2q-1)^{ q-1}} \frac {v^{2q-2} } { |x|^{q-1} }$
\item   The values of $\frac {\partial  \phi}{\partial x}(t,x,v)$ coincide and are equal to
  $ \frac {  (q-1)q^{q-1}} {(2q-1)^q} \frac {v^{2q-1}}{|x|^q}$.
\end{itemize}
We have proven that $\phi$ is $C^1$   in $[0,T]\times (-\infty,0)\times \R$.

Finally, because $\phi(t,x,v)$ is characterized in terms of $I(T-t, w)$ for a unique $w$, the proof of Proposition  \ref{sec:prop_1} yields the uniqueness and the 
characterization of the optimal trajectory related to $(t,x,v)$.
\end{proof}
An easy corollary of Theorem \ref{val_func_char} deals with the value function
of the  optimal control problem \eqref{eq:6} for $n=1$, $\Omega=(-1,1)$, $\ell=0$ and $g=0$:
\begin{corollary} \label{val_func_char_2}
  For $(t,x,v)\in [0,T]\times [-1,1]\times \R$, consider the optimal control problem:
  \begin{equation}
    \label{eq:tilde_phi}
    \tilde\phi(t,x,v)=\inf \frac 1 q \int_t^T |\xi''(s)|^q ds,
  \end{equation}
  where the infimum is taken on the trajectories $\xi\in W^{2,q}(t,T)$ such that $\xi(t)=x$, $\xi'(t)=v$, and $\xi(s)\in [-1,1]$ for all $s\in [t,T]$.
  There holds
  \begin{equation}  \label{eq:tilde_phi_2}
    \tilde  \phi(t,x,v) =\left\{
\begin{array}[c]{rcl}
    \frac {q^{q-1}} {(2q-1)^q}   \frac {|v|^{2q-1}} { (1+x)^{q-1}}, \quad &\hbox{if }&     v(T-t)\le    - \frac {2q-1}{q-1}(1+x)<0,
                           \\ 
 \frac 1 q \frac {(2q-1)^{q-1}} {(q-1)^{q-1}} \frac { \left|v+ \frac {1+x}{T-t} \right| ^{q}}{(T-t)^{q-1}},
      \quad &\hbox{if }&           -\frac {2q-1}{q-1}(1+x)  \le     v(T-t)\le - (1+x)    , \\
 0, \quad &\hbox{if }& -1-x \le v (T-t) \le  1-x,\\ 
  \frac 1 q \frac {(2q-1)^{q-1}} {(q-1)^{q-1}} \frac { \left(v+ \frac {x-1}{T-t} \right) ^{q}}{(T-t)^{q-1}},
      \quad &\hbox{if }&   1-x  \le  v( T-t)\le   \frac {2q-1}{q-1}(1-x)   ,\\ 
      \frac {q^{q-1}} {(2q-1)^q}   \frac {v^{2q-1}} { (1-x)^{q-1}}, \quad &\hbox{if }&  v(T-t)\ge  \frac {2q-1}{q-1}(1-x)> 0,     
\end{array}
\right.
  \end{equation}
  and $\tilde \phi$ is $C^1$ in $[0,T]\times(-1,1)\times \R$. The
restrictions of $\tilde \phi$ to  $[0,T)\times [-1,1)\times [0,+\infty) $ and to  $[0,T)\times (-1,1]\times (-\infty,0]$ are 
 pointwise 
  solutions of  \eqref{eq:HJB}.
\end{corollary}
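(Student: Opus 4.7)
The plan is to reduce the two-sided state constraint $\xi(s) \in [-1, 1]$ to the one-sided constraint already handled in Theorem~\ref{val_func_char}, the reduction depending on the sign of $v$. Because $\Xi^{\rm ad}$ splits as $[-1, 1) \times [0, +\infty) \cup (-1, 1] \times (-\infty, 0]$, I would split the analysis into two cases and then check that the resulting formulas agree on the overlap $\{v = 0\}$.

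First, for $(x, v)$ with $v \ge 0$ and $x \in [-1, 1)$, I would drop the lower constraint and study the relaxed problem in which only $\xi(s) \le 1$ is imposed. The monotonicity argument at the start of the proof of Theorem~\ref{val_func_char} applies verbatim: any minimizer $\xi$ of the relaxed problem must satisfy $\xi'(s) \ge 0$ for all $s$ (otherwise truncating the velocity to zero from the first instant at which $\xi'$ vanishes strictly reduces the acceleration cost). Hence $\xi$ is non-decreasing, so $\xi(s) \ge x \ge -1$ and the minimizer is automatically admissible for the two-sided problem. Since every two-sided-admissible trajectory is in particular one-sided-admissible, the two infima coincide. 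The translation $\hat\xi(s) = \xi(s) - 1$ then identifies the relaxed problem with the problem of Theorem~\ref{val_func_char} at initial position $x - 1 \in [-2, 0)$, giving $\tilde\phi(t, x, v) = \phi(t, x - 1, v)$; substituting $|x - 1| = 1 - x$ into \eqref{eq:phi=0}, \eqref{eq:phi_intermediate region}, \eqref{eq:phi=sing_part} produces the last three lines of \eqref{eq:tilde_phi_2}. The case $v \le 0$ is handled symmetrically by the reflection $\xi \mapsto -\xi$, which swaps the two constraints and reduces to the case just treated with $(x, v)$ replaced by $(-x, -v)$ and $1 - x$ replaced by $1 + x$; this yields the first three lines of \eqref{eq:tilde_phi_2}. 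On the overlap $v = 0$ both expressions vanish, and uniqueness of the minimizer in each case is inherited from Theorem~\ref{val_func_char}.

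For the regularity and the Hamilton-Jacobi statement, I would invoke Theorem~\ref{val_func_char}, which gives $C^1$ regularity and pointwise validity of \eqref{eq:HJB} for $\phi$ on $[0, T] \times (-\infty, 0) \times \R$. Via the affine changes of variable this furnishes a $C^1$ function on $[0, T] \times (-\infty, 1) \times \R$ that equals $\tilde\phi$ when $v \ge 0$, and a $C^1$ function on $[0, T] \times (-1, +\infty) \times \R$ that equals $\tilde\phi$ when $v \le 0$. Their common domain contains a neighborhood of $\{v = 0\}$ in $[0, T] \times (-1, 1) \times \R$, on which both are identically zero because any free-coasting trajectory remains in $[-1, 1]$ as soon as $|v|(T - t) < \min(1 - x, 1 + x)$. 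Gluing them across $\{v = 0\}$ then yields a $C^1$ function on $[0, T] \times (-1, 1) \times \R$ coinciding with $\tilde\phi$ and satisfying \eqref{eq:HJB} pointwise on each of the two half-spaces listed in the statement. The only genuinely nontrivial step is the monotonicity argument that justifies dropping the non-active constraint; the rest is change-of-variable bookkeeping.
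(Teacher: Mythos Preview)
Your proposal is correct and is exactly the approach the paper has in mind: the paper gives no explicit proof, simply labelling the result an ``easy corollary'' of Theorem~\ref{val_func_char}, and your reduction via translation/reflection together with the monotonicity argument (optimal trajectories with $v\ge 0$ have nonnegative velocity, hence never drop below $x\ge -1$) is precisely the intended derivation. The gluing across $\{v=0\}$ for the $C^1$ claim is also the natural argument.
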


\section{The state constrained  optimal control problem  \eqref{eq:6} in the case $n=1$}
\label{sec:sc_pb}

We now study the optimal control problem   \eqref{eq:6}  in the mono-dimensional case.
Recall that  $\Omega=(-1,1)$, $\Xi=[-1,1] \times \R$ and $ \Xi^{\rm{ad}}= [-1,1)\times [0,+\infty) \cup  (-1,1]\times (-\infty,0]$.
  
\subsection{Properties of the value function}

\begin{proposition}
  \label{sec:finite-horizon-state}
Let us set $\underline c=\inf_{(y,w)\in \Xi} g(y,w)$, $ \overline c=\sup_{(y,w)\in \Xi} g(y,w)$,
$\underline L= \inf_{(y,w,s)\in \Xi\times [0,T]} \ell(y,w,s)$ and $\overline L= \sup_{(y,w,s)\in \Xi\times [0,T]} \ell(y,w,s)$.

The value function $u$ can be bounded as follows:
\begin{equation}
  \label{eq:24}
\tilde \phi(t,x,v) + \underline L(T-t) +\underline c \le u(t,x,v)\le \tilde \phi(t,x,v) + \overline L(T-t) +\overline c  ,
\end{equation}
where  $\tilde \phi$ is the function appearing in Corollary~\ref{val_func_char_2}.
\end{proposition}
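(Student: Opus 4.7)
The proposition follows from a direct sandwich argument on the integrand and terminal cost, combined with the identification of the acceleration-cost infimum with $\tilde\phi$. The key observation is that for \emph{every} admissible trajectory $\xi\in \Gamma_t[x,v]$, the bounds on $\ell$ and $g$ give the pointwise inequality
\begin{equation*}
\frac{1}{q}\int_t^T |\xi''(s)|^q\,ds + \underline L\,(T-t) + \underline c \;\le\; J_t(\xi) \;\le\; \frac{1}{q}\int_t^T |\xi''(s)|^q\,ds + \overline L\,(T-t) + \overline c.
\end{equation*}

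The plan is therefore to take the infimum over $\xi\in \Gamma_t[x,v]$ in both inequalities. Since $\underline L(T-t)+\underline c$ and $\overline L(T-t)+\overline c$ are constants (independent of $\xi$), they factor out of the infimum, giving
\begin{equation*}
\inf_{\xi\in\Gamma_t[x,v]} \frac{1}{q}\int_t^T |\xi''|^q\,ds + \underline L(T-t) + \underline c \;\le\; u(t,x,v) \;\le\; \inf_{\xi\in\Gamma_t[x,v]} \frac{1}{q}\int_t^T |\xi''|^q\,ds + \overline L(T-t) + \overline c.
\end{equation*}

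The remaining step is to identify the common infimum with $\tilde\phi(t,x,v)$. For this I would note that $J_t(\xi)$ is $+\infty$ unless $\xi\in W^{2,q}(t,T)$, so the effective admissible set in \eqref{eq:6} is $\{\xi\in W^{2,q}(t,T):\xi(t)=x,\;\xi'(t)=v,\;\xi(s)\in[-1,1]\ \forall s\in[t,T]\}$, which is exactly the admissible set used to define $\tilde\phi$ in Corollary~\ref{val_func_char_2}. Hence the infimum of $\frac{1}{q}\int_t^T|\xi''|^q\,ds$ over $\Gamma_t[x,v]$ equals $\tilde\phi(t,x,v)$, yielding \eqref{eq:24}.

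\textbf{Main obstacle.} There is essentially no obstacle: the proof is a one-line monotonicity estimate followed by a trivial reduction to Corollary~\ref{val_func_char_2}. The only point requiring a line of verification is the compatibility of admissibility classes (every finite-cost trajectory in $\Gamma_t[x,v]$ lies in $W^{2,q}(t,T)$, and conversely every $W^{2,q}$ trajectory with the correct initial data and staying in $[-1,1]$ belongs to $\Gamma_t[x,v]$ since $W^{2,q}\hookrightarrow C^1$ for $q>1$), which is immediate from the definition \eqref{eq:1} of $\Gamma_t$.
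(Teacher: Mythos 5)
Your proof is correct and takes essentially the same approach as the paper's: both arguments rest on the pointwise bounds $\underline L\le \ell\le \overline L$, $\underline c\le g\le \overline c$, together with the identification of $\inf_\xi \frac1q\int_t^T|\xi''|^q$ with $\tilde\phi(t,x,v)$. The paper phrases this by using optimal (resp.\ $\epsilon$-optimal) trajectories as competitors, whereas you pass directly to infima via monotonicity; the difference is purely presentational, and your version is if anything slightly cleaner as it avoids invoking existence of minimizers.
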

\begin{proof}
It is clear that $(t,x,v)\mapsto \tilde \phi(t,x,v)+ \overline c+ \overline L  (T-t)$ is the value  of the optimal control problem consisting of minimizing the 
cost $ \int_t^T (\frac 1 q|\xi''(s)|^q+\overline L) ds +   \overline c$ on the trajectories $\xi \in W^{2,q}(t,T)$ such that $-1\le \xi(s)\le 1$ for all times $s$  and that $\xi(t)=x$, $\xi'(t)=v$. 
\\
If $\xi$ is an optimal trajectory for this latter problem (we know that it exists), it is clearly admissible and suboptimal for the optimal control problem introduced in the beginning of Section~\ref{sec:sc_pb}.
Hence,
\begin{equation*}
  \begin{split}
  u(t,x,v)& \le  \frac 1 q \int_t^T |\xi''(s)|^q ds +\int_t^T \ell( \xi(s), \xi'(s),s) ds  +g(
\xi(T),\xi'(T) ) \\
  &\le  \frac 1 q \int_t^T |\xi''(s)|^q ds  + (T-t) \overline L + \overline c
\le  \tilde \phi(t,x,v)+ (T-t) \overline L + \overline c,
  \end{split} 
\end{equation*}
and we have proved  the right-hand side of (\ref{eq:24}).

Similarly $(t,x,v)\mapsto \tilde \phi(t,x,v)+ \underline c+ \underline L  (T-t)$ is the value  of the optimal control problem consisting of minimizing the 
cost $ \int_t^T (\frac 1 q|\xi''(s)|^q+\underline L) ds +   \underline c$ on the trajectories $\xi \in W^{2,q}(t,T)$ such that $-1\le \xi(s)\le 1$ for all times $s$  and that $\xi(t)=x$, $\xi'(t)=v$. 
\\
If $\xi$ is an $\epsilon$-optimal trajectory for for the optimal control problem introduced in the beginning of Section~\ref{sec:sc_pb},
 it is  an admissible competitor for the latter problem, hence
\begin{equation*}
  \begin{split}
 \tilde \phi(t,x,v)+ (T-t) \underline L + \underline c &\le   \frac 1 q \int_t^T |\xi''(s)|^q ds  + (T-t) \underline L + \underline c
\\ &\le  \frac 1 q \int_t^T |\xi''(s)|^q ds +\int_t^T \ell( \xi(s), \xi'(s),s) ds  +g(
\xi(T),\xi'(T) )
\\ &\le  u(t,x,v)+\epsilon,
  \end{split} 
\end{equation*}
and the left-hand side of (\ref{eq:24}) is obtained by letting $\epsilon$ tend to $0$.
\end{proof}
The following proposition contains information on the behaviour of $u(t,x,v)$ for a fixed value of $t<T$, as
$x\to 1_-$ and $\frac {v(T-t)}{1-x}\to +\infty$.
\begin{proposition}
  \label{sec:finite-horizon-state-1}
  \begin{enumerate}
  \item   For a fixed time $t$,  $0\le t<T$,  
    \begin{displaymath}
 u(t, x,v)\sim \frac {q^{q-1}} {(2q-1)^q}   \frac { v ^{2q-1}} { (1-x)   ^{q-1}}   
    \end{displaymath}
as $x\to 1_-$  and   $ \frac { v ^{2q-1}} { (1-x)    ^{q-1}}\to \infty$ 
(note that this may occur even if $v\to 0_+$ 
and that this implies $\frac {v(T-t)}{1-x}\to +\infty$).
\item  For a fixed time $t$,  $0\le t<T$, and fixed positive number $C$,
  if $x\to 1_-$,  $\frac { v ^{2q-1}} { (1-x)    ^{q-1}} < C$  and  $\frac {v(T-t)}{1-x}\to +\infty$
(note that this  implies that $v\to 0$ and that this situation occurs if  $ c< \frac { v ^{2q-1}} { (1-x)    ^{q-1}} < C$ for a given constant $c<C$),
   then
 \begin{displaymath}
    u(t, x,v)\le \frac {q^{q-1}} {(2q-1)^q}   \frac { v ^{2q-1}} { (1-x)    ^{q-1}} +u(t,1,0) +o(1),    
   \end{displaymath}
where $o(1)$ is a  quantity that tends to $0$.
 \end{enumerate}
\end{proposition}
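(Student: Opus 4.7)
The plan is to dispatch part~(1) as a direct corollary of the two-sided bound of Proposition~\ref{sec:finite-horizon-state} combined with the explicit formula of Corollary~\ref{val_func_char_2}, and to prove part~(2) by constructing an admissible competitor via concatenation.

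For part~(1), Proposition~\ref{sec:finite-horizon-state} gives a uniform bound $|u(t,x,v)-\tilde\phi(t,x,v)|\le M(T-t)+M$. Writing $A:=\frac{v^{2q-1}}{(1-x)^{q-1}}$, a short calculation yields $\frac{v}{1-x}=A^{1/(2q-1)}(1-x)^{-q/(2q-1)}$, so the hypothesis $A\to\infty$ with $x\to 1_-$ forces $\frac{v(T-t)}{1-x}\to\infty$. Hence $(t,x,v)$ lies eventually in the last regime of Corollary~\ref{val_func_char_2}, where $\tilde\phi(t,x,v)=\frac{q^{q-1}}{(2q-1)^q}\frac{v^{2q-1}}{(1-x)^{q-1}}\to\infty$. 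Dividing the two-sided bound by $\tilde\phi$ yields the asymptotic equivalence.

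For part~(2), set $\tau:=\frac{(2q-1)(1-x)}{(q-1)v}$; the hypothesis $\frac{v(T-t)}{1-x}\to\infty$ forces $\tau\to 0$, so $t+\tau<T$ eventually and $(t,x,v)$ lies in the last regime of Corollary~\ref{val_func_char_2}. On $[t,t+\tau]$ I would use the optimal trajectory $\xi^1$ attaining $\tilde\phi(t,x,v)$: by Proposition~\ref{sec:prop_1}(iii) transplanted to the right boundary by translation, taken with terminal velocity $w=0$ at the critical time $\theta=\tau$ separating the intermediate and large-$\theta$ regimes, $\xi^1$ brakes monotonically from velocity $v$ to velocity $0$, reaches exactly $\xi^1(t+\tau)=1$ with $(\xi^1)'(t+\tau)=0$, stays in $[x,1]$ throughout, and has kinetic cost equal to $\frac{q^{q-1}}{(2q-1)^q}\frac{v^{2q-1}}{(1-x)^{q-1}}$. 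On $[t+\tau,T]$ I would take any $\xi^2\in\Gamma_{t+\tau}^{\rm opt}[1,0]$, which exists because $(1,0)\in\Xi^{\rm ad}$. The resulting concatenation is $C^1$ (both pieces match at $(1,0)$), lies in $W^{2,q}(t,T)$, and has total cost at most $\tilde\phi(t,x,v)+M\tau+u(t+\tau,1,0)$, with $M\tau$ bounding the running cost on $[t,t+\tau]$ and the last term covering kinetic, running and terminal contributions on $[t+\tau,T]$.

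To finish, I would invoke Lemma~\ref{sec:bounds_and_continuity} with $\Theta=\{(1,0)\}$, which trivially satisfies \eqref{eq:8}, to obtain continuity of $t'\mapsto u(t',1,0)$ on $[0,T)$ and hence $u(t+\tau,1,0)\to u(t,1,0)$ as $\tau\to 0$. The step requiring the most care will be verifying that the minimizer of $\tilde\phi(t,x,v)$ in this regime hits the boundary $x=1$ with zero velocity \emph{precisely} at time $t+\tau$; this structural property, which makes the concatenation seamless, comes directly from the last two regimes of Proposition~\ref{sec:prop_1} and the characterization in Theorem~\ref{val_func_char}. The remaining arguments are bookkeeping of cost terms and a continuity invocation.
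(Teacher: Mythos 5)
Your proof is correct and follows essentially the same route as the paper: part (1) by sandwiching $u$ between $\tilde\phi$ plus bounded terms (Proposition \ref{sec:finite-horizon-state}) and reading off the last regime of Corollary \ref{val_func_char_2}, and part (2) by concatenating the explicit braking trajectory of Proposition \ref{sec:prop_1} (with $w=0$ at the critical time $\theta=\frac{(2q-1)(1-x)}{(q-1)v}$, which reaches $(1,0)$ exactly with kinetic cost $\frac{q^{q-1}}{(2q-1)^q}\frac{v^{2q-1}}{(1-x)^{q-1}}$) with a continuation from $(1,0)$. The only cosmetic difference is in the tail: the paper time-shifts an optimal trajectory for $u(t,1,0)$ onto $[t+\theta,T]$, whereas you take an optimal trajectory for $u(t+\theta,1,0)$ and invoke Lemma \ref{sec:bounds_and_continuity} (legitimately, with $\Theta=\{(1,0)\}$) to get $u(t+\theta,1,0)\to u(t,1,0)$; both produce the same $o(1)$.
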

\begin{proof}
  To prove the first point, it is enough to combine (\ref{eq:24}) and
  (\ref{eq:tilde_phi_2}) in the case when $v(T-t)\ge  \frac {2q-1}{q-1}(1-x)> 0$.

Let us prove the second point: 
consider sequences $(x_n)_{n\in \N} $ and  $(v_n)_{n\in \N} $ such that  $x_n\to 1_-$,  $ \frac { v_n ^{2q-1}} { (1-x_n)^{q-1}} < C$  and  $\frac {v_n(T-t)}{1-x_n}\to +\infty$.
Consider the function $\eta$
  constructed in the proof of Proposition \ref{sec:prop_1} when $T$ is replaced with $T-t$,
    $x=x_n-1$,  $w=0$,  $\theta_n=  \frac {(2q-1)(1-x_n)}{(q-1)v_n}$.
  Let us set $\tilde t_n = t+\theta_n$,  $\tilde \eta_n(s)= \eta(s-t)$ and $\tilde \xi_n(s)= x_n +\int_t^s \tilde \eta_n(\tau) d\tau$ for $s\in [t, \tilde t_n]$. It is clear that
  $(\tilde \xi_n (t), \tilde \eta_n(t))= (x_n, v_n)$, that $\tilde \eta_n(\tilde t_n)=0$,  $\tilde \xi_n (\tilde t_n)=1$ and that $\frac 1 q \int_t^{\tilde t_n} |\tilde \eta'(\tau)|^q d\tau=   \frac {q^{q-1}} {(2q-1)^q}   \frac { v ^{2q-1}} { (1-x_n)   ^{q-1}}$. Observe also that
$\lim_{n\to \infty }  \tilde t_n =t$.
It is then possible to extend $\tilde \eta_n$ and $\tilde \xi_n$ to $[\tilde t_n, T]$ by setting
$\tilde \xi_n(\tau)=\xi( \tau -\tilde t_n +t)$ where $\xi$ is  an optimal trajectory for $u(t, 1,0)$.
We deduce that the cost associated to the trajectory $\tilde \xi_n$ is $\frac {q^{q-1}} {(2q-1)^q}   \frac { v_n ^{2q-1}} { (1-x_n)   ^{q-1}} +u(t,1,0) + o(1)$, and therefore
  \begin{equation}
  \label{eq:25}
u(t,x_n,v_n)\le
 \frac {q^{q-1}} {(2q-1)^q}   \frac { v_n ^{2q-1}} { (1-x_n)    ^{q-1}} +u(t,1,0) + o(1).
\end{equation}
 \end{proof}

\subsection{Closed graph properties}\label{closed_graph_1}
The closed graph property that follows plays an important role in the theory of relaxed equilibria in related MFGs:
\begin{proposition}\label{prop_closed_graph_1d}
 The graph of the multi-valued map
\begin{displaymath}
  \begin{array}[c]{rl}
    \Gamma^{\rm{opt}}: \; &  \Xi^{\rm ad}  \rightrightarrows  \Gamma,
    \\
    &    (x,v) \mapsto \Gamma^{\rm {opt}}[x,v]     
  \end{array}
\end{displaymath}
is closed, which means: for  any sequence $(x_i, v_i)_{i\in \N}$ such that for all $i\in \N$,   $(x_i, v_i)\in \Xi^{\rm ad}$ with $(x_i, v_i)\to (x,v)\in  \Xi^{\rm ad}$ as $i\to \infty$, consider a sequence     $(\xi^i)_{i\in \N}$ such that for all $i\in \N$, $(\xi^i) \in \Gamma^{\rm{opt}}[x_i, v_i]$; if $(\xi_i,\xi_i')$ tends to $(\xi,\xi')$ uniformly,  then $\xi\in \Gamma^{\rm{opt}}[x, v]$.
\end{proposition}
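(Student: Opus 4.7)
The strategy is to first establish admissibility of the limit $\xi$ from $(x,v)$ and then its optimality; the main obstacle is the discontinuity of $u$ at the tangential singular points $(\pm 1, 0)$ of $\Xi^{\rm{ad}}$, where by Proposition~\ref{sec:finite-horizon-state-1}(1) the cost $u(0, x_i, v_i)$ may even diverge. From the uniform $C^1$ convergence of $(\xi_i, \xi_i')$, one immediately obtains $\xi \in C^1([0,T];\R)$, $\xi(0) = x$, $\xi'(0) = v$, and $\xi(s) \in [-1,1]$. The regularity $\xi \in W^{2,q}(0,T)$ then follows, as soon as $\|(\xi^i)''\|_{L^q}$ is bounded, by extracting a subsequence with $(\xi^i)'' \rightharpoonup \psi$ weakly in $L^q$ and identifying $\psi = \xi''$ via integration by parts against $C_c^\infty((0,T))$, using the uniform convergence of $(\xi^i)'$. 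Since $\ell$ and $g$ are uniformly bounded, bounding $\|(\xi^i)''\|_{L^q}$ reduces to bounding $J(\xi^i) = u(0, x_i, v_i)$.

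\textbf{Non-singular limit.} When $(x, v) \notin \{(-1, 0), (1, 0)\}$, Lemma~\ref{sec:bounds_and_continuity} applied to $\{(x_i, v_i) : i \in \N\} \cup \{(x, v)\}$ (whose hypothesis \eqref{eq:8} is vacuous since the limit avoids the tangential singular set) gives continuity of $u$ at $(0, x, v)$, so $J(\xi^i) \to u(0, x, v)$. Weak lower semicontinuity of $\tfrac{1}{q}\|\cdot\|_{L^q}^q$ combined with dominated convergence for $\ell(\xi^i, (\xi^i)', \cdot)$ and continuity of $g$ yields $J(\xi) \leq \liminf J(\xi^i) = u(0, x, v)$; admissibility provides the reverse inequality, hence $\xi \in \Gamma^{\rm{opt}}[x, v]$.

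\textbf{Singular limit $(x, v) = (1, 0)$} (the case $(-1, 0)$ is symmetric): this is the delicate case, occurring when $v_i^{2q-1}/(1-x_i)^{q-1}$ does not tend to $0$. The plan is to localize via dynamic programming: for any $\tau \in (0, T)$, $\xi^i|_{[\tau, T]} \in \Gamma_\tau^{\rm{opt}}[\xi^i(\tau), (\xi^i)'(\tau)]$ with cost $u(\tau, \xi^i(\tau), (\xi^i)'(\tau))$. The competitor furnished by Proposition~\ref{sec:finite-horizon-state-1}(2)---rapid braking from $(x_i, v_i)$ to $(1, 0)$ in time $O((1-x_i)/v_i) \to 0$, then following an optimal trajectory of $u(0, 1, 0)$---together with the matching asymptotic lower bound of Proposition~\ref{sec:finite-horizon-state-1}(1), controls $u(\tau, \xi^i(\tau), (\xi^i)'(\tau))$ by $u(0, 1, 0) + o(1)$ as $i \to \infty$, for each fixed small $\tau > 0$. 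The non-singular-limit argument then applies on $[\tau, T]$, yielding $\xi \in W^{2,q}(\tau, T)$ with $J_\tau(\xi|_{[\tau, T]}) \leq u(0, 1, 0) + o_\tau(1)$ and a bound on $\|\xi''\|_{L^q(\tau, T)}$ that is uniform in $\tau$, so $\xi \in W^{2,q}(0, T)$. Letting $\tau \to 0^+$ in $J(\xi) = \tfrac{1}{q}\int_0^\tau |\xi''|^q\,ds + \int_0^\tau \ell(\xi, \xi', s)\,ds + J_\tau(\xi|_{[\tau, T]})$ then gives $J(\xi) \leq u(0, 1, 0)$, hence $\xi \in \Gamma^{\rm{opt}}[1, 0]$.

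The main obstacle is precisely the passage $\tau \to 0^+$ in the singular case, especially when $(\xi(\tau), \xi'(\tau)) = (1, 0)$ for all small $\tau > 0$---as in the case $\ell = g = 0$ of Corollary~\ref{val_func_char_2} where the limit is $\xi \equiv 1$---so that continuity of $u$ along the restricted trajectory is unavailable. The sharp competitor-based upper bound of Proposition~\ref{sec:finite-horizon-state-1}(2), used uniformly in $\tau$, is exactly what makes the argument close; it extends \cite[Lemma~4.6]{MR4444572} both to arbitrary $q > 1$ and to $\ell$ not required to penalize trajectories exiting the domain, as highlighted in point 1.(c) of the introduction.
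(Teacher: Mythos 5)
Your treatment of the non-singular limit $(x,v)\neq(\pm 1,0)$ is correct and matches the paper's (both defer to the earlier reference and use weak lower semicontinuity). The singular case, however, has a genuine gap, and the route you propose does not close as written.

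The central unproven claim is that, for a \emph{fixed} small $\tau>0$, one has $u(\tau,\xi^i(\tau),(\xi^i)'(\tau))\le u(0,1,0)+o(1)$ as $i\to\infty$. Via the dynamic programming principle this would require the lower bound
\begin{equation*}
\frac1q\int_0^\tau \bigl|(\xi^i)''\bigr|^q\,ds \;\ge\; \frac{q^{q-1}}{(2q-1)^q}\,\frac{v_i^{2q-1}}{(1-x_i)^{q-1}}\;-\;o(1),
\end{equation*}
which must cancel the singular term in the competitor upper bound of Proposition~\ref{sec:finite-horizon-state-1}(2). You cite Proposition~\ref{sec:finite-horizon-state-1}(1) as the ``matching asymptotic lower bound,'' but that statement concerns the value function $u$ itself (and only in the regime where the singular ratio diverges), not the $L^q$ cost of the acceleration on a fixed subinterval $[0,\tau]$. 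The correct tool is Proposition~\ref{sec:prop_1}: the cost on $[0,\tau]$ is bounded below by $I(\tau,w_i)$ with $w_i=\min_{[0,\tau]}(\xi^i)'$. If $w_i=0$, Lemma~\ref{lemma:ass_infimum0} gives exactly the needed bound, but you have no control on whether, or how fast, $w_i\to0$ for fixed $\tau$. If $w_i>0$, you would need the sharp two-sided estimate $\inf_\theta I(\theta,w_i)=\frac{q^{q-1}}{(2q-1)^q}\frac{v_i^{2q-1}}{(1-x_i)^{q-1}}+o(1)$ as $w_i/v_i\to0$, i.e.\ point~(ii) of Proposition~\ref{lemma:ass_infimum}, which is \emph{only proven for $q=2$} (see the remark following that proposition, which explicitly says the general-$q$ case ``seems challenging''). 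Since the present proposition is stated for all $q>1$, the fixed-$\tau$ localization route requires an ingredient the paper does not have.

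The paper's proof circumvents precisely this difficulty. Rather than fixing a time $\tau$, it works with the $i$-dependent first vanishing time $t_i$ of $\xi_i'$, which makes $w=0$ so that Lemma~\ref{lemma:ass_infimum0} gives the exact lower bound with no error term. It splits according to whether $\xi\not\equiv 1$ (Case~1, where $t_i$ exists and accumulates at the endpoint $\theta_1$ of the interval on which $\xi\equiv1$) or $\xi\equiv1$ (Case~2, where $\xi_i'$ may never vanish; there the proof caps the velocity at $2(1-x_i)/T$ from some time on to build a nearby admissible trajectory to which Proposition~\ref{sec:prop_1} applies with $w\ll v_i$). Your proposal does not make this case distinction and does not address the regime where $\xi_i'$ never vanishes. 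These are exactly the points that make the paper's argument work uniformly in $q>1$; you would need to incorporate them, or restrict to $q=2$ where the sharp estimate is available.
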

\begin{remark}
  It is worth noticing that in  Proposition \ref{prop_closed_graph_1d},
  even if $\|\xi^{i}\|_{W^{2,q}(0,T)}$ blows up as $i$ tends to $\infty$, the limiting trajectory (convergence in $C^1$), $\xi$, is optimal for $u(0,1,0)$ which, in particular,  implies that $\|\xi\|_{W^{2,q}(0,T)}<\infty$. Besides, it is precisely the fact that  $\|\xi^{i}\|_{W^{2,q}(0,T)}$ may blow up that brings difficulties in the proof.
\end{remark}

\begin{proof}
  This convergence stated in Proposition \ref{prop_closed_graph_1d}  has already been proved in \cite{MR4444572} in the three situations that follow
  \begin{enumerate}
  \item  $(x,v)\not=(-1,0)$ and $(x,v)\not=(1,0)$
  \item $(x,v)=(1,0)$ and 
    $\lim_{i\to \infty}\frac {(v_i)_+^{2q-1} }{|x_i-1|^{q-1}}=0$,
    with the convention that if $x_i=1$ and  $(v_i)_+=0$, then the quotient $\frac {(v_i)_+^{2q-1} }{|x_i-1|^{q-1}}$ takes the value $0$
  \item $(x,v)=(-1,0)$ and $\lim_{i\to \infty}\frac {(v_i)_-^{2q-1} }{|x_i+1|^{q-1}}=0$,
    with the convention that if $x_i=-1$ and  $(v_i)_-=0$, then  the quotient $\frac {(v_i)_-^{2q-1} }{|x_i-1|^{q-1}}$ takes the value $0$,
  \end{enumerate}
so we refer to  \cite[Proposition 3.9]{MR4444572} for these cases, in which $(\xi^i)_i$ is bounded in $W^{2,q}(0,T)$.

There remains to study the following two situations:
\begin{enumerate}
\item  $(x,v)=(1,0)$ and $\frac {(v_i)_+^{2q-1} }{|x_i-1|^{q-1}}\ge c$ for a given positive constant $c$
\item $(x,v)=(-1,0)$ and $\frac {(v_i)_-^{2q-1} }{|x_i+1|^{q-1}}\ge c$ for a given positive constant $c$
\end{enumerate}
Let us focus on the former since the latter can be addressed in the same manner.

Assume that $\frac {(v_i)_+^{2q-1} }{|x_i-1|^{q-1}}\ge c$ and that $(\xi_i,\xi_i')$ tends to $(\xi,\xi')$ uniformly, with $\xi(0)=1$ and $\xi'(0)=0$. Note that  $|v_i|/|x_i-1|$ tends to $+\infty$.  We make out two cases:
\begin{description}[style=unboxed,leftmargin=0cm]
\item[Case 1:] there exists $t>0$ such that $\xi(t)\not =1$.\\ Let $\theta_1$ be the largest real number such that $\xi(t)=1$ for all $t\in [0,\theta_1]$. There exists $\theta_2>\theta_1$ such that  $\xi(t)<1$ for $t\in (\theta_1, \theta_2]$ and for all $\epsilon>0$, there exists $\theta_{3,\epsilon}\in (\theta_1, \theta_1+\epsilon)$ such that $\xi'(\theta_{3,\epsilon})<0$. 
  Let $t_i\in (0,T)$ be the smallest time at which $\xi_i'$ vanishes. Since $\xi_i'$ tends to $\xi'$ uniformly, we know that  $t_i$ exists for $i$ large enough. After the extraction of a subsequence, we may suppose that $t_i$ converges to some $\bar t\in [0,T]$.
The uniform convergence of $\xi_i'$ implies that $\bar t \le \theta_1$.

Arguing   as in the proof of the second point in Proposition~\ref{sec:finite-horizon-state-1},
we find that
  \begin{equation}
    \label{eq:26}
J( \xi_i)= u(0,x_i,v_i)\le    \frac {q^{q-1}} {(2q-1)^q}   \frac { v_i ^{2q-1}} { (1-x_i)    ^{q-1}} +u(0,1,0) + o(1).
  \end{equation}
  Next, we know from Lemma \ref{lemma:ass_infimum0} that
  \begin{equation}
    \label{eq:27}
    \frac 1 q \int_0^{t_i} |\xi_i''(s)|^q ds \ge   \frac {q^{q-1}} {(2q-1)^q}   \frac { v_i ^{2q-1}} { (1-x_i)    ^{q-1}}.
  \end{equation}
 From the assumption on the uniform convergence of $(\xi_i, \xi_i')$ to $(\xi, \xi')$, we easily deduce that 
  \begin{equation}
    \label{eq:28}
    \begin{split}
&    \lim_{i\to \infty}  \int_0^T \ell(\xi_i(s),\xi_i'(s) ,s) 
  ds+g(\xi_i(T), \xi_i'(T)) \\ = &\int_0^T \ell(\xi(s),\xi'(s) ,s) 
  ds+g(\xi(T), \xi'(T)).     
    \end{split}
  \end{equation}
  Combining \eqref{eq:26}, \eqref{eq:27} and \eqref{eq:28} yields
  \begin{equation}
    \label{eq:29}
 \frac 1 q \int_{t_i}^T |\xi_i''(s)|^q ds
  \le  u(0,1,0) -  \int_0^T \ell(\xi(s),\xi'(s) ,s)  
ds- g(\xi(T), \xi'(T))+ o(1).
\end{equation}
Hence $\one_{[t_i,T]} \xi_i''$ is bounded in $L^q(0,T)$.
Since $\xi_i'(t_i)= 0$, $\one_{[t_i,T]} \xi_i''$ is the weak derivative of  $\one_{[t_i,T]} \xi_i'$, which is itself the weak derivative of $\tilde \xi_i= \xi_i(t_i)  \one_{[0,t_i)}+  \one_{[t_i,T]} \xi_i$. Note that $\lim_{i\to \infty} \xi_i(t_i)=0$, because $\bar t \le \theta_1$. Therefore, up to  the extraction of a subsequence, we may assume that $\tilde  \xi_i$ converges weakly in $W^{2,q}(0,T)$. The weak limit can be nothing  but $\xi$. Hence $\xi\in   W^{2,q}(0,T)$. 

Moreover
\begin{equation}
    \label{eq:30}
\int_{0}^T |\xi''(s)|^q ds  \le \liminf_{i\to \infty}  \int_{0}^T |\tilde \xi_i''(s)|^q ds=   \liminf_{i\to \infty}  \int_{t_i}^T |\xi_i''(s)|^q ds.  
\end{equation}
We deduce from  \eqref{eq:29} and \eqref{eq:30} that
\begin{equation}
   \label{eq:31}
\frac 1 q   \int_{0}^T |\xi''(s)|^q ds + \int_0^T \ell(\xi(s),\xi'(s) ,s) 
  ds+ g(\xi(T),\xi'(T))\le u(0,1,0),
\end{equation}
which implies that  $ \xi\in \Gamma^{\rm{opt}}[1, 0]$.
  
\item [Case 2:] $\xi(t)=1$ for all $t\in [0,T]$.\\
  If after the extraction a subsequence, there exists $t_i$ for all $i$ such that $\xi_i'(t_i)=0$, then we can reproduce the arguments in case 1 and conclude.

  Let us now suppose that for $i$ large enough, $\xi_i'$ is positive on $[0,T]$.
We know that $\lim_{i\to \infty}  \int_0^T \ell(\xi_i(s),\xi_i'(s) ,s) 
  ds+ g(\xi_i(T), \xi_i'(T))=  \int_0^T \ell(1,0 ,s) 
 ds+ g(1,0)$. 
There exists  $t_i\in (0,T)$ such that $\xi_i'(t_i)=  2(1-x_i)/ T$, otherwise the trajectory would exit the domain before $T$.
 Let us define the modified  trajectory $\tilde \xi_i\in C^1$ as follows:
 \begin{eqnarray}
   \tilde \xi_i (t)= \xi_i(t) \quad \hbox{if } t\le t_i,\\
   \tilde \xi_i'(t)= \min( \xi_i'(t), 2(1-x_i)/ T)  \quad \hbox{if } t \ge t_i.
 \end{eqnarray}
 The modified trajectory is admissible and corresponds to the same initial condition $(x_i,v_i)$, and both  $(\xi_i,\xi'_i)$ and  $(\tilde \xi_i,\tilde\xi'_i)$ tend to $(1,0)$ uniformly in $[0,T]$, and  $\int_0^T  |\tilde \xi_i''(t)|^q dt\le \int_0^T  | \xi_i''(t)|^q dt$.
 Therefore,
 \begin{equation}
   \label{eq:32}
   \begin{split}
     &u(0,x_i,v_i)\\ \le &\frac 1  q \int_0^T  |\tilde \xi_i''(t)|^q dt
+
    \int_0^T \ell(\tilde \xi_i(s),\tilde \xi_i'(s) ,s) 
   ds+ g(\tilde \xi_i(T),\tilde  \xi_i'(T))
\\
     \le&\frac 1  q \int_0^T  | \xi_i''(t)|^q dt +
    \int_0^T \ell(\tilde \xi_i(s),\tilde \xi_i'(s) ,s) 
  ds+ g(\tilde \xi_i(T),\tilde  \xi_i'(T))\\
  \le &u(0,x_i,v_i)+o(1).
   \end{split}
 \end{equation}

 By taking $x=x_i-1$, $\theta=T$, $v=v_i$ and $w=\tilde \xi'_i(T)\le  2(1-x_i)/ T    \ll v_i$ in  Proposition \ref{sec:prop_1}, we see that
  \begin{displaymath}
    \frac 1 q \int_0^T |\tilde \xi_i''(s)|^q ds \ge \frac {q^{q-1}} {(2q-1)^q}   \frac {v_i^{2q-1}} { |1-x_i|^{q-1}}+o(1).
  \end{displaymath}
  Therefore
  \begin{equation}
    \label{eq:33}
     u(0,x_i,v_i) \ge \frac {q^{q-1}} {(2q-1)^q}   \frac {v_i^{2q-1}} { |1-x_i|^{q-1}}+
 \int_0^T \ell(1,0 ,s) 
  ds+ g(1,0)+o(1).
  \end{equation}
 We deduce from  \eqref{eq:26} and  \eqref{eq:33} that $u(0,1,0)= \int_0^T \ell(1,0 ,s) 
  ds+ g(1,0)$, which  implies that  $ \xi\in \Gamma^{\rm{opt}}[1, 0]$.
\end{description}
\end{proof}

\section{ The state constrained  optimal control problem  \eqref{eq:6} in the case $n>1$ and $q=2$}
\label{sec:multid}

Here, we focus on running costs which are quadratic in the acceleration because some  arguments  will require the sharp estimate  \eqref{eq:20} on $\inf_{\theta} I(\theta,w)$ contained in point (ii) in Proposition \ref{lemma:ass_infimum}.

\paragraph{\bf Preliminary facts}
  Lemma  \ref{def:local_charts} below is devoted to local charts that will enable us to rely on the results obtained in Section \ref{sec:auxil_opt_cont_pb}  for one-dimensional problems. We give its proof for completeness although the main ideas come from \cite[Section 14.6]{MR1814364}.
  \begin{lemma} \label{def:local_charts}
    For all $x\in \partial \Omega$, there exist an open neighborhood $V_x$ of $x$ in $\R^n$, a positive number $R_x$ and a $C^2$-diffeomorphism $\Phi_x$ from $V_x$ onto $B(0, R_x)$ with the following properties:
  \begin{enumerate}
  \item $\Phi_x(x)=0$ and for all $z\in V_x$, the $n^{\rm{th}}$ coordinate of $\Phi_x(z)$ is $d(z)$, i.e. $e_n\cdot \Phi_x(z)= d(z)$, where $e_n$ is the $n^{\rm{th}}$ vector of the canonical basis.
  \item $ \Phi_x|_{V_x\cap \Omega}$ is a $C^2$-diffeomorphism  from $V_x\cap \Omega$ onto $ B_-(0, R_x)=B(0, R_x)\cap \{x_n<0\}$, and $ \Phi_x|_{V_x\cap \partial\Omega}$ is a $C^2$-diffeomorphism  from $V_x\cap \partial \Omega$ onto $B(0, R_x)\cap \{x_n=0\}$.
  \item The inverse  $\Psi_x$ of $\Phi_x$  is a $C^2$-diffeomorphism  from $B(0, R_x)$ onto $V_x$ and $\Psi_x|_{ \{y\in B(0, R_x); y_n=0\}} $ is a $C^2$-diffeomorphism onto $V_x\cap \partial \Omega$.
    \item For all $y\in  B(0, R_x)$ such that $y_n=0$,
 \begin{equation}\label{eq:37}
  D\Psi_x(y+te_n) e_n=  n(\Psi_x(y)).   
 \end{equation}     
 Also,
\begin{equation}
  \label{eq:38}
\nabla d(z)=D\Phi_x^T (z) e_n, \quad \hbox{for all } z\in V_x.
\end{equation}
 In particular, $n(x)=D\Phi_x^T (x) e_n$.
  \end{enumerate}
  \end{lemma}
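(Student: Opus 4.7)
The plan is to build $\Psi_x$ first (as a tubular neighborhood chart) and then define $\Phi_x$ as its inverse. Fix $x\in\partial\Omega$. Because $\partial\Omega$ is $C^3$, the signed distance $d$ is $C^3$ in a neighborhood $N$ of $\partial\Omega$, and in $N$ the nearest point projection $\pi:N\to\partial\Omega$ is well defined and of class $C^2$, with the identity $z=\pi(z)+d(z)\,n(\pi(z))$ for all $z\in N$. This is the standard tubular neighborhood result (see \cite{MR1814364}).

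\medskip

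\noindent First I would fix a local $C^2$ parameterization of $\partial\Omega$ near $x$, i.e.\ an open set $U\subset\R^{n-1}$ containing $0$ and a $C^2$-embedding $\psi:U\to\partial\Omega$ with $\psi(0)=x$, whose image $\psi(U)$ is a neighborhood of $x$ in $\partial\Omega$. After an orthogonal change of coordinates in $\R^n$ I may assume $n(x)=e_n$ and that $\partial_1\psi(0),\ldots,\partial_{n-1}\psi(0)$ span $\{e_n\}^\perp$. Now define, for $y=(y',y_n)$ in a neighborhood of $0$ in $\R^n$,
\begin{equation*}
\Psi_x(y',y_n)=\psi(y')+y_n\,n(\psi(y')).
\end{equation*}
Since $n\circ\psi$ is $C^2$, so is $\Psi_x$. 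Its Jacobian at $0$ has columns $\partial_1\psi(0),\ldots,\partial_{n-1}\psi(0),n(x)$, which is a basis of $\R^n$, so $D\Psi_x(0)$ is invertible. By the inverse function theorem, $\Psi_x$ is a $C^2$-diffeomorphism from some ball $B(0,R_x)$ (after shrinking) onto an open neighborhood $V_x$ of $x$ in $\R^n$. Let $\Phi_x:=\Psi_x^{-1}$.

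\medskip

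\noindent Next I would verify the defining property $e_n\cdot\Phi_x(z)=d(z)$. For $y=(y',y_n)\in B(0,R_x)$ with $|y_n|$ small, the point $z:=\Psi_x(y)$ lies in the tubular neighborhood $N$, and by uniqueness of the nearest point expansion $z=\pi(z)+d(z)n(\pi(z))$ one reads off $\pi(z)=\psi(y')$ and $d(z)=y_n$. Hence $e_n\cdot\Phi_x(z)=y_n=d(z)$, which gives property 1. Property 2 follows since the sign of the $n^{\rm th}$ coordinate of $\Phi_x$ equals the sign of $d$, so $y_n<0$ corresponds exactly to $z\in\Omega$, and $y_n=0$ to $z\in\partial\Omega$. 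Property 3 is just the statement that $\Psi_x=\Phi_x^{-1}$, restricted to $\{y_n=0\}$ which parameterizes $V_x\cap\partial\Omega$ by construction.

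\medskip

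\noindent Finally I would check property 4. Differentiating $\Psi_x(y'+te_n)=\psi(y')+t\,n(\psi(y'))$ with respect to $t$ gives $D\Psi_x(y'+te_n)e_n=n(\psi(y'))$; when $y_n=0$ this equals $n(\Psi_x(y))$, proving \eqref{eq:37}. For \eqref{eq:38}, differentiate the identity $d(z)=e_n\cdot\Phi_x(z)$ to obtain $\nabla d(z)=D\Phi_x(z)^Te_n$; evaluating at $z=x$ yields $n(x)=D\Phi_x(x)^Te_n$.

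\medskip

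\noindent The construction is essentially routine; the only substantive ingredient is the tubular neighborhood identity that makes $d(\Psi_x(y',y_n))=y_n$, and this relies on the $C^3$ regularity of $\partial\Omega$ through the smoothness of $\pi$ and of $n\circ\psi$. The rest is linear algebra plus the inverse function theorem.
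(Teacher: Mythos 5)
Your proof is correct and follows essentially the same route as the paper: both construct the normal chart $\Psi_x(y',y_n)=(\text{local parameterization of }\partial\Omega)(y')+y_n\, n(\cdot)$, invert it with the inverse function theorem, and identify the $n^{\rm th}$ coordinate of the inverse with the signed distance via the unique nearest-point decomposition $z=\pi(z)+d(z)\,n(\pi(z))$, then verify properties 1--4 by direct differentiation. The only cosmetic differences are that the paper uses the explicit graph parameterization $y'\mapsto (x'+y',x_n+\varphi(y'))$ with $\varphi\in C^3$ and computes the Jacobian $\mathrm{diag}(1-\kappa_1 d,\dots,1-\kappa_{n-1}d,1)$ along the whole normal segment, thereby also re-deriving the local $C^3$ regularity of $d$ and $C^2$ regularity of $\pi$, whereas you take that regularity as given from the cited tubular neighborhood result and check invertibility only at the origin.
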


  \begin{proof}
  For $h>0$, set $\omega_h=\{z \in \R^n , |d(z )| < h\}$.
    It is proven in \cite[Section 14.6]{MR1814364} that there exists  $\mu>0$ smaller than the inverse of all principal curvatures of $\partial \Omega$ at all points of $\partial \Omega$,
  such  for all $z\in \omega_\mu$, there exists a unique point $\pi(z)\in \partial \Omega $ such that $|d(z)|=|z-\pi(z)| $. The points $z$ and $\pi(z)$ are related by
  \begin{displaymath}
    z= \pi(z)+ d(z) n(\pi(z)).
  \end{displaymath}
  
  Given  $x\in \partial \Omega$, it is always possible to rotate the coordinates in such a way that the  coordinate  $z_n$ of a  point $z\in \R^ n$ lies in the direction of $n(x)$. Let us define $z'\in \R^{n-1}$ by $z=(z', z_n)$. The tangent subspace to $\partial \Omega$ at $x$  is therefore $T_x(\partial \Omega)= \{ z\in \R^n \;:\; z_n=0\}$. From the assumptions, there exists an open  neighborhood $\cN_x$ of $x$ and  $\varphi \in C^3(  T_x(\partial \Omega);\R  ) $ such that $\varphi(0) =0$, $D\varphi(0)=0$, and for all $z\in \cN_x\cap \partial \Omega$,   $ z_n -x_n= \varphi(z'-x')$.
\\  
Let us set $\cN'_x= (\cN_x -x) \cap  T_x(\partial \Omega)$ and define
$\Psi_x: \cN'_x\times \R  \to \R^n$ by $\Psi_x (y', d) = y+ d n(y)$ with $y=(x'+ y',x_n+ \varphi(y'))$.  The map $\Psi_x $ is of class $C^2$, and its Jacobian matrix at $(0, d)$ is given by $D\Psi_x(0, d) = \rm{diag}(1-\kappa_1 d, \cdots, 1-\kappa_{n-1} d, 1)$, where $\kappa_i$ are the principal curvatures of $\partial \Omega$ at $x$. The Jacobian of $\Psi_x$ at $(0, d) $ is therefore $\prod_{i=1}^{n-1} (1-\kappa_i d)>0$ if $|d|<\mu$. It follows from the inverse mapping theorem that there exists a neighborhood $\cM_x$ of $x$ such that the map $z\mapsto (y'(z), d(z))$ is  a  $C^2$-diffeomorphism from $\cM_x$ to an open neighborhood of $0_{\R^n}$ contained in $ \cN'_x\times \R$. We deduce that  $\pi(z)=(x'+y'(z), x_n+ \varphi(y'(z)))$ for $z\in \cM_x$.
Hence, $z\mapsto \pi(z)$ is also in $C^2(\cM_x)$. This implies that $\nabla d (z)= n(\pi(z))$ is of class $C^2$, hence $d\in C^3(\cM_x)$.

Note that by a slight modification of the previous argument,  see  \cite[Section 14.6]{MR1814364}, one can actually prove that
$\pi\in C^2(\omega_\mu)$ and $d\in C^3(\omega_\mu)$ for $\mu$ defined above, but we shall not need this result.

 Finally, for $x\in \partial \Omega$,  it is possible to choose a positive radius $R_x$ and
 an open neighborhood   $V_x\subset \cM_x$ of $x$  such that $\Psi_x$ defines a $C^2$-diffeomorphism from $B(0,R_x)$ onto $V_x$, and denoting by  $\Phi_x$ its inverse, $\Phi_x$ and $\Psi_x$ satisfy points 1. to 4.
\end{proof}

 \paragraph{\bf Closed graph properties and singularities of $u$ }
\label{sec:clos-graph-prop}
\begin{proposition}\label{prop:clos-graph-prop}
  The graph of the multi-valued map
\begin{displaymath}
  \begin{array}[c]{rl}
    \Gamma^{\rm{opt}}: \; &\Xi^{\rm{ad}} \rightrightarrows  \Gamma,
    \\
    &    (x,v) \mapsto \Gamma^{\rm {opt}}[x,v]     
  \end{array}
\end{displaymath}
is closed, which means: for  any sequence $(x^i, v^i)_{i\in \N}$ such that for all $i\in \N$,   $(x^i, v^i)\in \Xi^{\rm{ad}} $ with $(x^i, v^i)\to (x,v)$ as $i\to \infty$, consider a sequence     $(\xi^i)_{i\in \N}$ such that for all $i\in \N$, $\xi^i \in \Gamma^{\rm{opt}}[x^i, v^i]$; if $(\xi^i,\frac {d\xi^i} {dt} )$ tends to $(\xi,\frac {d\xi} {dt})$ uniformly,  then $\xi\in \Gamma^{\rm{opt}}[x, v]$.
\end{proposition}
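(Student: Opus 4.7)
The plan is to reduce to the one-dimensional proof of Proposition \ref{prop_closed_graph_1d}, using the local charts of Lemma \ref{def:local_charts} to straighten the boundary near any singular point of the limit trajectory. After extracting a subsequence, either the ratio
\begin{displaymath}
\frac{(v^i\cdot \nabla d(x^i))_+^{2q-1}}{|d(x^i)|^{q-1}}
\end{displaymath}
remains bounded by some $r>0$, in which case Proposition \ref{sec:bounds_opt_traj_nd} provides a uniform $W^{2,q}(0,T;\R^n)$-bound on $(\xi^i)$ and the conclusion $\xi\in\Gamma^{\rm opt}[x,v]$ follows from standard weak compactness and the lower semi-continuity of the cost functional, using the continuity of $\ell$ and $g$ along the $C^1$-convergence; or the ratio tends to $+\infty$, which forces $x\in\partial\Omega$, $v\cdot n(x)=0$, and requires the refined argument below.

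In this singular regime, necessarily $v^i\cdot \nabla d(x^i)>0$ and $d(x^i)<0$ for $i$ large. Using the chart $\Phi_x$ of Lemma \ref{def:local_charts}, the normal coordinate $\rho^i(s)=d(\xi^i(s))$ satisfies the state constraint $\rho^i(s)\le 0$ with initial data $(\rho^i(0),(\rho^i)'(0))=(d(x^i), v^i\cdot\nabla d(x^i))$ falling into the regime of Proposition \ref{lemma:ass_infimum}(ii). The strategy is to mimic Cases 1 and 2 of the proof of Proposition \ref{prop_closed_graph_1d} by framing $J(\xi^i)=u(0,x^i,v^i)$ between tight upper and lower bounds for the braking cost.

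For the upper bound, in the spirit of the proof of Proposition \ref{sec:finite-horizon-state-1}, I construct a competitor $\tilde\xi^i$ which on a short interval $[0,\bar t_i]$, with $\bar t_i\to 0$, connects $(x^i,v^i)$ to $(x,v)$ inside $\bar\Omega$: its normal component follows the explicit one-dimensional minimizer from Proposition \ref{sec:prop_1}, while the tangential components are driven by a short polynomial interpolation whose $L^2$-cost is $o(1)$ since the tangential parts of $(x^i,v^i)$ tend to those of $(x,v)$. On $[\bar t_i,T]$, $\tilde\xi^i$ follows an optimal trajectory for $u(0,x,v)$ shifted by $\bar t_i$. The sharp estimate \eqref{eq:20} then yields
\begin{displaymath}
u(0, x^i, v^i) \le \frac{2}{9}\,\frac{(v^i\cdot \nabla d(x^i))_+^3}{|d(x^i)|} + u(0,x,v) + o(1).
\end{displaymath}
For the lower bound, let $\bar t_i$ be the first time at which the normal velocity $(\xi^i)'\cdot \nabla d(\xi^i)$ vanishes (it exists for $i$ large since the $C^1$-limit $\xi$ remains in $\bar\Omega$). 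Since $(\rho^i)''=\nabla d(\xi^i)\cdot (\xi^i)''+D^2d(\xi^i)\bigl((\xi^i)',(\xi^i)'\bigr)$, and the curvature term is $O(1)$ by the uniform $C^1$-convergence while the leading order of the first term must diverge, Proposition \ref{lemma:ass_infimum}(ii) applied to $\rho^i$ on $[0,\bar t_i]$ gives
\begin{displaymath}
\frac{1}{q}\int_0^{\bar t_i} |(\xi^i)''(s)|^q\,ds \ge \frac{2}{9}\,\frac{(v^i\cdot \nabla d(x^i))_+^3}{|d(x^i)|} + o(1).
\end{displaymath}

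Subtracting the two estimates and using the convergence of the running and terminal costs along $(\xi^i,(\xi^i)')\to(\xi,\xi')$, I conclude that $\int_{\bar t_i}^T |(\xi^i)''|^q\,ds$ is bounded. Extending the restriction of $\xi^i$ to $[\bar t_i,T]$ by the affine segment $\xi^i(\bar t_i)+(s-\bar t_i)(\xi^i)'(\bar t_i)$ on $[0,\bar t_i]$ yields a sequence bounded in $W^{2,q}(0,T;\R^n)$, whose weak limit is forced to be $\xi$; lower semi-continuity then gives $J(\xi)\le u(0,x,v)$, hence $\xi\in\Gamma^{\rm opt}[x,v]$. The sub-case in which $\xi(s)\in\partial\Omega$ for every $s\in[0,T]$ is handled in the spirit of Case 2 of Proposition \ref{prop_closed_graph_1d}, by capping the normal component of $(\xi^i)'$ along the boundary; the same sharp estimate then forces $u(0,x,v)$ to coincide with the trivial cost. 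The main difficulty, and the reason for the restriction $q=2$, is that the matching between upper and lower bounds requires the $O$-sharpness of Proposition \ref{lemma:ass_infimum}(ii) rather than the merely asymptotic ($\sim$) estimate of point (i); the curvature of $\partial\Omega$ only enters through lower-order corrections absorbed in the $o(1)$ terms.
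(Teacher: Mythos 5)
Your architecture follows the paper's (dichotomy on the ratio, explicit one-dimensional braking competitor for the upper bound, scalar reduction plus Proposition \ref{lemma:ass_infimum}(ii) for the lower bound, truncation and weak compactness), but two steps do not hold as written. First, your dichotomy is drawn at the wrong threshold. For subsequences along which the ratio $(v^i\cdot\nabla d(x^i))_+^{2q-1}/|d(x^i)|^{q-1}$ stays bounded you invoke Proposition \ref{sec:bounds_opt_traj_nd}, weak compactness and lower semicontinuity; but this only yields $J(\xi)\le \liminf_i u(0,x^i,v^i)$, and to conclude $\xi\in\Gamma^{\rm opt}[x,v]$ you need $\liminf_i u(0,x^i,v^i)\le u(0,x,v)$. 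That upper-semicontinuity holds when the ratio tends to $0$ (Lemma \ref{sec:bounds_and_continuity}; this is the case delegated to \cite{MR4444572}), but it is false when the ratio stays bounded away from $0$ without blowing up: the value jumps by the concentrated braking cost, as the explicit formula \eqref{eq:tilde_phi_2} already shows in dimension one (with $\ell=g=0$, $u(0,x_i,v_i)=\frac 29 v_i^3/(1-x_i)\to \frac{2c}{9}>0=u(0,1,0)$ when the ratio tends to $c$). Hence sequences with $c\le{\rm ratio}\le r$ must also go through the refined argument; the correct split is ratio $\to 0$ versus ratio $\ge c>0$, not bounded versus $\to+\infty$.

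Second, in the singular regime your lower bound is not justified, and this is exactly where convexity — which you never use — is needed. Proposition \ref{lemma:ass_infimum}(ii) bounds $\frac 12\int_0^{\bar t_i}|(\rho^i)''|^2$ from below with $\rho^i=d(\xi^i)$, and passing from $(\rho^i)''=\nabla d(\xi^i)\cdot(\xi^i)''+D^2d(\xi^i)\bigl((\xi^i)',(\xi^i)'\bigr)$ to $\frac 12\int_0^{\bar t_i}|(\xi^i)''|^2$ costs an error of order $\int_0^{\bar t_i}|(\rho^i)''|\le \bigl(\bar t_i\int_0^{\bar t_i}|(\rho^i)''|^2\bigr)^{1/2}$, i.e. of order $(\bar t_i\times\hbox{braking cost})^{1/2}$. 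In your regime the braking cost tends to $+\infty$, your time $\bar t_i$ (first zero of the normal velocity) need not exist at all (the normal velocity may stay positive and tiny, as in Case 2 of the one-dimensional proof), and when it exists it need not tend to $0$ (it only converges to the time at which $\xi$ leaves $\partial\Omega$); so this error is not $o(1)$, the matching of your two bounds fails, and you lose the uniform bound on $\int_{\bar t_i}^T|(\xi^i)''|^2$ that the weak-compactness step requires. The paper removes the curvature term exactly, not asymptotically: by convexity, $\overline\Omega$ lies in the half-space $\{z:(z-x^i)\cdot\nabla d(x^i)\le -d(x^i)\}$, so one constrains the scalar $s\mapsto(\xi^i(s)-x^i)\cdot\nabla d(x^i)+d(x^i)$, whose second derivative is precisely $\nabla d(x^i)\cdot(\xi^i)''$, and the threshold is taken to be $\epsilon^i=|d(x^i)|/(v^i\cdot\nabla d(x^i))>0$ rather than $0$, which guarantees both the existence of the hitting time $\tau^i$ and $\tau^i\le v^i\cdot\nabla d(x^i)\to 0$; this is also why the sharp estimate \eqref{eq:20} is invoked. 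A further flaw on the upper-bound side: forcing the competitor to reach $(x,v)$ exactly by a polynomial interpolation over a time of length $\bar t_i\sim 3|d(x^i)|/(v^i\cdot\nabla d(x^i))$ has acceleration cost of order $\delta_i^2/\bar t_i^3$, with $\delta_i$ the tangential gap, and since the rate of convergence of $(x^i,v^i)$ to $(x,v)$ is unrelated to $\bar t_i$ this is not $o(1)$; the paper instead lets the competitor land at a nearby admissible boundary state and uses the continuity of $u$ provided by Lemma \ref{sec:bounds_and_continuity}.
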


\begin{proof}
  If $x\in \Omega$ or if $x\in \partial \Omega$ and the sequence $(x^i, v^i)_{i\in \N}$ satisfies the further property
  \begin{equation}
  \label{eq:39}
 \lim_{i\to \infty}  \frac {(v^i \cdot \nabla d (x^i) )_+^3} {\left |d(x^i)\right|} = 0,
\end{equation}
with the same convention as in Lemma \ref{sec:bounds_and_continuity},
then the desired result is \cite[Prop. 2.2 and Prop. 3.9]{MR4444572}.

There remains to consider the case when $x\in \partial \Omega$, $v\cdot n(x)=0$ and
\begin{equation}
  \label{eq:40}
  (v^i \cdot \nabla d (x^i))_+^{3} \ge c \left |d(x^i)\right|
\end{equation}
for a given positive constant $c$.

Let $V_x,\, R_x,\, \Phi_x, \, \Psi_x$ be as in Lemma \ref{def:local_charts}. Because $\xi $ is $C^1$ and $\xi^i$ tends to $\xi$ in $C^1$, there exits a time $T_1$, $0<T_1\le T$ such that  all the trajectories $\xi$ and $\xi^i$,$i\in \N$ remain in $V_x$ in $[0,T_1]$. Therefore, the trajectories $\widehat{\xi}= \Phi_x\circ \xi$ and $\widehat{\xi^i}=
\Phi_x\circ \xi^{i}$ are well defined as $C^1$ functions from $[0,T_1]$ to $B_-(0,R_x)$.

Note that
\begin{displaymath}
  \frac {d {\xi}} {dt}(s)= D\Psi_x( \widehat{\xi}(s)) \frac {d \widehat{\xi}} {dt}(s),
\end{displaymath}
and
\begin{equation}
  \label{eq:41}
  \frac {d^2  {\xi}} {dt^2}(s)= D\Psi_x( \widehat{\xi}(s)) \frac {d^2  \widehat{\xi}} {dt^2 }(s)  +  \left( D^2 \Psi_x( \widehat{\xi}(s))  \frac {d \widehat{\xi}} {dt}(s)\right)  \frac {d \widehat{\xi}} {dt}(s) ,
\end{equation}
and that the same formula hold  mutatis mutandis for $\xi^i$.

We  set $\widehat x^i = \Phi_x(x^i)$, $\widehat v^i =  \frac {d \widehat{\xi^i}} {dt}(0)= D\Phi_x( x^i) v^i$, and $\hat x=0$, $\hat v= D\Phi_x(x) v $. Obviously, $\widehat x^i\cdot e_n=d(x^i)$. There also holds that
 $e_n \cdot \widehat v= e_n\cdot D\Phi_x(x) v= n(x)\cdot v=0$ and
\begin{displaymath}
  e_n\cdot \widehat v^i=  e_n\cdot D\Phi_x(x^i) v^i=
  (D\Phi_x(x^i))^T e_n \cdot v_i=\nabla d(x^i) \cdot v^i  = o(1),
\end{displaymath}
because $\nabla d(x^i)=\nabla d(x) +O(|x^i-x|)= n(x)+O(|x^i-x|)$.
Hence, \eqref{eq:40} becomes
\begin{equation}
  \label{eq:42}
   \left(  \widehat v^i \cdot  e_n\right)^{3} \ge c \left |\widehat x^i\cdot e_n \right|.
\end{equation}
In the same manner, $\widehat \xi^i(s)\cdot e_n=d(\xi^i(s))$ and
\begin{displaymath}
  e_n\cdot \frac {d\widehat \xi^i}{dt}(s)=\nabla d(\xi^i(s) )\cdot  \frac {d\xi^i}{dt}(s).
\end{displaymath}

\paragraph{\bf First Step}  Let us build  a competitor for $\xi^i$: recalling that $ \widehat v_i\cdot e_n=v^i \cdot \nabla d (x^i)$, 
set $\theta^i= \frac {3 |d(x^i)|}{ v^i \cdot \nabla d (x^i)}$ which  tends to $0$ as $i\to \infty$,   $\Pi \widehat v_i=   \widehat v_i -  (\widehat v_i\cdot e_n)e_n$,   and construct the real valued function $ s\mapsto \eta^i(s)$ 
as in the proof of Proposition \ref{sec:prop_1} with $T$  replaced with $T_1$, $x=d(x^i)$,   $v= v^i \cdot \nabla d (x^i)=  \widehat v_i\cdot e_n $,  $w=0$,  $\theta=\theta^i$. Set
\begin{displaymath}
\tilde \xi^i (s)=    \Psi_x\left( \widehat x^i +  s\Pi \widehat v_i+   \int_0^s \eta^i(\tau) e_n d\tau\right)  
\end{displaymath}
if $s\le \theta^i$ and extend $\tilde \xi^i $ in $[\theta^i, T]$ by an optimal trajectory for $u\left (\theta^i,    \tilde \xi^i(\theta^i ), \frac {d  \tilde \xi^i} {dt}(\theta^i)\right)$.
Such an optimal trajectory exists  since  $\tilde \xi^i(\theta^i)\in \partial \Omega$ and $\frac {d\tilde \xi_i} {dt}(\theta^i)\cdot n( \tilde \xi^i(\theta^i))=0$, thus $(\xi^i(\theta^i), \frac {d\tilde \xi_i} {dt}(\theta^i))\in \Xi^{\rm ad}$.
Note also that from Lemma \ref{sec:bounds_and_continuity}, $u(\theta_i, \xi^i(\theta^i), \frac {d\tilde \xi_i} {dt}(\theta^i))$ converges to $u(0,x,v)$ as $i\to \infty$.

Set also $\overline \xi^i (s)= \widehat x^i  + s\Pi \widehat v_i+       \int_0^s \eta^i(\tau) e_n d\tau$ if $s\le \theta^i$.
We deduce from Proposition \ref{sec:prop_1} that 
\begin{equation*}
    \ds   \frac 1 2 \int_0^{\theta^i} \left|  \frac {d^2\overline \xi^i}{dt^2}(s)\right|^2  ds = \frac 1 2 \int_0^{\theta^i} \left|e_n \cdot  \frac {d^2\overline \xi^i}{dt^2}(s)\right|^2  ds                                                                                                 = \ds \frac 2 9 \frac {(e_n\cdot \widehat v^i)^3 }{ |e_n\cdot \widehat x^i|}.
\end{equation*}
But
\begin{displaymath}
  \frac {d^2  {\tilde \xi^i}} {dt^2}(s)= D\Psi_x( \overline{\xi}^i(s)) \frac {d^2  \overline{\xi}^i} {dt^2 }(s)  +  \left( D^2 \Psi_x( \overline{\xi}^i(s))  \frac {d \overline{\xi}^i} {dt}(s)\right)  \frac {d \overline{\xi}^i} {dt}(s).
\end{displaymath}
Note that the second term in the right-hand side is bounded and that
\begin{eqnarray*}
   D\Psi_x( \overline{\xi}^i(s)) \frac {d^2  \overline{\xi}^i} {dt^2 }(s)  &=&  \left(D\Psi_x(\widehat x^i + s\Pi \widehat v_i ) + O(\theta^i |e_n\cdot \widehat v^i| ) \right)\frac {d^2  \overline{\xi}^i} {dt^2 }(s)\\  &=&  \left(e_n\cdot\frac { d^2  \overline{\xi}^i} {dt^2 }(s)\right) \left(D\Psi_x(\widehat x^i +s\Pi \widehat v_i) e_n + O(|d(x^i)|) \right)  .
\end{eqnarray*}
Combining all the observation above leads to
  \begin{eqnarray*}
  \ds   \frac 1 2 \int_0^{\theta^i} \left|  \frac {d^2\tilde \xi^i}{dt^2}(s)\right|^2  ds &= & \ds \frac 2 9    \left(|D\Psi_x(\widehat x^i+ s\Pi \widehat v_i) e_n|^2 + O(|d({x}^i)|) \right)  \frac {(e_n\cdot \widehat v^i)^3 }{ |e_n\cdot \widehat x^i|} + o(1)\\ & =&
      \ds \frac 2 9    |D\Psi_x(\widehat x^i+ s\Pi \widehat v_i) e_n|^2  
          \frac {( v^i \cdot \nabla d (x^i))^3 }{ |d(x^i)|} + o(1) .
  \end{eqnarray*} 
  But from \eqref{eq:37},  $|D\Psi_x(\widehat x^i+ s\Pi \widehat v_i) e_n|= 1$,
  therefore
  \begin{equation}\label{eq:43}
       \ds   \frac 1 2 \int_0^{\theta^i} \left|  \frac {d^2\tilde \xi^i}{dt^2}(s)\right|^2  ds
       = \frac 2 9   
          \frac {( v^i \cdot \nabla d (x^i))^3 }{ |d(x^i)|} + o(1) .
  \end{equation}
Hence, 
  \begin{equation}
    \label{eq:44}
    \begin{split}
    u(0,x^i, v^i) \le& \ds  \frac 2 9   
    \frac {( v^i \cdot \nabla d (x^i))^3 }{ |d(x^i)|} + u\left(\theta^i, \tilde \xi^i(\theta^i), \frac {\partial\tilde \xi^i}   {\partial t}(\theta^i)\right)+ o(1)
    \\ =& \ds
 \frac 2 9   
 \frac {( v^i \cdot \nabla d (x^i))^3 }{ |d(x^i)|} + u(0, x, v)+ o(1),
    \end{split}
  \end{equation}
  where the last line is a consequence of  Lemma \ref{sec:bounds_and_continuity}.

  \paragraph{\bf Second Step}

 Since the domain $\Omega$ is convex, $\Omega$ is contained in the  half-space
 $ E= \{ z\in \R^n : (z-x^i)\cdot \nabla d(x^i) < -d(x^i)\} $.

 Set now $\epsilon^i = \frac {|d(x^i)|} {v^i \cdot \nabla d (x^i)}$. From \eqref{eq:40}, it is clear that $ |d(x^i)| \ll \epsilon^i \ll  v^i \cdot \nabla d (x^i)$.

 There  exists $\tau^i\in (0,  v^i \cdot \nabla d (x^i)   )$ such that $\nabla d(x^i)\cdot \frac {d \xi^i}{dt}(\tau^i)= \epsilon^i$, otherwise
 the trajectory $ \xi^i$ would exit $E$ and therefore $\Omega$ in short time.

 From  \eqref{eq:20} in Proposition \ref{lemma:ass_infimum}, point (ii), we deduce that
\begin{equation*}
  \begin{array}[c]{rcl}
    \ds  \frac 1 2 \int_0^{\tau^i} \left| \frac {d^2 \xi^i}{dt^2}(s)\right|^2  ds  & \ge & \ds 
                                                                                            \frac 1 2 \int_0^{\tau^i} \left|\nabla d(x^i)\cdot \frac {d^2 \xi^i}{dt^2}(s)\right|^2  ds
    \\                                                                                 &\ge& \ds  \frac 2 9 \frac {(v^i\cdot \nabla d(x^i))^3}{|d(x^i)|} 
                                                                                         + O\left(  ( v^i\cdot  \nabla d(x^i))^2  \frac{\epsilon^i}   { |d(x^i)|}\right)
    \\
     &=& \ds  \frac 2 9 \frac {(v^i\cdot \nabla d(x^i))^3}{|d(x^i)|}  +o(1).
  \end{array}
\end{equation*}
Note that we have applied  Proposition \ref{lemma:ass_infimum}, point (ii)  to trajectories
$y\in C^2([0,\tau^i ]; \R_-)$ such that $y(0)= d(x^i)$, $ y'(0)=v^i\cdot  \nabla d(x^i) $, and in particular to $y(s)=(\xi^i(s)-x^i)\cdot \nabla d(x^i) +d(x^i)$. 
Therefore, from the  $C^1$ convergence of $\xi^i$ to $\xi$,
\begin{equation}
  \label{eq:45}
  \begin{split}
    u(0, x^i, v^i) \ge & \ds \frac 2 9   
 \frac {( v^i \cdot \nabla d (x^i))^3 }{ |d(x^i)|}  +\frac 1 2 \int_{\tau^i}^T    \left| \frac {d^2 \xi^i}{dt^2}(s)\right|^2  ds \\ &\ds + \int_0^T \ell( \xi(s), \frac {d \xi}{dt}(s),s ) ds + g(\xi(T),\frac {d \xi}{dt}(T) )   +o(1).
  \end{split}
\end{equation}

 \paragraph{\bf Third step}
 Combining \eqref{eq:45} and   \eqref{eq:44} shows that $\one_{[\tau^i,T]} \frac {d^2 \xi^i}{dt^2}$ is bounded in $L^2(0,T)$.
 But $\one_{[\tau^i,T]}  \frac {d^2 \xi^i}{dt^2}$ is the weak derivative of  $\one_{[\tau^i,T]}  \frac {d \xi^i}{dt}+  \one_{[0,\tau^i]}  \frac {d \xi^i}{dt} (\tau^i) $, which is itself the weak derivative of $ \kappa^i   (s)=  \one_{[\tau^i,T]}(s)   \xi^i(s) +\one_{[0,\tau^i)}(s)  (\xi_i(\tau^i) +   \frac {d \xi^i}{dt} (\tau^i) (s-\tau^i)   )$.
Therefore, up to  the extraction a subsequence, we may assume that $\kappa^i$ converges weakly in $W^{2,2}(0,T)$. The weak limit can be nothing  but $\xi$. Hence $\xi\in   W^{2,2}(0,T)$. 

Moreover
\begin{equation}
    \label{eq:46}
\int_{0}^T | \frac {d^2 \xi}{dt^2}(s)|^2 ds  \le \liminf_{i\to \infty}  \int_{0}^T | \frac {d^2 \kappa^i}{dt^2}|^2 ds=   \liminf_{i\to \infty}  \int_{\tau^i}^T |\frac {d^2 \xi^i}{dt^2}(s)|^2 ds.  
\end{equation}
Finally, we deduce from  \eqref{eq:46},  \eqref{eq:45} and   \eqref{eq:44} that
\begin{equation}
   \label{eq:47}
   \frac 1 2   \int_{0}^T |\frac {d^2 \xi}{dt^2}(s)|^2 ds + \int_0^T \ell(\xi(s),\frac {d \xi}{dt}(s),s ) ds + g(\xi(T),\frac {d \xi}{dt}(T) )
\le u(0,x,v),
\end{equation}
which implies that  $ \xi\in \Gamma^{\rm{opt}}[x, v]$.
\end{proof}

\begin{remark}
  The assumption on the convexity of $\Omega$ has been used in the second step of the proof,
  where a sharp bound from below for $u(0,x^i, v^i)$ is obtained, and has allowed us to rely on the results obtained in the one dimensional case. Without this assumption, it seems that
  to bound $u(0,x^i, v^i)$ from below, one should also take into account  the tangential component of the  velocity
  (Coriolis effect)
  and the curvature of $\partial \Omega$, and it  is not clear how to obtain the the desired result.
\end{remark}
 Recall that $u$ is a viscosity solution of \eqref{eq:1000} in $\Omega\times \R^n\times (0,T)$
(with $p=2$) and satisfies \eqref{eq:1001}. It is also clear that $u(x,v,t)=+\infty$ if $t<T$, $x\in \partial \Omega$ and $n(x)\cdot v>0$.
Even if we shall not discuss this aspect, we also expect that $u$ be a supersolution of \eqref{eq:1000} at $(x,v,t)$ such that  $t<T$, $x\in \partial \Omega$ and $n(x)\cdot v<0$. The function $u$ is discontinuous at  $(x,v,t)$ such that  $t<T$, $x\in \partial \Omega$ and $n(x)\cdot v=0$. Proposition  \ref{sec:finite-horizon-state-1_b} that follows contains information on the singularities of $u$ at such points.
\begin{proposition}
  \label{sec:finite-horizon-state-1_b}
   \begin{description}[style=unboxed,leftmargin=0cm]
  \item[1.]   For a fixed time $t$,  $0\le t<T$,  $(y,w)\in \Xi^{\rm{ad}}$,
    \begin{displaymath}
 u(t, y,w)\sim \frac {2} {9}   \frac { (w\cdot \nabla d(y) )_+ ^{3}} { |d(y)| }   
    \end{displaymath}
    as $d(y)\to 0$  and   $  \frac { (w\cdot  \nabla d(y))_+ ^{3}} { |d(y)| }    \to \infty$ 
(note that this may occur even if $(w\cdot  \nabla d(y))_+ \to 0$ 
and that this implies $\frac {(w\cdot  \nabla d(y))_+ (T-t)}{ |d(y)| }\to +\infty$).
\item[2.]  For a fixed time $t$, $0\le t<T$, and a fixed constant $C$, consider  $(x,v)$ such that $x\in \partial \Omega$, $v\cdot n(x)=0$. 
  If $y\to x$, $w\to v$,  $   \frac { (w\cdot  \nabla d(y))_+ ^{3}} { |d(y)| } < C$, and  $\frac {(w\cdot  \nabla d(y))_+ (T-t)}{ |d(y)| }\to +\infty$
(note that the latter assumption  holds if $  c< \frac { (w\cdot  \nabla d(y))_+ ^{3}} { |d(y)| } < C$ for a given positive constant $c<C$),
then
 \begin{displaymath}
   u(t, y,w)\le  \frac {2} {9}   \frac { (w\cdot \nabla d(y) )_+ ^{3}} { |d(y)| }   
 +u(t,x,v) +o(1),    
   \end{displaymath}
where $o(1)$ is a  quantity that tends to $0$.
\end{description}
\end{proposition}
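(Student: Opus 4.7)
The plan is to adapt the first two steps of the proof of Proposition \ref{prop:clos-graph-prop} in order to produce matching upper and lower bounds on $u(t,y,w)$ in the relevant asymptotic regimes, invoking the sharp estimate \eqref{eq:20} of Proposition \ref{lemma:ass_infimum}(ii) and the local charts of Lemma \ref{def:local_charts}.

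\textbf{Upper bound (both parts).} I would pick $x \in \partial \Omega$ close to $y$: in Part 2, $x$ is the given limit point, while in Part 1 one may take $x = \pi(y)$ (the projection of $y$ onto $\partial \Omega$) and $v = 0$. Working in the chart $(\Phi_x, \Psi_x)$, I set $\theta = 3|d(y)|/(w \cdot \nabla d(y))_+$ and construct a competitor $\tilde \xi$ on $[t, t+\theta]$ exactly as in Step 1 of the proof of Proposition \ref{prop:clos-graph-prop}: the tangential component $\Pi \hat w$ of $\hat w = D\Phi_x(y) w$ is kept constant, while the normal component is driven from $w \cdot \nabla d(y)$ down to $0$ by the one-dimensional profile $\eta$ built from Proposition \ref{sec:prop_1}. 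By construction $\tilde \xi(t+\theta) \in \partial \Omega$ and $\frac{d\tilde\xi}{dt}(t+\theta)$ is tangent to $\partial \Omega$. I then extend $\tilde \xi$ on $[t+\theta, T]$ by an optimal trajectory for its endpoint. The computation carried out for \eqref{eq:43}, which relies on $|D\Psi_x e_n| = 1$ from \eqref{eq:37}, gives that the braking cost equals $\frac{2}{9} \frac{(w \cdot \nabla d(y))_+^3}{|d(y)|} + o(1)$. Under either hypothesis, $\theta \to 0$, the endpoint converges to $(x,v)$, and its normal velocity vanishes identically, so Lemma \ref{sec:bounds_and_continuity} applies and the extended piece costs $u(t,x,v) + o(1)$. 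This yields directly the upper bound of Part 2, as well as the upper bound of Part 1, since $u(t,x,v)$ is bounded while the leading term blows up.

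\textbf{Lower bound (Part 1 only).} I would use the convexity of $\Omega$ exactly as in Step 2 of the proof of Proposition \ref{prop:clos-graph-prop}: any admissible trajectory $\xi \in \Gamma_t[y,w]$ stays in the half-space $\{z : (z-y) \cdot \nabla d(y) \le -d(y)\}$. Setting $\epsilon = |d(y)|/(w \cdot \nabla d(y))_+$, there must exist $\tau \in (t,T)$ such that $\nabla d(y) \cdot \xi'(\tau) = \epsilon$, otherwise the projection of the trajectory onto $\nabla d(y)$ would exit the half-space in short time, violating the state constraint. Applying Proposition \ref{lemma:ass_infimum}(ii) to the scalar function $s \mapsto (\xi(s)-y) \cdot \nabla d(y) + d(y)$ on $[t,\tau]$ with initial velocity $(w \cdot \nabla d(y))_+$ and terminal velocity $\epsilon$ gives $\frac{1}{2}\int_t^\tau |\xi''|^2\, ds \ge \frac{2}{9} \frac{(w \cdot \nabla d(y))_+^3}{|d(y)|} + O\bigl((w \cdot \nabla d(y))_+^2 \epsilon / |d(y)|\bigr)$, with the correction becoming $O((w \cdot \nabla d(y))_+)$, which is negligible compared to the main term under the asymptotic hypothesis of Part 1. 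Combined with the boundedness of $\ell$ and $g$, this yields the matching lower bound.

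\textbf{Main obstacle.} The delicate point is the lower bound: the intermediate velocity level $\epsilon$ must be tuned so that the $O(v^{2}w/|x|)$ correction from \eqref{eq:20} is genuinely of lower order than the leading $\frac{(w \cdot \nabla d(y))_+^3}{|d(y)|}$ term. The choice $\epsilon = |d(y)|/(w \cdot \nabla d(y))_+$ simultaneously guarantees the existence of $\tau$ before the state constraint forces a reflection and keeps the error in the sharp estimate admissible, and this is precisely where the convexity of $\Omega$ is essential, as it reduces the multidimensional lower bound to a one-dimensional estimate covered by Proposition \ref{lemma:ass_infimum}.
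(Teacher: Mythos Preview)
Your proposal is correct and follows essentially the same route that the paper indicates: the upper bound is obtained via the competitor construction of the First Step of the proof of Proposition~\ref{prop:clos-graph-prop} (brake the normal component to zero in the local chart of Lemma~\ref{def:local_charts}, then continue by an optimal trajectory and invoke Lemma~\ref{sec:bounds_and_continuity}), while the lower bound for Part~1 uses the convexity reduction of the Second Step together with Proposition~\ref{lemma:ass_infimum}. The paper's own proof is a one-line pointer to that argument; your write-up is a faithful and more explicit unpacking of it.

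One small remark on Part~1: you invoke Proposition~\ref{lemma:ass_infimum}(ii), but the hypotheses of Part~1 do not force $(w\cdot\nabla d(y))_+\to 0$, so with your choice of $\epsilon$ the quantity $\frac{v^2\epsilon}{|x|}=(w\cdot\nabla d(y))_+$ need not vanish and the $O$-term in \eqref{eq:20} is not automatically negligible. It is cleaner here to appeal to Proposition~\ref{lemma:ass_infimum}(i): your $\epsilon$ does satisfy $\epsilon/(w\cdot\nabla d(y))_+ = |d(y)|/(w\cdot\nabla d(y))_+^2 \to 0$ under the assumptions of Part~1, and the asymptotic equivalence in (i) is all that is required, since the $\ell$ and $g$ contributions are bounded while the leading term blows up.
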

\begin{proof}
  We skip the proof since it is  similar to the first step in the proof of Proposition
  \ref{prop:clos-graph-prop}.
\end{proof}

\section{Application to mean field games with state constraints}
\label{sec:mfg}
We first describe the class of MFGs  that we wish to study,
then address the existence of {\sl relaxed equilibria}.

\paragraph{\bf Setting and definition of relaxed equilibria}

The sets $\Omega$, $\Xi$, $ \Xi^{ \rm{ad}}$, are defined  in Subsection \ref{sec:setting}.
Let $\cP(\Xi)$ be the set of Borel probability measures on $\Xi$.
Let $F, G: \cP(\Xi)\to C_b^0(\Xi;\R)$ be  bounded and continuous maps (the continuity is  with respect to the narrow convergence  in $ \cP(\Xi)$) and $L$ be a continuous and  bounded  function defined on $\Xi \times [0,T]$.

We wish to study MFGs defined as follows:
at equilibrium,  a generic agent
whose state at time $t$ is $(x,v)\in \Xi$ chooses her strategy  by minimizing the cost
\begin{equation*}
J_t(\xi) =\left (
  \begin{array}[c]{l}
 \int_0^T \left(F[m(s)]   (\xi(s),\xi'(s))+  L(\xi(s),\xi'(s),s) +\frac 1 q   \left|\xi''(s) \right|^q \right) ds\\ \ds +  G[ m(T)](\xi(T), \xi(T)) 
  \end{array}
  \right),
\end{equation*}
over the trajectories  $\xi\in \Gamma_t[x,v]$, see \eqref{eq:3},
while  $m(s)\in  \cP(\Xi)$  is  the distribution of  states of the population of agents at time $s$, ($m(0)$ is prescribed). Note that given  $m\in C^0([0,T]; \cP(\Xi))$, the optimal control problem described above can be written as \eqref{eq:6} with $ \ell(x,v,s)= L(x,v,s)   +F[m(s)](x,v)$ and    $g(x,v)= G[m(T)](x,v)$. Thus, all the results in the previous sections can be applied if  $m\in C^0([0,T]; \cP(\Xi))$.

 Let us briefly explain why describing  the present MFG by a system of forward-backward  PDEs brings difficulties.
\begin{itemize}
\item Given $m$, with $\ell$ and $g$ defined immediately above, we expect that the value function of the latter optimal control problem is a viscosity solution of \eqref{eq:1000} in $ (0,T)\times \Omega \times \R^n$,  and satisfies  \eqref{eq:1001}. As said in the introduction, boundary conditions linked to state constraints would be  needed in order to characterize $u$,  but we are not aware of a complete theory on viscosity solutions in the present case.
\item Formally, the state distribution $m$ should be a weak solution of the following continuity equation:
   \begin{eqnarray*}
  \partial_t m +v\cdot D_xm-{\rm div}_v( m |D_v u|^{p-2}  D_vu)=0, \textrm{ in }
 (0,T)\times \Omega \times \R^n,\\
 m(x,v,0)=m_0(x,v),\textrm{ in }\Omega \times \R^n,
   \end{eqnarray*}
    with suitable boundary conditions on $(0,T)\times \partial \Xi$, 
   where $m_0\in \cP(\Xi)$ is the given initial distribution.
   As already said in the introduction, $m$ may develop singularities, especially on $\partial \Xi$, which makes the study of the forward-backward system difficult.
\end{itemize}
For the reasons given above,  we are rather going  to focus on  {\sl  relaxed equilibria} 
that we now describe.

Recall that  $\Gamma=\Gamma_0$ (defined in \eqref{eq:1} with $t=0$)  is a metric space with the distance $d(  \xi, \tilde\xi)= \|\xi-\tilde\xi\|_{  C^1([0,T];\R^n)}$. For $t\in [0,T]$, the evaluation map $e_t:\Gamma\to \Xi$ is defined by $e_t(\xi)=  (\xi(t), \xi'(t)) $ for all $  \xi  \in \Gamma$.

Let $\cP(\Gamma)$ be the set of Borel probability measures on $\Gamma$.
 For any $\mu\in \cP(\Gamma)$, let the Borel probability measure $m^\mu(t)$   on $ \Xi$ be defined by $m^\mu(t)=e_t\sharp \mu$. 
It is possible to prove that if $\mu\in \cP(\Gamma)$, then  $t\mapsto m^\mu(t)$ is continuous from $[0,T]$ to  $\cP(\Xi)$, for the narrow convergence  in $ \cP(\Xi)$.

For $m_0\in \cP(\Xi)$, let us define $ \cP_{m_0}(\Gamma)=\{\mu\in \cP(\Gamma): \;  e_0\sharp \mu =m_0\}$.  

With $\mu\in \cP(\Gamma)$, we associate the cost
\begin{equation}
\label{costMFG}
J^\mu( \xi  )
=\left (
  \begin{array}[c]{l}
\ds \int_0^T \left(F[m^\mu(s)]   (\xi(s),\xi'(s))+  L(\xi(s),\xi'(s),s) +\frac 1 q   \left|\xi''(s) \right|^q \right) ds\\ \ds +  G[ m^\mu(T)](\xi(T), \xi(T))    
  \end{array}
\right).
\end{equation}
Then,  $\Gamma^{\mu,\rm{opt}}[x,v]$ is the set of all $ \xi \in  \Gamma[x,v]$ such that $
\xi \in W^{2,q}(0,T, \R^n) $ and
$\xi$ achieves the minimum of $J^\mu$ in $\Gamma[x,v]$.

Then, the abstract definition of {\sl relaxed mean field equilibria} is identical  to that given in \cite{MR3888967} (recall that \cite{MR3888967} dealt with a different class of
state constrained MFGs with  locally controllable dynamics):
\begin{definition}
  \label{sec:setting-notation-3}
The probability measure $\mu \in \cP_{m_0}(\Gamma)$ is a relaxed constrained mean field equilibrium associated with the initial distribution $m_0$ if 
\begin{equation}\label{eq:49}
  {\rm{supp}}(\mu)\subset \mathop \bigcup_{
(x,v)\in   {\rm{supp}}(m_0)      }\Gamma^{\mu,{\rm opt}}[x,v].
\end{equation}
\end{definition}

\paragraph{\bf Existence of relaxed equilibria}
\begin{theorem}
  \label{sec:mean-field-games-5}
 Consider the following cases:
\begin{description}[style=unboxed,leftmargin=0cm]
\item {\bf Case 1:} $n=1$, $q>1$,  $\Omega=(-1,1)$
\item {\bf Case 2:} $n>1$, $q=2$,  $\Omega $ is a bounded and convex domain  with a $C^3$ boundary,
\end{description}
and $L$, $F$ and $G$ satisfying the assumptions in the paragraph above.
  
Let $m_0$ be a probability measure on $\Xi$ such that 
\begin{equation}
  \label{eq:50}
m_0(\Xi \setminus \Xi^{ \rm{ad}} )=0,
\end{equation}
where $ \Xi^{ \rm{ad}}$ is defined in   \eqref{eq:5}, (see also \eqref{eq:7} in Case 1).

Then there exists a relaxed constrained mean field  equilibrium  associated with the initial distribution $m_0$, i.e.  a probability measure $\mu \in \cP_{m_0}(\Gamma)$ such that  (\ref{eq:49}) holds.
\end{theorem}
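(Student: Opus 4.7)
The plan is to apply the Kakutani-Fan-Glicksberg fixed point theorem to a set-valued map on $\cP_{m_0}(\Gamma)$, following the strategy of \cite{MR3888967, MR4444572}, but invoking as a black box the improved closed graph properties of the optimal-trajectory multimap (Proposition \ref{prop_closed_graph_1d} in Case 1, Proposition \ref{prop:clos-graph-prop} in Case 2). To $\mu \in \cP_{m_0}(\Gamma)$ I associate the best-response set
\[
\Psi(\mu) = \left\{ \nu \in \cP_{m_0}(\Gamma) : \mathrm{supp}(\nu) \subset \bigcup_{(x,v)\in\mathrm{supp}(m_0)} \Gamma^{\mu,\rm opt}[x,v] \right\},
\]
where $\Gamma^{\mu,\rm opt}[x,v]$ denotes the set of minimizers of $J^\mu$ over $\Gamma[x,v]$. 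A fixed point $\mu \in \Psi(\mu)$ is exactly a relaxed constrained equilibrium in the sense of Definition \ref{sec:setting-notation-3}. Convexity of $\Psi(\mu)$ is immediate, and nonemptyness follows from a Borel measurable selection of the closed-valued multimap $(x,v) \mapsto \Gamma^{\mu,\rm opt}[x,v]$ over $\mathrm{supp}(m_0) \subset \Xi^{\rm ad}$, the measurability of the graph being a consequence of the closed graph property.

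The second step is to exhibit a convex compact invariant set for $\Psi$. Since $L$, $F$ and $G$ are uniformly bounded independently of $\mu$, the constant $M$ controlling the effective running and terminal costs in Assumption \ref{sec:optim-contr-probl-2} is $\mu$-independent. Using that any $(x,v) \in \Xi^{\rm ad}$ with $x \in \partial \Omega$ satisfies $(v \cdot \nabla d(x))_+ = 0$, one has $\Xi^{\rm ad} = \bigcup_{r>0} \Theta_r$ with $\Theta_r$ defined in \eqref{eq:9}, hence $m_0(\Theta_r) \to 1$ as $r \to +\infty$. For each $r$, Proposition \ref{sec:bounds_opt_traj_nd} provides a $\mu$-independent constant $C_r = C(r,M)$ such that $\Gamma^{\mu,\rm opt}[x,v] \subset \Gamma_{C_r}$ for $(x,v) \in \Theta_r$, and $\Gamma_{C_r}$ is compact in $\Gamma$ by the Arzel\`a-Ascoli theorem. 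Setting
\[
K = \left\{ \nu \in \cP_{m_0}(\Gamma) : \nu(\Gamma \setminus \Gamma_{C_r}) \le m_0(\Xi^{\rm ad} \setminus \Theta_r) \ \text{for every} \ r > 0 \right\},
\]
one checks that $K$ is convex, narrowly closed, and tight, hence narrowly compact by Prokhorov, and the bound on optimal trajectories gives $\Psi(K) \subset K$.

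The decisive step is upper semicontinuity of $\Psi$ on $K$. Suppose $\mu_n \to \mu$ narrowly in $K$ and $\nu_n \in \Psi(\mu_n)$ with $\nu_n \to \nu$ narrowly. Narrow convergence $\mu_n \to \mu$ propagates to narrow convergence $m^{\mu_n}(s) \to m^\mu(s)$ for every $s \in [0,T]$, and the continuity hypotheses on $F$ and $G$ then yield $F[m^{\mu_n}(s)] \to F[m^\mu(s)]$ and $G[m^{\mu_n}(T)] \to G[m^\mu(T)]$ uniformly on $\Xi$. To pass the support condition to the limit, fix a trajectory $\xi \in \mathrm{supp}(\nu)$; since $\nu$ is concentrated on $\bigcup_r \Gamma_{C_r}$, a diagonal extraction produces a sequence $\xi_n \in \mathrm{supp}(\nu_n) \cap \Gamma_{C_r}$ with $(\xi_n, \xi_n') \to (\xi, \xi')$ uniformly, where each $\xi_n$ is a minimizer of $J^{\mu_n}$ with initial data in $\mathrm{supp}(m_0)$. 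The uniform convergence of the effective costs combined with Proposition \ref{prop_closed_graph_1d} (in Case 1) or Proposition \ref{prop:clos-graph-prop} (in Case 2), applied with running cost $L + F[m^\mu(\cdot)]$ and terminal cost $G[m^\mu(T)]$, then gives $\xi \in \Gamma^{\mu,\rm opt}[\xi(0),\xi'(0)]$, hence $\nu \in \Psi(\mu)$. Kakutani-Fan-Glicksberg applied to $\Psi : K \rightrightarrows K$ yields the desired fixed point.

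The main obstacle is precisely this closed graph step: as $(\xi_n(0), \xi_n'(0))$ approaches a singular boundary point of $\Xi^{\rm ad}$ where $u$ is itself singular, the $W^{2,q}$ norm of the optimal trajectory $\xi_n$ may blow up, which rules out any naive stability argument based on weak compactness in $W^{2,q}$ and lower semicontinuity of the cost. It is exactly this difficulty that the refined closed graph theorems of Sections \ref{sec:sc_pb} and \ref{sec:multid} were designed to overcome, and with those tools in hand the remaining arguments are an adaptation of the scheme already used in \cite{MR3888967, MR4444572}.
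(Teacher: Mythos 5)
Your overall strategy coincides with the paper's: Kakutani's fixed point theorem combined with the uniform bounds of Proposition \ref{sec:bounds_opt_traj_nd} and the closed graph properties of Proposition \ref{prop_closed_graph_1d} (Case 1) or Proposition \ref{prop:clos-graph-prop} (Case 2). The organization differs: the paper first proves existence when $m_0$ is supported in a single set $\Theta_r$, so that all optimal trajectories lie in one compact set $\Gamma_{C_r}$, and then handles a general $m_0$ by approximating it with measures $m_{0,n}$ supported in $\Theta_n$, proving tightness of the corresponding equilibria and passing to the narrow limit. Your single application of Kakutani on the tail-constrained set $K=\bigl\{\nu\in\cP_{m_0}(\Gamma):\ \nu(\Gamma\setminus\Gamma_{C_r})\le m_0(\Xi^{\rm ad}\setminus\Theta_r)\ \hbox{for all } r>0\bigr\}$ is an attractive way of merging the two steps; the verifications you sketch (convexity, narrow closedness, tightness via $\Xi^{\rm ad}=\bigcup_r\Theta_r$, invariance $\Psi(K)\subset K$, nonemptiness of $\Psi(\mu)$ by measurable selection) are sound and would indeed remove the paper's approximation step.

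There is, however, a genuine gap at the decisive point, the upper semicontinuity of $\Psi$. Propositions \ref{prop_closed_graph_1d} and \ref{prop:clos-graph-prop} are statements about sequences $\xi_n\in\Gamma^{\rm opt}[x_n,v_n]$ that are exact minimizers of one fixed functional (fixed $\ell$ and $g$). In your argument each $\xi_n$ minimizes $J^{\mu_n}$ with $\mu_n$ varying; the uniform convergence of the effective costs only makes $\xi_n$ an $\epsilon_n$-minimizer of $J^{\mu}$ with $\epsilon_n\to0$, and the propositions as stated say nothing about approximate minimizers, so ``applying them with running cost $L+F[m^\mu(\cdot)]$ and terminal cost $G[m^\mu(T)]$'' is not licensed by their statements. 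This is precisely the point the paper does not get for free either: its proof invokes \cite[Prop. 4.10]{MR4444572}, which is the generalization of the closed graph property to sequences $\xi_i\in\Gamma^{\mu_i,\rm opt}[x_i,v_i]$ with $\mu_i$ converging narrowly to $\mu$. The gap is fixable, because the proofs of Propositions \ref{prop_closed_graph_1d} and \ref{prop:clos-graph-prop} use optimality only through inequalities such as \eqref{eq:26}, \eqref{eq:33}, \eqref{eq:44} and \eqref{eq:45}, which tolerate $o(1)$ perturbations of the cost; but this stability must be stated and proved (or the cited generalization invoked), since it is the crux of the existence proof rather than a routine consequence of the fixed-cost statements. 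A smaller point of the same kind: the asserted uniformity in $s$ of $F[m^{\mu_n}(s)]\to F[m^\mu(s)]$ deserves justification; for the cost convergence you actually need, pointwise-in-$s$ narrow convergence of $m^{\mu_n}(s)$ together with dominated convergence in $s$ suffices.
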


\begin{proof}
  The proof mostly relies on
  \begin{itemize}
  \item   Propositions  \ref{sec:bounds_opt_traj_nd} and \ref{prop_closed_graph_1d} in
Case 1
  \item  Propositions  \ref{sec:bounds_opt_traj_nd} with $q=2$ and \ref{prop:clos-graph-prop} in  Case 2.
  \end{itemize}
It follows  all the steps in \cite[Section 4.3]{MR4444572}. Therefore, for brevity, we just sketch it and refer to  \cite[Section 4.3]{MR4444572} for the details.  It mainly consists of two steps:
  \begin{description}[style=unboxed,leftmargin=0cm]
  \item[1.]   The first step consists of proving the result when $m_0$ is supported in $\Theta_r$ for a given $r>0$, where $\Theta_r$ is defined in \eqref{eq:9}  (see also \eqref{eq:12} in Case 1), by means of Kakutani's fixed point theorem.  This step requires to use  both
  Propositions  \ref{sec:bounds_opt_traj_nd} and \ref{prop_closed_graph_1d} in
Case 1, and both  Propositions  \ref{sec:bounds_opt_traj_nd} and \ref{prop:clos-graph-prop} in Case 2.
    \item[2.] The second step in the proof consists  of approximating the measure $m_0$ by a sequence of probability measures $(m_{0,n})_{n>0}$, such that $ m_{0,n} $ is 
supported in $\Theta_n$.   Hence, from  the first step, we obtain a sequence of relaxed  equilibria $\mu_n$ 
related to $m_{0,n}$, which can be proved to be tight in $\cP(\Gamma)$. A relaxed mean field equilibrium $\mu$ related to $m_0$ is then  obtained as a cluster point of the sequence $\mu_n$  for the narrow convergence in $\cP(\Gamma)$. The assumption on $m_0$ implies that $\mu\in \cP_{m_0}(\Gamma)$. 
The fact that $ {\rm{supp}}(\mu) \subset \bigcup_{(x,v)\in {\rm{supp}}(m_0)}  \Gamma^{\mu, {\rm{opt}}}[x,v]$ is obtained by a key argument, see \cite[Prop. 4.10]{MR4444572},
which is a generalization of either Proposition \ref{prop_closed_graph_1d} in Case 1 or
Proposition  \ref{prop:clos-graph-prop} in Case 2,
to sequences  $\xi_i\in  \Gamma^{\mu_i, {\rm{opt}}}[x_i,v_i]$ such that 
$\mu_i $ narrowly converges to $\mu$ and $\xi_i$ converges to $\xi$ in $C^1$.
\end{description}
\end{proof}

\begin{remark}[In Case 1]
  Compared to \cite[Th. 2.8]{MR4444572}, we have made two improvements:
  \begin{enumerate}
  \item We have got rid of an  assumption made on $\ell$ in \cite{MR4444572}  which essentially meant  that the cost $\ell$ did not reward the trajectories that exit the domain
  \item The present theorem holds for any $q$, $1<q<\infty$, while \cite[Th. 2.8]{MR4444572} was proved only for $q=2$. 
  \end{enumerate}
\end{remark}

\begin{remark}[In Case 2]
  Compared to \cite[Th. 3.6]{MR4444572}, the present result is more general because
  we have got rid of the assumption that the support of $m_0$ is  contained in a compact set $\Theta$ that satisfies the assumption of Lemma \ref{sec:bounds_and_continuity}.

  The result is limited to $q=2$, because it relies on Proposition \ref{prop:clos-graph-prop} and point (ii) in Proposition \ref{lemma:ass_infimum}.
\end{remark}

\appendix

\section{Proofs of results contained in Subsection \ref{sec:auxil_pb}}
Before proving Proposition  \ref{sec:prop_1}, let us give a useful technical lemma:
 \begin{lemma}
\label{lem_2}
  Consider  $q\in (1,+\infty)$.
    \begin{enumerate}
    \item     The function $f$ defined in \eqref{eq:13} is continuous and increasing, smooth on $(1,+\infty)$ and 
is  a bijection from $[1,+\infty)$ onto $[-1,-1/2)$ with an increasing and continuous inverse.
\item The derivative of $f$ is defined on $(1,+\infty)$ by 
  \begin{equation}
    \label{eq:52}
    f'(y) =
    \frac { (q-1)^2 \left( y^{\frac q  {q-1} } -(y-1)^{\frac q  {q-1} } \right)^2 -q^2 y^{\frac 1 {q-1}} (y-1)^{\frac 1 {q-1}}} {(q-1)    \left( y^{\frac q  {q-1} } -(y-1)^{\frac q  {q-1} } \right)^2 }
  \end{equation}
 and its extension by  $q-1$ at $y=1$ is continuous on $[1,+\infty)$.
\end{enumerate}
\end{lemma}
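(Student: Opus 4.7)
The plan is to first establish the derivative formula \eqref{eq:52} by direct computation, and then control its sign, from which all assertions will follow. To obtain the formula I would set $u(y) = y^{q/(q-1)}$, $v(y) = (y-1)^{q/(q-1)}$, compute $u'=\tfrac{q}{q-1} y^{1/(q-1)}$, $v'=\tfrac{q}{q-1} (y-1)^{1/(q-1)}$, and apply the quotient rule to $u/(u-v)$, exploiting the identity $uv'-u'v = \tfrac{q}{q-1}(y(y-1))^{1/(q-1)}(y-(y-1)) = \tfrac{q}{q-1}(y(y-1))^{1/(q-1)}$. Adding the contribution of $(q-1)y$ and rearranging then produces \eqref{eq:52}.

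For the continuous extension at $y=1$, since $q/(q-1) > 1$ both $(y-1)^{q/(q-1)}$ and $(y-1)^{1/(q-1)}$ tend to $0$ as $y \to 1^+$; hence $(y^{q/(q-1)}-(y-1)^{q/(q-1)})^2 \to 1$, the subtracted term in the numerator of \eqref{eq:52} vanishes, and $f'(y) \to (q-1)^2/(q-1) = q-1$, as asserted. The same observations give $\lim_{y \to 1^+} f(y) = -q + (q-1) = -1$, which must be the value of $f$ at $y=1$ (correcting what appears to be a typo in \eqref{eq:13}). A Taylor expansion of $(1-1/y)^{q/(q-1)}$ to second order then yields $\lim_{y\to +\infty} f(y) = -1/2$.

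The hard part is showing $f'(y) > 0$ on $(1,+\infty)$. By \eqref{eq:52} this amounts to
\begin{equation*}
(q-1)\bigl(y^{q/(q-1)}-(y-1)^{q/(q-1)}\bigr) > q\, (y(y-1))^{1/(2(q-1))}.
\end{equation*}
Setting $r = q/(q-1)>1$ and $x=y/(y-1)>1$, this reduces to the classical inequality
\begin{equation*}
\frac{x^r-1}{x-1} > r\, x^{(r-1)/2} \quad \text{for } x>1,\ r>1,
\end{equation*}
which compares a divided difference of $t\mapsto t^r$ to the geometric mean of the endpoint derivatives. My plan is to prove it via the substitution $x = e^{2t}$, $t>0$: the identities $x^r-1 = 2e^{rt}\sinh(rt)$ and $x-1 = 2e^t \sinh(t)$ simplify the inequality to $\sinh(rt) > r\sinh(t)$ for $r>1$ and $t>0$, which in turn is equivalent to the strict monotonicity of $s \mapsto \sinh(s)/s$ on $(0,+\infty)$. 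The latter is elementary, since the derivative $(s\cosh s-\sinh s)/s^2$ has numerator vanishing at $0$ with derivative $s\sinh s>0$ on $(0,+\infty)$.

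Once $f'>0$ is established, the remaining claims will follow at once: $f$ is smooth and strictly increasing on $(1,+\infty)$, continuous on $[1,+\infty)$ with $f(1)=-1$ and $\lim_{y\to+\infty} f(y) = -1/2$, so $f$ is a strictly monotone bijection from $[1,+\infty)$ onto $[-1,-1/2)$, and its inverse is automatically continuous and increasing by the standard theorem on inverses of monotone continuous bijections of an interval.
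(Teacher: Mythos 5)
Your proposal is correct, and the overall skeleton (compute $f'$, show it is positive, identify the limits at $1$ and $+\infty$, conclude the bijection and the continuity of the inverse) matches the paper's. Your observation that the value $f(1)=1$ in \eqref{eq:13} must be read as $-1$ is also right: the limit of $f$ as $y\to 1_+$ is $-q+(q-1)=-1$, which is what the range $[-1,-1/2)$ in the lemma requires. The genuine difference is in the only nontrivial step, the sign of $f'$. The paper assumes a critical point $y_\circ$, substitutes $z=(1-1/y_\circ)^{1/(q-1)}\in[0,1)$ to arrive at $(q-1)^2(1-z^q)^2-q^2z(1-z^{q-1})^2=0$, and then asserts, by an unreported study of first and second derivatives, that this expression is in fact positive on $[0,1)$; no critical points plus $f'(1_+)=q-1>0$ then give monotonicity. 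You instead take the square root of the numerator condition, $(q-1)\bigl(y^{q/(q-1)}-(y-1)^{q/(q-1)}\bigr)>q\,(y(y-1))^{1/(2(q-1))}$, and after the substitutions $r=q/(q-1)$, $x=y/(y-1)=e^{2t}$ reduce it to $\sinh(rt)>r\sinh(t)$, i.e.\ to the strict monotonicity of $s\mapsto\sinh(s)/s$, which you prove completely. This buys a self-contained and checkable proof of $f'>0$ (the paper leaves the decisive calculus to the reader), and it exposes the structural content of the inequality: the divided difference of $t\mapsto t^r$ on $[y-1,y]$ dominates the geometric mean of the endpoint derivatives. The paper's route, in exchange, stays entirely within polynomial-type expressions in $z$ and avoids introducing hyperbolic functions, but at the price of an omitted two-derivative study. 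All your intermediate identities (the quotient-rule computation via $uv'-u'v=\tfrac{q}{q-1}(y(y-1))^{1/(q-1)}$, the value $q-1$ of the extension of $f'$ at $1$, and the expansion giving $\lim_{y\to+\infty}f(y)=-1/2$) check out.
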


\begin{proof}[Proof of Lemma \ref{lem_2}]
  It is clear that $f$ continuous in $[1,+\infty)$, smooth in
  $ (1,+\infty)$.  By straightforward calculus, we see that $\lim_{y\to +\infty} f(y)=-1/2$.

  Point 2. also stems from easy calculus.
  
  Let us now check that $f$ is increasing in $[1,+\infty)$. Assume that $y_\circ\in (1,+\infty)$ is a root of $f'$ and set $z= (1-1/y_\circ)^{\frac 1 {q-1}}\in [0,1)$. Standard calculus leads to 
        \begin{equation}
          \label{eq:53}
        (q-1)^2  (1-z^q)^2 -q^2 z (1-z^{q-1})^2=0.  
        \end{equation}
        By studying the variations of the function in \eqref{eq:53}  (one needs to compute its first and second derivatives), it can be seen that it takes positive values in $[0,1)$. Thus, $f$ has no critical points in $(1,+\infty)$. Combining this fact with the first observation implies that $f$  is increasing.   Therefore, $f$ is a bijection from $[1,+\infty)$ onto $[-1,-1/2)$ and has  an increasing and continuous inverse.
    \end{proof}

\begin{proof}[Proof of Proposition  \ref{sec:prop_1}]
       The set  $K_{\theta,w}$ is  convex and closed.  We will see below that it is non empty. Hence,  problem  \eqref{eq:15} is the minimization of a coercive, strictly convex and continuous functional over a non-empty, convex and closed set given by linear constraints.
    Thus,  there exists a unique minimizer denoted $\eta_{\theta, w}$.
In the remainder of the proof, we shall omit the indices $\theta$ and $w$ for brevity and simply set $\eta=\eta_{\theta,w}$.

    The Euler-Lagrange first order conditions read as follows: there exists a  real number $\mu$ such that $\eta $ is a weak solution of the variational inequality
   \begin{equation}
     \label{eq:54}
   \left\{  \begin{array}[c]{rcll}
      -\frac d {dt}\left(  \left|\eta'\right|^{q-2} \eta' \right) &\ge& -\mu, &\hbox{ in } (0, \theta) ,      \\
       \eta &\ge&  w ,  &\hbox{ in }  (0, \theta),  \\  
       ( -\frac d {dt}\left(  \left|\eta'\right|^{q-2} \eta' \right)+ \mu)( \eta- w) &=&0,   &\hbox{ in } (0, \theta) ,      \\
\ds         x+\int_0^\theta \eta(s) ds &\le& 0, \\\mu&\ge& 0,\\
       \ds     \mu\left( x+\int_0^\theta \eta(s) ds\right) &=&0,\\
       \eta(0)&=&v, \quad \hbox{and} \quad        \eta(\theta)=w.
     \end{array}     \right.
 \end{equation}
 \begin{description}[style=unboxed,leftmargin=0cm]
  \item[1.] Let us focus on the solutions such that  $\mu >0$ and  $x+\int_0^\theta \eta(s)ds=0$.

   Since $v>w=\eta(0)$ and  $\eta$ is continuous,  we may  define $\bar t\in (0,\theta]$, the minimal time  such that $\eta(\bar t)=w$. Note that $\eta(t)>w$ for all $t\in [0,\bar t)$.
     From the third and fifth lines in  \eqref{eq:54},  $   \frac d {dt}\left(  \left|\eta'\right|^{q-2} \eta' \right) = \mu\ge 0$ in $(0,\bar t)$. Hence, $  \left|\eta'\right|^{q-2} \eta'$ coincides with an affine and nondecreasing function a.e. in $(0,\bar t)$. Since $p \mapsto |p|^{q-2} p$ is a strictly increasing function defined on $\R$, this implies that $\eta'|_{(0,\bar t)}$ can be identified with a continuous and nondecreasing function defined in $[0,\bar t]$. We may thus introduce the parameter $\zeta= \eta'(0)$. We have obtained in particular that  $\eta'\ge \zeta$ in  $[0,\bar t]$.

     We claim that $\zeta$ is negative. Indeed, if $\zeta$ was nonnegative, then, from the previous observation, $\eta$ would be nondecreasing in $[0,\bar t]$, thus  $\eta(\bar t)\ge v>w$,  which would contradict the definition of $\bar t$.

     Since $\zeta<0$, $\eta'$ is negative in some interval containing $0$. With the same argument as above, we see that
 $\eta'$ cannot change sign in $[0,\bar t)$. Hence, $\eta'$ is negative in $  [0,\bar t)$. We deduce that
 $\eta' (t)=  -( (-\zeta)^{q-1} -\mu t)^{\frac 1 {q-1}}$ in $(0, \bar t)$.
  \begin{description}[style=unboxed,leftmargin=0cm]
 \item[a.]  Let us look for the solutions that satisfy the further conditions: $\bar t <\theta$ and  $\eta(s)=w$ for $s\ge \bar t$.
    From the arguments above, $\eta'(\bar t_-)$ is well defined.
If $\eta'(\bar t_-)<0$, then
$ \frac d {dt}\left(  (-\eta') ^{q-1} \right) $ would comprise a Dirac mass at $\bar t$ with a negative coefficient, which would contradict the inequality $ -\frac d {dt}\left(  \left|\eta'\right|^{q-2} \eta' \right) \ge -\mu$. Hence
 $ \eta'(\bar t)=0$. This implies that $\bar t = (-\zeta)^{q-1} / \mu$. We deduce that
 $\eta' (t)= \zeta  (1 - t /{\bar t} )_+^{\frac 1 {q-1}}$ and $\eta(t) = v +
 \frac {q-1}{q} \zeta \bar t \left( 1 -   (1 -  t /{\bar t} )_+^{\frac q {q-1}}\right)$ in $(0, \theta)$.
 Since $\eta(\bar t)=w$, we obtain that $  \bar t =   \frac {q}{q-1}   \frac {w-v}{\zeta} $ and that
 $\eta(t) = w +(v-w)   (1 -  t /{\bar t} )_+^{\frac q {q-1}}$.
 The identity  $x+\int_0^\theta \eta(s)ds=0$ implies
 that $ 0=x+\theta w + \bar t  \frac{(q-1)}{(2q-1)}  (v-w)  $, then
 $\bar t = - \frac {2q-1}{q-1} \frac {x+\theta w}{v-w} $. The inequality  $\bar t < \theta$ holds if and only if
 $ - \frac {2q-1}{q-1} \frac {x+\theta w}{v-w} < \theta$, i.e. $  \theta \in  \left(  \frac {(2q-1)|x|}{(q-1)v+qw}  ,T\right]$.

 We finally obtain that  $\zeta= \frac q {2q-1} \frac { (v-w)^2}{x+\theta w}$ and $\mu =\frac {q-1} q  \frac 1 {v-w} \left( -\frac q {2q-1} \frac { (v-w)^2}{x+\theta w} \right)^q$. 
 This leads to the third  line of \eqref{eq:16}, namely
$
I(\theta,w)=\frac {q^{q-1}} {(2q-1)^q}   \frac {(w-v)^{2q-1}} { (|x|-\theta w)^{q-1}}$.
 We have found the solution of  \eqref{eq:54} and the minimizer of \eqref{eq:15} in the case when  $\theta \in  \left( \frac {(2q-1)|x|}{(q-1)v+qw}  ,T\right]  $.
 
\item[b.]  Let us look for the solutions that satisfy the further condition  $\bar t =\theta$.  We know that for all $t\in [0,\theta]$,
  $
  \eta' (t)= -( (-\zeta)^{q-1} -\mu t)^{\frac 1 {q-1}}$.
  Taking the primitive, we obtain that  for all $t\in [0,\theta]$,
  \begin{equation}
    \label{eq:55}
  \eta(t)= v   -\frac {q-1} q \frac {(-\zeta)^{q}}{\mu} \left(1- \left(1-   \frac \mu {(-\zeta)^{q-1}} t\right)    ^{\frac q {q-1}}\right).
  \end{equation}
The identity  $\eta(\theta)=w$ and \eqref{eq:55} yield
    \begin{equation}\label{eq:56}
      1-\frac \mu {(-\zeta)^{q-1}} \theta = \left(  1+  \frac q {q-1}  (w-v)   \frac{\mu} {(-\zeta)^{q}}    \right)^{\frac{q-1} q}.
    \end{equation}
Next, from the identity  $x+\int_0 ^\theta \eta(t)dt=0$ and \eqref{eq:55}, we see that 
\begin{eqnarray*}
        0=    x+v\theta -\frac {q-1} q \frac {(-\zeta)^{q}}{\mu} \theta
             +\frac {(q-1)^2}{q(2q-1)}\frac {(-\zeta)^{q}}{\mu}
      \frac {(-\zeta)^{q-1}}{\mu}  \left(  1  - \left(   1-\frac \mu {(-\zeta)^{q-1}} \theta     \right) ^{\frac {2q-1}{q-1}}\right)  .
\end{eqnarray*}
      Using \eqref{eq:56}, this becomes
      \begin{equation} \label{eq:57}   
        \begin{array}[c]{rcl}
            0&=&
        x+v\theta -\frac {q-1} q \frac {(-\zeta)^{q}}{\mu} \theta
        +\frac {(q-1)^2 } {q(2q-1)}   \frac {(-\zeta)^{q}}{\mu}  \frac {(-\zeta)^{q-1}}{\mu}\\
             && -\frac {(q-1)^2 } {q(2q-1)}   \left(   \frac {(-\zeta)^{q-1}}{\mu} -\theta  \right) \left( \frac {(-\zeta)^{q}}{\mu} +\frac q {q-1} (w-v) \right) 
          \\
        &=&  x+\theta\left(\frac  q {2q-1} v +\frac {q-1}{2q-1} w \right)
          -\frac {q-1}{2q-1}   \frac {(-\zeta)^{q}}{\mu} \theta
           +\frac {q-1}{2q-1}(v-w)   \frac {(-\zeta)^{q-1}}{\mu}.
         \end{array}
       \end{equation}
     After changing the unknowns from $(\zeta, \mu)$ to $(X, Y)$, where
      \begin{equation}
        \label{eq:58}
        X=  \frac {(-\zeta)^{q-1}}{\mu \theta}\quad  \hbox{and} \quad Y=  \frac {(-\zeta)^{q}}{\mu},
      \end{equation}
 we deduce from  \eqref{eq:57}  and  \eqref{eq:56}  the following system
 \begin{eqnarray}
   \label{eq:59}
   \frac {q-1}{2q-1}(v-w) X -\frac {q-1} {2q-1} Y
   &=&  -\frac x \theta -   \frac {q-1}{2q-1} w - \frac q {2q-1} v,\\
   \label{eq:60}
   Y&=&\frac {q}{q-1} (v-w) \frac 1 { 1-(1-1/X)^{\frac q {q-1}}},
 \end{eqnarray}
 and after eliminating $Y$, we find that $X$ must be a solution of \eqref{eq:17}.

 Observe that $-\frac 1{v-w} \left((2q-1)\frac x \theta +(q-1)w +qv\right) $ belongs to $ [-1,-1/2)$ if and only if $ \theta\in \left   ( \frac {2|x|}{v+w},  \frac {(2q-1)|x|}{(q-1)v+qw} \right]$.  We deduce from Lemma \ref{lem_2} that \eqref{eq:17}  has a unique solution (that  depends continuously on the right-hand side) if and only if
 $ \theta\in \left   ( \frac {2|x|}{v+w},  \frac {(2q-1)|x|}{(q-1)v+qw} \right]$.
 Under the latter condition,  \eqref{eq:58} and \eqref{eq:60} imply 
   \begin{equation}
     \label{eq:61}
     \mu^{\frac q {q-1}}  =\frac {Y^{q}}{X^{\frac {q^2} {q-1}} \theta^{\frac {q^2} {q-1}}}
     = \left(\frac {q}{q-1}\right)^q \frac {(v-w)^q} { \theta^{\frac {q^2} {q-1}}}  \left(X^{\frac q {q-1}}-(X-1)^{\frac q {q-1}}\right)^{-q}.
   \end{equation}
  Then,  $I(\theta, w)$  can be expressed in terms of $X$  (the solution of  \eqref{eq:17})  and $\mu$ (given by  \eqref{eq:61}) as follows:
   \begin{equation*}
     \begin{split}
       I(\theta, w)= \frac 1 q \int_0 ^\theta \left( (-\zeta)^{q-1} -\mu s\right)^{\frac q {q-1}} ds
       &= \frac 1 q \frac {q-1}{2q-1} \left(  \frac {  (-\zeta)^{2q-1}} \mu  -      \frac { \left( (-\zeta)^{q-1}-\mu \theta \right)^{\frac {2q-1}{q-1 }}} \mu \right) \\
       &= \frac 1 q \frac {q-1}{2q-1} \mu ^{\frac q {q-1}} \theta^{\frac {2q-1}{q-1}}
 \left( X^{\frac {2q-1}{q-1}} -(X-1)^{\frac {2q-1}{q-1}}\right)  .   \end{split}
\end{equation*}
Plugging  \eqref{eq:61} yields 
\begin{equation}
  \label{eq:62}
     I(\theta, w)=\frac 1 {2q-1} \left( \frac {q}{q-1}\right)^q (v-w)^q \theta^{-(q-1)} \frac { X^{\frac {2q-1}{q-1}} -(X-1)^{\frac {2q-1}{q-1}} }{ \left( X^{\frac {q}{q-1}} -(X-1)^{\frac {q}{q-1}}\right)^q}.
   \end{equation}
   We have found the solution of  \eqref{eq:54} and the minimizer of \eqref{eq:15} in the case when  $\theta \in
\left( \frac {2|x|}{v+w},  \frac {(2q-1)|x|}{(q-1)v+qw} \right] $, and have obtained the second line of \eqref{eq:16}.

In the special case $q=2$, it was proved in \cite[Lemma 4.4]{MR4444572}  that
$   \eta_{\theta, w} (t)= \ds v + \zeta t  +\frac \mu 2  t ^2 $    with
$ \zeta= -\frac{6x+(4v+2w) \theta}{\theta^2}$ and $\mu =6 \frac {2x+(v+w)\theta} {\theta^3}$,
which leads to \eqref{eq:18}.
    \end{description}
  \item[2.] If  $\frac {|x|} \theta \ge \frac {v+w} 2$, the solution of \eqref{eq:54} is given by $\mu=0$ and $ \eta(t)=  v -  (v-w) \frac t \theta$.
  Then, $ I(\theta,w)=\frac 1 q  \frac {(w-v)^{q}} { \theta^{q-1}}$. We have obtained the first line of \eqref{eq:16}.
   \end{description}

   \medskip
   
   Finally, we deduce from the explicit form of  $I(\theta, w)$ when it is available and from the observation  on the continuity of $X$, that $I$ is continuous on the domain $(0,T]\times [0,v)$. \end{proof}

\begin{proof}[Proof of Lemma \ref{lem_var_w}]
  Recall that $\theta$ is fixed in   $\left (\frac {|x|} v, \frac {(2q-1) |x|}{ (q-1) v}
\right] $.
It is easy to check that $v> \frac { (2q-1)|x|/\theta-(q-1)v } q \ge  \max\left(0, 2\frac{|x|} \theta -v\right)$
and that  $0< \frac { (2q-1) (v -|x|/\theta) }{v} \le q$.
  
  First, we easily deduce from the explicit formulas in the  first and third lines  in \eqref{eq:16} that  $w\mapsto I(\theta, w)$ is decreasing in $\left[0,  \max \left (0, 2\frac{|x|} \theta -v\right)\right]$ and increasing in $\left[ \frac { (2q-1)|x|/\theta-(q-1)v } q ,v\right)$.

  Setting $ J:=  \left(  \max\left( 0, 2\frac{|x|} \theta -v\right),  \frac { (2q-1)|x|/\theta-(q-1)v } q\right)$,
    the main difficulty  occurs when $J\not = \emptyset$. If 
 $ w \in J$, then  $I(\theta, w)$ is given by the second line in \eqref{eq:16}, that reads 
     \[ I(\theta,w)=   \frac { \theta^{-(q-1)}} {2q-1} \left( \frac {q}{q-1}\right)^q   (v-w)^q g(X(w)),\]
where
\begin{equation}
  \label{eq:63}
  g(X)= \frac { X^{\frac {2q-1}{q-1}} -(X-1)^{\frac {2q-1}{q-1}} }{ \left( X^{\frac {q}{q-1}} -(X-1)^{\frac {q}{q-1}}\right)^q}
\end{equation}
and $X(w)$ is the unique solution to \eqref{eq:17}. Also, \eqref{eq:17} reads
$f(X(w))= h(w)$, with
\begin{displaymath}
  h(w) =   -\frac 1{v-w} \left((2q-1)\frac x \theta +(q-1)w +qv\right).
\end{displaymath}
Thus, the desired result will  be obtained if we can prove that $J\ni w\mapsto   (v-w)^q g( X(w)) $ is decreasing, or after differentiating (which is permitted), that
\begin{equation}
  \label{eq:64}
   (v-w)   h'(w)    \frac {g' \left( X(w)\right)} {  f'( X(w)) \; g \left( X(w)\right) }        < q,\quad  \forall w\in J.
\end{equation}
After some calculus left to the reader, one gets that for any $X>1$,
\begin{displaymath}
  \frac {g' (X)} {g (X)}=  
\frac { (q-1)^2 \left( X^{\frac q  {q-1} } -(X-1)^{\frac q  {q-1} } \right)^2
  -q^2 X^{\frac 1 {q-1}} (X-1)^{\frac 1 {q-1}}}
{(q-1)    \left( X^{\frac q  {q-1} } -(X-1)^{\frac q  {q-1} } \right)
  \left( X^{\frac {2q-1}  {q-1} } -(X-1)^{\frac {2q-1}  {q-1} } \right)      },
\end{displaymath}
which, combined with \eqref{eq:52}, gives
\begin{displaymath}
  \frac {g' (X)} {f'(X) g (X)}=   \frac {X^{\frac q  {q-1} } -(X-1)^{\frac q  {q-1}}}{ X^{\frac {2q-1}  {q-1} } -(X-1)^{\frac {2q-1}  {q-1}}}.
\end{displaymath}
After studying the right-hand side in the latter identity for $X\in (1,\infty)$, we deduce that  $ {g' (X)} /  ({f'(X) g (X)})$ takes its values in $(0,1)$ and tends to $1$ as $X\to 1_+$.
On the other hand,  $  (v-w)   h'(w)= \frac { (2q-1) (v -|x|/\theta) }{v-w} \in (0,q)$ for $w\in J$. We  deduce \eqref{eq:64} from the latter two points. Thus $w\mapsto I(\theta,w)$ is decreasing in $J$.

The minimum of $w\mapsto I(\theta,w)$ is achieved at $w_{\min}= \frac { (2q-1)|x|/\theta-(q-1)v } q$. Then  $X(w_{\min})=1$ and  $I(\theta,w_{\min})= \frac { (2q-1)^{q-1}}{q (q-1)^{q-1}} \frac {(v -|x|/\theta)^q}{\theta^{q-1}}$. 
\end{proof}

\begin{proof}[Proof of Proposition \ref{lemma:ass_infimum} ]
  Let us make the change of variables $\tau=\theta v_i/|x_i|$, $z= w_i/v_i$, and define the new function $J_i: \left(0, \frac  {v_i T}{|x_i|}\right] \times [0, 1)\to \R$, $$J_i(\tau, z)= \frac { |x_i|^{q-1}}  {v_i^{2q-1}} I_i\left(    \frac { |x_i|}{v_i}\tau, z v_i\right).$$
We deduce from \eqref{eq:16} that 
  \begin{equation}\label{eq:65}
       J_i(\tau,z)= \left\{  \begin{array}[c]{rl}
         \ds \frac 1 q  \frac {(1-z)^{q}} { \tau^{q-1}} ,
         \quad \quad\quad &\hbox{if}\quad  \tau\in \left
                                                                                (0,  \frac {2}{1+z}\right],
         \\ 
                 \ds \frac {q^{q-1}} {(2q-1)^q}   \frac {(1-z)^{2q-1}} { (1-\tau z)^{q-1}}
 , \quad\quad \quad &\hbox{if}\quad  \tau \in  \left[  \frac {(2q-1)}{(q-1)+qz}  ,\frac {v_iT} {|x_i|} \right).
  \end{array}\right.
\end{equation}
We are only interested in small values of $z$ because, from the assumptions, $|w_i|\ll v_i$, so we can focus on $z\in [0,1/2]$. On the other hand, 
  we know that the minimum of $\theta\mapsto I_i (\theta, w_i)$ is reached in the interval
 $\left[ \frac {2|x_i|}{v_i+w_i},   \frac {(2q-1)|x_i|}{(q-1)v_i+qw_i} \right]$
  which is contained in $
  \left[ \frac {|x_i|}{v_i},   \frac {(2q-1)|x_i|}{(q-1)v_i}\right]$, so we can limit ourselves to studying the restriction of $J_i$ to  $ \left[1,  \frac {2q-1}{q-1}\right]\times [0,1/2]$.

  Recall from the proof of Proposition \ref{sec:prop_1} that
  the map from $[-1,-1/2)$ to $[1,+\infty)$ which maps  the right-hand side of  \eqref{eq:17} to its unique solution $X$  is continuous, and that the right-hand side of  \eqref{eq:17} takes the form  $ -\frac 1{v-w} \left((2q-1)\frac x \theta +(q-1)w +qv\right)$.  Moreover, if $\tau\in (\frac 2 {1+z},\frac {2q-1}{q-1+qz}]$, then   \[ -\frac 1{v_i-w_i} \left((2q-1)\frac {x_i} \theta +(q-1)w +qv_i\right) = \frac {  \frac {2q-1} \tau - (q-1) z -q} {1-z}\in [-1,-1/2).\] We deduce from the latter three points and from \eqref{eq:65} that  $J_i$ is continuous in the  rectangular domain $ \left[1,  \frac {2q-1}{q-1}\right]\times [0,1/2]$,  uniformly  with respect to $i$.

Hence,
$
\lim_{z\to 0} \inf_{\tau } J_i(\tau, z)= \inf_{\tau } J_i(\tau, 0)= \frac {q^{q-1}} {(2q-1)^q}$,
where the latter identity comes from Lemma \ref{lemma:ass_infimum0}.

Finally, because $ \frac  {v_i^{2q-1}}  { |x_i|^{q-1}}$ is bounded from below by a positive constant $c$, we deduce
\eqref{eq:19} for $0\le w_i\ll v_i$. We have proved (i).
In the quadratic case, i.e. $q=2$, the  more accurate result  \eqref{eq:20} can be found by using the explicit formula  \eqref{eq:18}, see \cite[Lemma 4.5]{MR4444572} and its proof.
\end{proof}

 \section*{Acknowledgments}
  The author was  partially supported by the ANR (Agence Nationale de la Recherche) through project ANR-16-CE40-0015-01 and  by the chair Finance and Sustainable Development and FiME Lab (Institut Europlace de Finance).   The author would like to thank Nicoletta Tchou for helpful discussions.

\bibliographystyle{siam}
\bibliography{State_constrained_opt_contr_acc}

\end{document}